%
%
%
%
%

\documentclass[a4paper,reqno]{amsart}
\usepackage{amssymb}
\usepackage[utf8]{inputenc}
\usepackage[T1]{fontenc}
\usepackage{ae,icomma,graphicx}
\usepackage{enumerate,titletoc,setspace}
\usepackage[b]{esvect}
\usepackage[pagebackref,colorlinks,linkcolor=red,citecolor=blue,urlcolor=blue,hypertexnames=true]{hyperref}

\newtheorem{theorem}{Theorem}[subsection]
\newtheorem{lemma}[theorem]{Lemma}

\newtheorem{proposition}[theorem]{Proposition}
\newtheorem{prop}[theorem]{Proposition}
\newtheorem{cor}[theorem]{Corollary}

\newtheorem{corollary}[theorem]{Corollary}

\theoremstyle{definition}
\newtheorem{definition}[theorem]{Definition}

\newtheorem{remark}[theorem]{Remark}
\newtheorem{rem}[theorem]{Remark}
\newtheorem{example}[theorem]{Example}

\newcommand\N{\mathbb N}

\newcommand\R{\mathbb R}

\newcommand\ph\varphi
\newcommand\ps\psi
\newcommand\ep\varepsilon
\newcommand\rh\varrho
\newcommand\al\alpha
\newcommand\be\beta
\newcommand\ga\gamma
\newcommand\om\omega
\newcommand\ta\tau
\renewcommand\th\vartheta
\newcommand\de\delta
\newcommand\ze\zeta
\newcommand\ch\chi
\newcommand\et\eta
\newcommand\io\iota
\newcommand\la\lambda
\newcommand\si\sigma

\newcommand\Ga\Gamma
\newcommand\De\Delta
\newcommand\Th\Theta
\newcommand\La\Lambda
\newcommand\Si\Sigma
\newcommand\Ph\Phi
\newcommand\Ps\Psi
\newcommand\Om\Omega

\newcommand{\x}{{\tt x}}
\newcommand\ac{[\x]}
\newcommand\cx{[\x]}
\newcommand\rx{\R\ac}
\newcommand\RR{\R}
\newcommand\rxlin{\rx_1}
\newcommand\sos{\sum\rx^2}
\newcommand\nn{_{\ge0}}
\newcommand\pos{_{>0}}

\newcommand\sa{S\R^{\al\times\al}}

\newcommand\sapd{S\R^{\al\times\al}_{\succ0}}

\newcommand\sapsd{S\R^{\al\times\al}_{\succeq0}}
\renewcommand\emptyset\varnothing
\newcommand\rad[1]{\sqrt{#1}}
\newcommand\rrad[1]{\sqrt[R]{#1}}
\newcommand\s[1]{s(#1)}

\def\x{{\tt x}}

\def\rx{\R[\x]}
\def\cI{\mathcal I}
\newcommand\vecx[1]{{\vv{[\text{{\tt\normalsize \x}}]}_#1}}

\hfuzz = 5pt

\DeclareMathOperator\Span{span}

\DeclareMathOperator\im{im}
\DeclareMathOperator\ev{ev}
\DeclareMathOperator\dist{dist}
\DeclareMathOperator\supp{supp}
\DeclareMathOperator\tr{tr}

\renewcommand{\setminus}{\smallsetminus}

\newcommand{\eop}{\hfill$\Box$}

\def\Rmm{\R^{m \times m} }

\def\bes{\begin{equation*}}
\def\ees{\end{equation*}}

\def\beq{\begin{equation} }
\def\eeq{\end{equation} }

\def\ben{\begin{enumerate} }
\def\een{\end{enumerate} }

\title[A sums of squares dual for SDP and infeasible LMI]{An exact duality theory for semidefinite programming based on sums of squares}

\author[Igor Klep]{Igor Klep}
\address{Igor Klep, Department of Mathematics, 
The University of Auckland, Private Bag 92019, Auckland 1142, New Zealand}
\email{igor.klep@auckland.ac.nz}
\author[Markus Schweighofer]{Markus Schweighofer}
\address{Markus Schweighofer,
Universit\"at Konstanz,
Fachbereich Mathematik und Statistik,
78457 Konstanz, Allemagne}
\email{markus.schweighofer@uni-konstanz.de}
\subjclass[2010]{Primary 13J30, 90C22, 15A22, Secondary 14P10, 15A48, 15A39}
\date{November 10, 2012}
\thanks{This research was supported through the programme ``Research in Pairs'' (RiP) by the Mathematisches Forschungsinstitut Oberwolfach in 2010.
The first author was 
supported by the Faculty Research Development Fund (FRDF) of The
University of Auckland (project no. 3701119), and partially
 supported by the Slovenian Research Agency under Project no. J1-3608 and Program no. P1-0222. 
Part of the research
was done while the first author was on leave from the University of Maribor, and held a visiting professorship at the Universität Konstanz in 2011}
\keywords{linear matrix inequality, LMI, spectrahedron, semidefinite programming, SDP, quadratic module, infeasibility, duality theory, real radical, Farkas' lemma}


\begin{document}

\setcounter{tocdepth}{2}
\contentsmargin{2.55em} 
\dottedcontents{section}[3.8em]{}{2.3em}{.4pc} 
\dottedcontents{subsection}[6.1em]{}{3.2em}{.4pc}

\makeatletter
\newcommand{\mycontentsbox}{%
{\centerline{NOT FOR PUBLICATION}
\small\tableofcontents}}
\def\enddoc@text{\ifx\@empty\@translators \else\@settranslators\fi
\ifx\@empty\addresses \else\@setaddresses\fi
\newpage\mycontentsbox}
\makeatother

\begin{abstract}
Farkas' lemma is a fundamental result from \emph{linear programming} providing \emph{linear} certificates for infeasibility of systems of linear inequalities.
In \emph{semidefinite programming}, such linear certificates only exist for \emph{strongly} infeasible linear matrix inequalities. 
We provide \emph{nonlinear algebraic} certificates for \emph{all} infeasible linear matrix inequalities in the spirit of real algebraic geometry:
A linear matrix inequality $A(x)\succeq 0$ is infeasible if and only if $-1$ lies in the quadratic module associated to $A$.
We also present a new exact duality theory for 
semidefinite programming, motivated by 
the real radical and
 sums of squares certificates from real algebraic geometry.
\end{abstract}

\maketitle

\section{Introduction}\label{sec:intro}

A \emph{linear matrix inequality} (LMI) is a condition of the form
\[
A(x) = A_0 + \sum_{i=1}^n x_i A_i \succeq0\qquad(x\in\R^n)
\]
where the $A_i$ are symmetric matrices of the same size and one is interested in the solutions $x\in\R^n$ making $A(x)$ positive semidefinite ($A(x)\succeq0$).
The solution set to such an inequality is a closed convex semialgebraic subset of $\R^n$ called a {\em spectrahedron} or an \emph{LMI domain}. Optimization of linear
objective functions over spectrahedra is called \emph{semidefinite programming} (SDP) \cite{bv96,to01,wsv00}, and is a subfield of convex optimization. 
In this article, we are  concerned with the duality theory of SDP from 
a viewpoint of a real algebraic geometer, 
and with the important
SDP feasibility problem: When is an LMI \emph{feasible}; i.e., when is there
an $x\in\R^n$ satisfying $A(x)\succeq0$?

A diagonal LMI, where all $A_i$ are diagonal matrices, is just a system of linear inequalities, 
and its solution set 
 is a polyhedron. Optimization of linear
objective functions over polyhedra is called \emph{linear programming} (LP).
The ellipsoid method developed by Shor, Yudin, Nemirovskii and Khachiyan
showed at the end of the 1970s for the first time that the LP feasibility problem (and actually the problem of solving LPs) can be solved in
(deterministically) polynomial time (in the bit model of computation assuming rational coefficients) \cite[Chapter 13]{sr86}. Another breakthrough
came in the 1980s with the introduction of the more practical interior point methods by Karmarkar and their theoretical underpinning by
Nesterov and Nemirovskii \cite{nn94,ne07}.

The motivation to replace the prefix ``poly'' by ``spectra'' is to replace ``many'' values of linear polynomials (the diagonal values of $A(x)$) by  the ``spectrum'' of $A(x)$ (i.e., the set of its eigenvalues).
The advantage of LMIs over systems of linear inequalities (or of spectrahedra over polyhedra, and SDP over LP, respectively) is a considerable gain of
expressiveness which makes LMIs an important tool in several areas of applied and pure mathematics.
Many problems in control theory, system identification and signal processing can be formulated using
LMIs \cite{befb94,par00,hg05,du07,ce10}. Combinatorial optimization problems can often be modeled or approximated by SDPs \cite{go97}.
LMIs also find application in real algebraic geometry for finding sums of squares decompositions of polynomials \cite{las10,ma08}.
There is even a hierarchy of SDP approximations to polynomial optimization problems \cite{lau09} consisting of the so-called
Lasserre moment relaxations. 
In this article, rather than trying to solve polynomial optimization problems by
using SDP, we borrow ideas and techniques from real algebraic geometry and polynomial optimization in order to get new results
in the theory of semidefinite programming.

The price to pay for the increased expressivity of SDP is that they enjoy some less good properties. 
First of all, the complexity of solving general SDPs is a very subtle issue.
For applications in combinatorial optimization, it typically follows  from the  theory of the
ellipsoid method \cite{sr86} or interior point methods \cite{nn94} that the translation into SDP yields a polynomial time algorithm (see \cite[Section 1.9]{dk02} for exact statements).
However, the complexity status of the LMI feasibility problem (the problem of deciding
whether a given LMI with rational coefficients has a solution) is largely unknown. What is known is essentially
only that (in the bit model) LMI feasibility lies
either in $\text{NP}\cap\text{co-NP}$ or outside of $\text{NP}\cup\text{co-NP}$. Therefore it cannot be NP-complete unless
$\text{NP}=\text{co-NP}$.
This  follows from our work below, but has been already proven by
Ramana \cite{ra97} in 1997; Porkolab and Khachiyan \cite{pk97} have proved that either for fixed number variables or
for fixed matrix size, the LMI feasibility problem lies in \text{P}.
Second, the standard (Lagrange-Slater) dual of a semidefinite program works well when the feasible set is full-dimensional
(e.g. if there is $x\in\R^n$ with $A(x)\succ0$). However, in general, strong duality can fail badly, and there is no easy
way of reducing to the full-dimensional case. 
Even the corresponding version of Farkas' lemma
fails for SDP.

\smallskip
We prove in this paper a nonlinear Farkas' lemma for SDP by giving
algebraic certificates for infeasibility of an LMI. Furthermore, we  
present a 
new exact duality theory for SDP.
The inspiration for our \emph{sums of squares dual} comes from real algebraic geometry,
more precisely from sums of squares
representations and the Real Nullstellensatz \cite{ma08, pd01, sc09}. We 
believe that this new connection will lead to further insights in the future.

\subsection*{Reader's guide}
The paper is organized as follows:
We fix terminology and notation in Section \ref{sec:not}.
Our main results, including the sums of squares dual of an SDP, are presented
in Section \ref{sec:sdp}. The two crucial ingredients needed in the proof are 
a low-dimensionality certificate for spectrahedra (see Subsection \ref{subs:lowdim}),
and a new Positivstellensatz for linear polynomials nonnegative on a spectrahedron (see 
Theorem \ref{indsos} in Subsection
\ref{subs:linpos}). 
Finally, in Subsection \ref{subs:newdual}, we present the sums of squares dual 
\hyperref[sdp:dsos]{${\rm (D^{\rm sos})} $}
of an SDP,
and Theorem \ref{sosram}, an algorithmic variant of the linear Positivstellensatz.
Section \ref{sec:pospoly} contains applications of these results.
For example, in Subsection \ref{subs:revisit} we 
interpret Theorems \ref{indsos} and \ref{sosram} in the language of real algebraic geometry,
and in 
Subsection \ref{subs:degbound} we prove 
a nonlinear Farkas' lemma for SDP by giving
nonlinear algebraic certificates for infeasibility of an LMI.
These results use quadratic modules from real algebraic geometry. 
As a side product we introduce a hierarchy for infeasibility of LMIs, whose first stage coincides with
 strong infeasibility. Subsection \ref{subs:bs} contains certificates for boundedness of
 spectrahedra which are used to give a Putinar-Schm\"udgen-like Positivstellensatz
 for polynomials positive on bounded spectrahedra. 
Finally, the article concludes with two brief sections containing examples illustrating our results, 
and an application to positive linear functionals.

\section{Notation and terminology}\label{sec:not}

We write $\N:=\{1,2,\dots\}$ and $\R$ for the sets of natural and real numbers, respectively.
Let $R$ be a unital commutative ring. For any matrix $A$ over $R$, we denote by $A^*$ its transpose.
Then $SR^{m\times m}:=\{A\in R^{m\times m}\mid A=A^*\}$ denotes the set of all \emph{symmetric} $m\times m$ matrices.
Examples of these include \emph{hermitian squares}, i.e.,
elements of the form $A^*A$ for some $A\in R^{n\times m}$.

Recall that a matrix $A\in\Rmm$ is called
\textit{positive semidefinite} (\emph{positive definite}) if it is
symmetric and $v^*Av\ge 0$ for all vectors $v\in\R^m$ ($v^*Av>0$ for all $v\in\R^m\setminus\{0\}$).
For real matrices $A,B\in\Rmm$, we write $A\preceq B$
(respectively $A\prec B$) to express that $B-A$ is positive semidefinite
(respectively positive definite). We denote by $S\R^{m\times m}_{\succeq0}$ and $S\R^{m\times m}_{\succ0}$ the convex cone
of all positive semidefinite and positive definite
matrices of size $m$, respectively.

\subsection{Matrix polynomials}

Let $\x=(\x_1,\dots,\x_n)$ be an $n$-tuple of 
commuting variables and $\R\cx$ the polynomial ring.
With $\R\cx_k$ we denote the vector space of all polynomials
of degree $\leq k$.
A (real) \emph{matrix polynomial} is a matrix whose entries are polynomials from $\RR\cx$.
It is \emph{linear} or \emph{quadratic} if its entries are from $\RR\cx_1$ or
$\RR\cx_2$, respectively.
A \emph{matrix polynomial} is an element of the ring $\R\cx^{m\times n}$ for some $m,n\in\N$, and can be viewed
either as a polynomial with matrix coefficients, or as a matrix whose entries are polynomials.
For a comprehensive treatment of the theory of matrix polynomials we refer the reader to the book \cite{glr82} and the references therein.

\subsection{Linear pencils and spectrahedra}

We use the term \emph{linear pencil} as a synonym and abbreviation for symmetric linear matrix polynomial.
A linear pencil $A\in\rx^{\al\times\al}$ can thus be written uniquely as $$A=A_0+\x_1A_1+\dots+\x_nA_n$$ with
$A_i\in S\R^{\al\times\al}$.
If $A\in\rx^{\al\times\al}$ is a linear pencil, then the condition $A(x)\succeq0$ ($x\in\R^n$) is called a \emph{linear matrix inequality} (LMI)
and its solution set $$S_A:=\{x\in\R^n\mid A(x)\succeq0\}$$
is called a \emph{spectrahedron} (or also an \emph{LMI set}).
We say that $A$ is \emph{infeasible} if $S_A=\emptyset$, and $A$ is \emph{feasible} if $S_A\neq\emptyset$.

Obviously, each spectrahedron is a closed convex semialgebraic subset of $\R^n$. Optimization of linear
objective functions over spectrahedra is called \emph{semidefinite programming} (SDP) \cite{bv96,to01,wsv00}. 
If $A\in\rx^{\al\times\al}$ is a \emph{diagonal} linear pencil, then $A(x)\succeq0$ ($x\in\R^n$) is just a (finite) system of (non-strict) linear inequalities
and $S_A$ is a (closed convex) polyhedron. Optimization of linear
objective functions over polyhedra is called \emph{linear programming} (LP).
The advantage of LMIs over systems of linear inequalities (or of spectrahedra over polyhedra, and SDP over LP, respectively) is a considerable gain of
expressiveness which makes LMIs an important tool in many areas of applied and pure mathematics \cite{befb94,go97,par00,las10}.
SDPs can be solved efficiently using interior point methods \cite{nn94,st00,dk02}.

\subsection{Sums of squares}

Another example of symmetric matrix polynomials that are of special interest to us are sums of hermitian
squares in $\RR\cx^{m\times m}$. They are called \emph{sos-matrices}. More explicitly,
$S\in\RR\cx^{m\times m}$ is an sos-matrix if the following equivalent
conditions hold:
\begin{enumerate}[\rm(i)]
\item $S=P^*P$ for some $s\in\N$ and some $P\in\RR\cx^{s\times m}$;
\item $S=\sum_{i=1}^rQ_i^*Q_i$ for some $r\in\N$ and $Q_i\in\RR\cx^{m\times m}$;
\item $S=\sum_{i=1}^sv_iv_i^*$ for some $s\in\N$ and $v_i\in\RR\cx^m$.
\end{enumerate}
A special case are sums of squares in the polynomial ring $\rx$. They are called sos-polynomials
and they are nothing else but sos-matrices of size $1$.
We denote the set of all sos-matrices (of any size) over $\rx$ by
$$\Si^2:=\left\{P^*P\mid s,m\in\N, P\in\RR\cx^{s\times m}\right\}.$$
In particular, $\Si^2\cap\rx$ is the set of sos-polynomials.

Note that an sos-matrix $S\in\RR\cx^{m\times m}$ is positive semidefinite on $\R^n$
but not vice-versa, since e.g.~a polynomial nonnegative on $\R^n$ is not necessarily sos \cite{ma08,pd01}.
 
\subsection{Radical ideals}

Recall that for any ideal $\cI\subseteq\rx$, its \emph{radical} $\sqrt\cI$ and its \emph{real radical} $\rrad\cI$ are the ideals defined by
\begin{align*}
\rad\cI&:=\{f\in\rx\mid \exists k\in\N:f^k\in\cI\}\qquad\text{and}\\
\rrad\cI&:=\{f\in\rx\mid \exists k\in\N:\exists s\in\Si^2\cap\rx:f^{2k}+s\in\cI\}.
\end{align*}
An ideal $\cI\subseteq\rx$ is called \emph{radical} if $\cI=\rad\cI$ and \emph{real radical} if $\cI=\rrad\cI$.
We refer the reader to \cite{bcr98} for further details.

\section{Duality theory of semidefinite programming}\label{sec:sdp}

In this section we present a sums of squares inspired dual for SDP,
see Subsection \ref{subs:newdual}. 
It is derived from  two core ingredients, which are of independent interest.
First, Proposition  \ref{exsos} below allows us to detect low-dimensionality
of spectrahedra, thus leading to a codimension-reduction technique.
Second, Theorem \ref{indsos} gives a nonlinear algebraic certificate (i.e., a Positivstellensatz)
for linear polynomials nonnegative on a spectrahedron.

\subsection{Weakly feasible and weakly infeasible linear pencils}\label{subs:weakly}

Recall that the linear pencil
$A({\tt x})\in\R\cx^{\al\times\al}_1$
is infeasible if $S_A=\emptyset$. We call $A$ \emph{strongly infeasible} if
$$\dist\big(\{A(x)\mid x\in\R^n\},\ S\R^{\al\times\al}_{\succeq0}\big)>0,$$ and 
\emph{weakly infeasible} if it is infeasible but is not strongly infeasible.
A feasible linear pencil $A$ is \emph{strongly feasible} if
there is an $x\in\R^n$ such that $A(x)\succ0$, and \emph{weakly feasible} otherwise.
To $A$ we associate the convex cone 
\begin{align*}
C_A&:=\Big\{c+\sum_iu_i^*Au_i\mid c\in\R\nn,\ u_i\in\R^\al\Big\}\\
&~=\Big\{c+\tr(AS)\mid c\in\R\nn,\ S\in S\R^{\al\times\al}_{\succeq0}\Big\}
\subseteq\R\ac_1.
\end{align*}
Note that $C_A$ consists of linear polynomials nonnegative on $S_A$.

The following is an extension of Farkas' lemma from LP to SDP due to Sturm \cite[Lemma 2.18]{st00}.
We include its simple proof based on a Hahn-Banach separation argument.

\begin{lemma}[Sturm]\label{sturm}
A linear pencil $A$ is strongly infeasible if and only if $-1\in C_A$.
\end{lemma}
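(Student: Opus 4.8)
The plan is to prove the two implications separately, using a Hahn–Banach (hyperplane) separation argument in the finite-dimensional space $\sa$ of symmetric matrices, equipped with the trace inner product $\langle X,Y\rangle=\tr(XY)$.

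First I would treat the easy direction: $-1\in C_A$ implies $A$ strongly infeasible. If $-1 = c + \tr(AS)$ for some $c\in\R\nn$ and $S\in\sapsd$, then evaluating at any $x\in\R^n$ gives $-1 = c + \tr(A(x)S)$. Writing $A(x)$ in terms of its affine part, $\tr(A(x)S) = \tr(A_0 S) + \sum_i x_i\tr(A_i S)$ is an affine function of $x$, and the identity $-1 = c+\tr(A(x)S)$ holds for all $x$; since the left side is constant this forces $\tr(A_iS)=0$ for $i\ge 1$ and $\tr(A_0S) = -1-c \le -1$. Now $S\succeq 0$, so $S = S^{1/2}S^{1/2}$, and for any $x$ we have $\langle A(x), S\rangle = \tr(A(x)S)$ is constant equal to $-1-c<0$ while $\langle B, S\rangle = \tr(BS)\ge 0$ for every $B\succeq 0$. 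Thus $S$ (as a nonzero functional) strictly separates the affine subspace $\{A(x)\mid x\in\R^n\}$ from $\sapsd$, with a gap bounded below in terms of $\|S\|$; hence the distance between these two sets is positive, i.e.\ $A$ is strongly infeasible.

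For the converse, suppose $A$ is strongly infeasible, so $\dist(\mathcal L, \sapsd) =: \delta > 0$ where $\mathcal L := \{A(x)\mid x\in\R^n\}$ is an affine subspace of $\sa$ and $\sapsd$ is a closed convex cone. The plan is to apply a separating hyperplane theorem: there exist $S\in\sa$, $S\neq 0$, and $\gamma\in\R$ with $\tr(BS)\ge \gamma$ for all $B\in\sapsd$ and $\tr(A(x)S)\le \gamma$ for all $x\in\R^n$ (one must check that strict positivity of the distance, together with closedness of the cone and the affine subspace, gives genuine separation — this is where the ``strong'' hypothesis is essential, since two disjoint closed convex sets need not be separated unless one is compact or they are at positive distance). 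Since $\sapsd$ is a cone containing $0$, $\tr(BS)\ge\gamma$ for all $B\succeq 0$ forces $\gamma\le 0$ and $\tr(BS)\ge 0$ for all $B\succeq 0$, whence $S\succeq 0$. Since $\tr(A(x)S)$ is affine in $x$ and bounded above by $\gamma$, it must be constant, so $\tr(A_iS)=0$ for $i\ge 1$ and $\tr(A_0 S)\le\gamma<0$ (strict, because otherwise $A_0\in\mathcal L$ would meet $\sapsd$ in the closure, contradicting $\delta>0$; more carefully, one arranges the separation to be strict, e.g.\ by separating a small ball around $\mathcal L$ from the cone). Rescaling $S$ so that $\tr(A_0S) = -1$, we get $\tr(A(x)S) = -1$ identically, i.e.\ $-1 = \tr(AS) \in C_A$ (taking $c=0$ in the definition).

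The main obstacle is the separation step in the converse: one needs to produce a hyperplane that strictly separates $\mathcal L$ from $\sapsd$ and simultaneously is ``horizontal'' on $\mathcal L$ (constant value of $\tr(A(x)S)$). The cleanest route is to separate the two disjoint closed convex sets $\mathcal L$ and $\sapsd$, which is possible precisely because their distance $\delta$ is positive (take, e.g., the set $\mathcal L + \delta' B$ for small $\delta'<\delta$, which is still disjoint from the cone and now has nonempty interior, or invoke directly the theorem that two closed convex sets at positive distance can be strictly separated). Verifying that the resulting functional $S$ is (after rescaling) automatically positive semidefinite and constant along $\mathcal L$ is then routine from the cone structure, as sketched above.
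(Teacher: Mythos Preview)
Your proposal is correct and follows essentially the same approach as the paper: both directions are handled via the trace pairing on $\sa$, with the converse obtained by a strict hyperplane separation (made possible by the positive-distance hypothesis, e.g.\ by fattening the affine subspace with a small ball) and then exploiting the cone structure of $\sapsd$ to force $S\succeq0$, constancy of $\tr(A(x)S)$, and strict negativity of $\tr(A_0S)$ before rescaling. The paper's write-up differs only in cosmetic details (it fattens both sets rather than just $\mathcal L$, and writes $S=\sum_i u_iu_i^*$ explicitly at the end).
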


\begin{proof}
Suppose
$$A=A_0+\sum_{i=1}^n {\tt x}_iA_i \in\R\cx^{\al\times\al}_1 $$
is strongly infeasible. Then the non-empty convex sets $\{A(x)\mid x\in\R^n\}$
and $\sapsd$ can be \emph{strictly}
separated by an affine hyperplane (since their Minkowski sums with a small ball are still disjoint and can therefore be separated
\cite[Theorem III.1.2]{ba02}).
This means that there is a non-zero linear functional $$\ell\colon\sa\to\R$$
and $\ga\in\R$,
with $\ell(\sapsd)\subseteq\R_{>\ga}$ and
$\ell(\{A(x)\mid x\in\R^n\})\subseteq\R_{<\ga}$.
Choose
$B\in\sa$ such that $$\ell(A)=\tr(AB)$$ for all $A\in\sa$.
Since $\ell(\sapsd)$ is bounded from below, by the self-duality of the
convex cone of positive semidefinite matrices, $0\neq B\succeq0$.
Similarly, we obtain $\ell(A_i)=0$
for $i\in\{1,\dots,n\}$. Note that $\ga<0$ since $0=\ell(0)\in\R_{>\ga}$ so we can assume
$\ell(A_0)=-1$ by scaling.
Writing $B=\sum_i u_iu_i^*$ with $u_i\in\R^\al$, we obtain
$$
-1= \ell(A_0)=\ell(A(x))= \tr (A(x)\sum_i u_iu_i^*)=\sum_i u_i^* A(x)u_i.
$$
for all $x\in\R^n$.
Hence $-1=\sum_i u_i^*Au_i\in C_A$.

Conversely, if $-1\in C_A$, i.e., $-1=c+\sum_i u_i^*Au_i$ for some $c\ge0$ and $u_i\in\R^\al$, then with $B:=\sum_iu_iu_i^*\in\sapsd$ we obtain a linear form
$$\ell\colon\sa\to\R, \quad A\mapsto \tr(AB)$$
satisfying $\ell(\sapsd)\subseteq\R_{\geq0}$ and $\ell(\{A(x)\mid x\in\R^n\})=\{-1-c\}\subseteq\R_{\le-1}$. So $A$ is strongly infeasible.
\end{proof}

\begin{lemma}\label{wind}
Let $A\in S\R\cx^{\al\times\al}_1$ be an infeasible linear pencil. Then the following are equivalent:
\begin{enumerate}[{\rm(i)}]
\item $A$ is weakly infeasible;\label{wind1}
\item $S_{A+\ep I_\al}\neq\emptyset$ for all $\ep>0$.\label{wind2}
\end{enumerate}
\end{lemma}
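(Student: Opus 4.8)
The plan is to prove the two implications separately, with the forward direction being essentially a definition-chase and the reverse direction requiring a compactness argument.

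For (\ref{wind1}) $\Rightarrow$ (\ref{wind2}): Assume $A$ is weakly infeasible, so $A$ is infeasible but not strongly infeasible. By definition of ``not strongly infeasible,''
\[
\dist\big(\{A(x)\mid x\in\R^n\},\ S\R^{\al\times\al}_{\succeq0}\big)=0.
\]
Fix $\ep>0$. I want to produce $x\in\R^n$ with $A(x)+\ep I_\al\succeq0$. The distance being zero gives sequences $x_k\in\R^n$ and $P_k\in S\R^{\al\times\al}_{\succeq0}$ with $\|A(x_k)-P_k\|\to 0$ (in, say, the Frobenius norm). For $k$ large, $\|A(x_k)-P_k\|<\ep$, hence $A(x_k)\succeq P_k-\ep I_\al\succeq -\ep I_\al$ (using that $\|M\|<\ep$ implies $M\succeq-\ep I_\al$ for symmetric $M$, since all eigenvalues exceed $-\ep$). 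Thus $A(x_k)+\ep I_\al\succeq0$, so $x_k\in S_{A+\ep I_\al}$ and this set is nonempty. Since $\ep>0$ was arbitrary, (\ref{wind2}) holds.

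For (\ref{wind2}) $\Rightarrow$ (\ref{wind1}): I argue the contrapositive. Suppose $A$ is infeasible but \emph{not} weakly infeasible, i.e.\ $A$ is strongly infeasible. Then
\[
\de:=\dist\big(\{A(x)\mid x\in\R^n\},\ S\R^{\al\times\al}_{\succeq0}\big)>0.
\]
I claim $S_{A+\ep I_\al}=\emptyset$ for $\ep<\de$: if $x\in S_{A+\ep I_\al}$, then $A(x)+\ep I_\al\succeq0$, so $A(x)+\ep I_\al$ is a positive semidefinite matrix at distance $\|\ep I_\al\|=\ep\sqrt{\al}$ — or more carefully, I should take $\ep$ small enough that $\ep\sqrt{\al}<\de$ — from $A(x)$, contradicting that every positive semidefinite matrix is at distance $\ge\de$ from $A(x)$. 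Hence $S_{A+\ep I_\al}=\emptyset$ for all sufficiently small $\ep>0$, so (\ref{wind2}) fails.

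The main subtlety, and the only place one must be slightly careful, is matching the norm used in the definition of $\dist$ with the operator-theoretic estimate ``$\|M\|<\ep\Rightarrow M\succeq-\ep I$'': this is immediate for the spectral norm, and for the Frobenius norm one just absorbs a dimensional constant $\sqrt{\al}$ as indicated above, which affects nothing since we quantify over all $\ep>0$. Alternatively, and perhaps cleanest, one observes directly that $A$ is strongly infeasible iff there exists $\ep_0>0$ with $A(x)\not\succeq-\ep_0 I_\al$ for all $x$, i.e.\ iff $A+\ep_0 I_\al$ is infeasible for some $\ep_0>0$; negating, $A$ is weakly infeasible iff $A$ is infeasible and $A+\ep I_\al$ is feasible for every $\ep>0$, which is exactly the asserted equivalence. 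I expect no real obstacle here — Sturm's Lemma \ref{sturm} is not even needed, only the definitions of strong and weak infeasibility together with the elementary relationship between the norm and the semidefinite order.
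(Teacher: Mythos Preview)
Your proof is correct and follows essentially the same route as the paper's: both directions are direct definition-chases linking the distance condition to feasibility of $A+\ep I_\al$. The paper streamlines things by working with the operator norm from the outset (so $\|\ep I_\al\|=\ep$ and no $\sqrt\al$ factor appears) and proves (ii)$\Rightarrow$(i) directly rather than by contrapositive, but the substance is identical; note that no compactness is actually used, despite your opening remark.
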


\begin{proof}
Since all norms on a finite-dimensional vector space are equivalent, we can without loss of generality use the operator norm on $\R^{\al\times\al}$.

Suppose that \eqref{wind1} holds and $\ep>0$ is given. Choose $B\in\sapsd$ and $x\in\R^n$ with $\|B-A(x)\|<\ep$.
Then $A(x)+\ep I_\al\succeq0$, i.e., $x\in S_{A+\ep I_m}$.

Conversely, suppose that \eqref{wind2} holds. To show that $$\dist\big(\{A(x)\mid x\in\R^n\},\ \sapsd\}\big)=0,$$ we let $\ep>0$ be given and have to find
$B\in\sapsd$ and $x\in\R^n$ with $$\|A(x)-B\|\le\ep.$$ 
But this is easy: choose $x\in\R^n$ with $A(x)+\ep I_\al\succeq0$, and set $B:=A(x)+\ep I_\al$.
\end{proof}

The following lemma is due to Bohnenblust \cite{bo48} (see also \cite[Theorem 4.2]{ba01} for an easier accessible reference). While Bohnenblust gave a non-trivial bound on the number of terms that are really
needed to test condition \eqref{bohne1} below, we will not need this improvement
and therefore take the trivial bound $\al$. 
Then the proof becomes easy and we include
it for the convenience of the reader.

\begin{lemma}[Bohnenblust]\label{bohne07}
For $A_1,\dots,A_n\in S\R^{\al\times \al}$ the following are equivalent:
\begin{enumerate}[\rm(i)]
\item Whenever $u_1,\dots,u_\al\in\R^\al$ with $\sum_{i=1}^\al u_i^*A_ju_i=0$ for all
$j\in\{1,\dots,n\}$, then $u_1=\dots=u_\al=0$;\label{bohne1}
\item $\Span(A_1,\dots,A_n)$ contains a positive definite matrix.\label{bohne2}
\end{enumerate}
\end{lemma}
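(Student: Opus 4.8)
The plan is to read both conditions as a conic-duality pair for the linear subspace $V:=\Span(A_1,\dots,A_n)\subseteq\sa$ and the cone $\sapsd$, and then close the loop with a Hahn--Banach separation. The dictionary is the identity $\sum_{i=1}^\al u_i^*A_ju_i=\tr(A_jB)$ with $B:=\sum_{i=1}^\al u_iu_i^*$; as $(u_1,\dots,u_\al)$ ranges over $(\R^\al)^\al$, the matrix $B$ ranges over all of $\sapsd$, since every positive semidefinite $\al\times\al$ matrix is a sum of at most $\al$ rank-one matrices (spectral theorem), padding with zero vectors if necessary. Thus \eqref{bohne1} says exactly that $B\in\sapsd$ together with $\tr(A_jB)=0$ for all $j$ force $B=0$, i.e.\ $V^\perp\cap\sapsd=\{0\}$ in the trace inner product, whereas \eqref{bohne2} says $V\cap\sapd\neq\emptyset$.

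The implication \eqref{bohne2}$\Rightarrow$\eqref{bohne1} I would dispatch directly: if $M=\sum_jc_jA_j$ is positive definite and $B=\sum_iu_iu_i^*\succeq0$ has $\tr(A_jB)=0$ for all $j$, then $0=\tr(MB)=\sum_iu_i^*Mu_i$, a sum of nonnegative terms each of which vanishes only when $u_i=0$; hence all $u_i=0$.

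For \eqref{bohne1}$\Rightarrow$\eqref{bohne2} I would argue contrapositively. Assume $V$ contains no positive definite matrix. Then the nonempty convex set $V$ is disjoint from the nonempty open convex cone $\sapd$, so there is a hyperplane separating them: a nonzero linear functional $\ell$ on $\sa$ and $\ga\in\R$ with $\ell\le\ga$ on $V$ and $\ell\ge\ga$ on $\sapd$. Since $V$ is a subspace, $\ell$ bounded above on $V$ forces $\ell|_V\equiv0$, so that $\ga\ge\ell(0)=0$ and $\ell\ge0$ on $\sapd$, hence on its closure $\sapsd$. Write $\ell(\cdot)=\tr(B\,\cdot)$ with $B\in\sa$; self-duality of $\sapsd$ gives $B\succeq0$, and $B\neq0$ while $\tr(A_jB)=0$ for all $j$. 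A spectral decomposition $B=\sum_{i=1}^\al u_iu_i^*$ then produces vectors not all zero with $\sum_iu_i^*A_ju_i=0$ for every $j$, contradicting \eqref{bohne1}.

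The only delicate point is the separation step, and specifically the two routine refinements it needs: that the separating functional may be taken to vanish on $V$ (forced, as $V$ is a subspace) and to be nonnegative on the whole closed cone $\sapsd$, not just on its interior (forced by continuity). Everything after that --- self-duality of the positive semidefinite cone and the spectral decomposition of $B$ --- is mechanical, so I do not expect any genuine obstacle once this separation is set up carefully.
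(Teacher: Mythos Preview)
Your proof is correct and follows essentially the same route as the paper's: both translate condition \eqref{bohne1} into ``$B\in\sapsd$ with $\tr(A_jB)=0$ for all $j$ forces $B=0$'' via the identity $\sum_iu_i^*A_ju_i=\tr(A_jB)$, and then handle \eqref{bohne1}$\Rightarrow$\eqref{bohne2} by separating $V=\Span(A_1,\dots,A_n)$ from $\sapd$ to produce a nonzero $B\succeq0$ orthogonal to all $A_j$. Your write-up is slightly more explicit about why the separating functional must vanish on the subspace $V$ and extend nonnegatively to the closure $\sapsd$, but this is exactly what the paper packages into its one-line ``then of course'' step.
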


\begin{proof}
It is trivial that \eqref{bohne2} implies \eqref{bohne1}. To prove that \eqref{bohne1} implies \eqref{bohne2}, note that
$\sapsd=\{\sum_{i=1}^\al u_iu_i^*\mid u_1,\dots,u_\al\in\R^\al\}$ and 
$\sum_{i=1}^\al u_i^*Bu_i=\tr(B\sum_{i=1}^\al u_iu_i^*)$ for all
$u_1,\dots,u_\al\in\R^\al$. The hypotheses thus says that, given any $B\in\sapsd$, we have 
\beq\label{eq:bbl1}
\tr(A_1B)=\dots=\tr(A_nB)=0\implies B=0.
\eeq
Now suppose that $\Span(A_1,\dots,A_n)\cap\sapd=\emptyset$. By the standard separation theorem for two non-empty disjoint convex sets
(see for example \cite[Theorem III.1.2]{ba02}), $\Span(A_1,\dots,A_n)$ and $\sapd$ can be separated by a hyperplane (the separating affine
hyperplane must obviously contain the origin). Therefore there is a non-zero linear functional $L\colon\sa\to\R$ with 
$$L(\sapd)\subseteq\R\nn\quad\text{and}\quad
L(\Span(A_1,\dots,A_n))\subseteq\R_{\le0}.$$ 
Then of course $L(\sapsd)\subseteq\R\nn$ and $L(\Span(A_1,\dots,A_n))=\{0\}$. Now choose
$B\in\sa$ such that 
$$L(A)=\tr(AB)\quad\text{for all}\quad A\in\sa.$$ 
Then $B\neq 0$, $B\in\sapsd$ and $\tr(A_1B)=\dots=\tr(A_nB)=0$,
contradicting \eqref{eq:bbl1}.
\end{proof}

\begin{lemma}\label{kaffeebohne07}
Let $A\in\R\cx^{\al\times\al}_1$ be a linear pencil which is either weakly infeasible or weakly feasible.
Then there are $k\ge1$ and $u_1,\dots,u_k\in\R^\al\setminus\{0\}$
such that $\sum_{i=1}^ku_i^*Au_i=0$.
\end{lemma}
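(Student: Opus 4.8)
The plan is to recast the conclusion in linear-algebraic form. For $u_1,\dots,u_k\in\R^\al$, the linear polynomial $\sum_{i=1}^k u_i^*Au_i$ equals $\tr(A_0B)+\sum_{j=1}^n\x_j\,\tr(A_jB)$ with $B:=\sum_{i=1}^k u_iu_i^*$, so it is the zero polynomial exactly when $\tr(A_iB)=0$ for all $i\in\{0,1,\dots,n\}$. Conversely, every nonzero $B\in\sapsd$ can be written as $\sum_{i=1}^k u_iu_i^*$ with $k=\rank B\ge1$ and all $u_i\ne0$, via its spectral decomposition. So the task reduces to producing a nonzero $B\in\sapsd$ that is trace-orthogonal to $A_0,A_1,\dots,A_n$.

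To get such a $B$, I would exploit that in \emph{both} cases $A$ fails to be strongly feasible, i.e.\ $A(x)\not\succ0$ for all $x\in\R^n$. Then the affine subspace $\{A(x)\mid x\in\R^n\}$ is disjoint from the nonempty open convex cone $\sapd$, so by Hahn--Banach separation (cf.\ \cite[Theorem III.1.2]{ba02}) there are a nonzero $B\in\sa$ and $\ga\in\R$ with $\tr(A(x)B)\le\ga$ for every $x$ and $\tr(CB)>\ga$ for every $C\in\sapd$. Taking closures gives $\tr(CB)\ge\ga$ for all $C\in\sapsd$; plugging in $C=0$ yields $\ga\le0$, and testing against arbitrarily large positive definite matrices forces $B\succeq0$ by self-duality of $\sapsd$. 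Finally $\tr(A(x)B)=\tr(A_0B)+\sum_j x_j\tr(A_jB)$ is bounded above on $\R^n$, so $\tr(A_jB)=0$ for $j=1,\dots,n$ and $\tr(A_0B)\le\ga\le0$. Thus $B$ is already orthogonal to $A_1,\dots,A_n$ and the only thing left is the reverse inequality $\tr(A_0B)\ge0$.

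That inequality is where the specific hypothesis enters, and I expect it to be the crux. If $A$ is weakly feasible, choose $x^*\in S_A$; then $0\le\tr(A(x^*)B)=\tr(A_0B)$, so $\tr(A_0B)=0$. If $A$ is weakly infeasible, Lemma \ref{wind} gives, for each $\ep>0$, a point $x^{(\ep)}$ with $A(x^{(\ep)})+\ep I_\al\succeq0$, whence $0\le\tr\big((A(x^{(\ep)})+\ep I_\al)B\big)=\tr(A_0B)+\ep\,\tr(B)$; letting $\ep\to0$ gives $\tr(A_0B)\ge0$, again forcing $\tr(A_0B)=0$. (Equivalently, in the infeasible case one can quote Lemma \ref{sturm}: not being strongly infeasible means $-1\notin C_A$, which unravels to ``$\tr(A_0S)\ge0$ for all $S\in\sapsd$ with $\tr(A_jS)=0$ for $j\ge1$'', applied to $S=B$.) In all cases $\tr(A_iB)=0$ for every $i$, and decomposing $B\ne0$ spectrally as above produces the required $u_1,\dots,u_k$. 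The delicate point is genuinely this last step: the separation alone only certifies $\tr(A_0B)\le0$, and a strongly infeasible pencil is precisely one admitting such a $B$ with $\tr(A_0B)<0$, so the (near-)feasibility assumption cannot be dispensed with.
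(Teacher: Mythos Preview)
Your proof is correct. It takes a different route from the paper's, though the ingredients are closely related.

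The paper argues by contradiction: if no such $u_i$ exist, then Bohnenblust's lemma (Lemma~\ref{bohne07}) applied to $A_0,A_1,\dots,A_n$ yields $x_0,\dots,x_n$ with $x_0A_0+\dots+x_nA_n\succ0$, and a short case analysis on the sign of $x_0$ (using Lemma~\ref{wind} when $x_0<0$) then forces $A$ to be strongly feasible, a contradiction. You instead work directly on the dual side: a single Hahn--Banach separation between the affine range $\{A(x)\}$ and $\sapd$ produces a nonzero $B\succeq0$ orthogonal to $A_1,\dots,A_n$ with $\tr(A_0B)\le0$, and the weak (in)feasibility hypothesis (again through Lemma~\ref{wind} in the infeasible case) supplies the reverse inequality $\tr(A_0B)\ge0$. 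Spectrally decomposing $B$ then gives the $u_i$.

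The two arguments are essentially dual to each other: the paper's use of Bohnenblust is itself a separation argument that lands on the ``primal'' side (a positive definite combination of the $A_i$), whereas you stay on the ``dual'' side (a positive semidefinite certificate $B$). Your approach is slightly more self-contained in that it bypasses Lemma~\ref{bohne07} entirely; the paper's approach has the virtue of isolating Bohnenblust as a reusable standalone statement. Both rely on Lemma~\ref{wind} at exactly the same point, namely to rule out the possibility $\tr(A_0B)<0$ that would correspond to strong infeasibility.
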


\begin{proof}
Assume that the conclusion is false.
By Lemma \ref{bohne07}, we find  $x_0,x_1,\dots,x_n\in\R$ such that $$x_0A_0+x_1A_1+\dots+x_nA_n\succ0.$$
Of course it is impossible that $x_0>0$ since otherwise $A\big(\frac{x_1}{x_0},\ldots,\frac{x_n}{x_0} \big)\succ0$.
Also $x_0=0$ is excluded (since otherwise
$A(cx_1,\dots,cx_n)\succ0$ for $c>0$ large enough). Hence without loss of generality $x_0=-1$, i.e.,
$x_1A_1+\dots+x_nA_n\succ A_0$. Choose $\ep>0$ such that
$$x_1A_1+\dots+x_nA_n\succ A_0+2\ep I_\al.$$ By Lemma \ref{wind}, we can choose some $y\in S_{A+\ep I_\al}$. But then
$$A_0+(x_1+2y_1)A_1+\dots+(x_n+2y_n)A_n\succ2(A_0+\ep I_\al+y_1A_1+\dots+y_nA_n)\succeq0,$$ contradicting the
hypotheses.
\end{proof}

\subsection{An algebraic glimpse at standard SDP duality}\label{subs:standard}

We now recall briefly the standard duality theory of SDP. We present it from the viewpoint of a real algebraic geometer, i.e., we use the language
of polynomials in the formulation of the primal-dual pair of SDPs and in the proof of strong duality. This is necessary for a better understanding of
the sums of squares dual 
\hyperref[sdp:dsos]{${\rm (D^{\rm sos})} $}
given in Subsection \ref{subs:newdual} below.

A semidefinite program 
\hyperref[sdp:p]{(P)}
and its standard dual \hyperref[sdp:d]{(D)} is given by a linear pencil $A\in\R[\x]_1^{\al\times \al}$ and
a linear polynomial $\ell\in\R[\x]_1$ as follows:

\bigskip
$
\begin{array}[t]{lrl}
{\rm (P)}\label{sdp:p} &\text{minimize}&\ell(x)\\
&\text{subject to}&x\in\R^n\\
&&A(x)\succeq0
\end{array}
$\hfill
$
\begin{array}[t]{lrl}
{\rm (D)}\label{sdp:d} &\text{maximize}&a\\
&\text{subject to}&S\in S\R^{\al\times\al},\ a\in\R\\
&&S\succeq0\\
&&\ell-a=\tr(AS)
\end{array}
$

\bigskip\noindent
To see that this corresponds (up to some minor technicalities) to the formulation in the literature, just write the polynomial constraint
$\ell-a=\tr(AS)$ of the dual as $n+1$ linear equations by comparing coefficients.

The optimal values of \hyperref[sdp:p]{(P)} and \hyperref[sdp:d]{(D)} are defined to be
\begin{align*}
P^*&:=\inf\{\ell(x)\mid x\in\R^n,\ A(x)\succeq0\}\in\R\cup\{\pm\infty\}\qquad\text{and}\\
D^*&:=\sup\{a\mid S\in S\R^{\al\times\al}_{\succeq0},\ a\in\R,\ \ell-a=\tr(AS)\}\in\R\cup\{\pm\infty\},
\end{align*}
respectively, where the infimum and the supremum is taken in the ordered set $\{-\infty\}\cup\R\cup\{\infty\}$ (where $\inf\emptyset=\infty$ and
$\sup\emptyset=-\infty$).
By \emph{weak duality}, we mean that $P^*\ge D^*$, or equivalently, that the objective value of \hyperref[sdp:p]{(P)} at any of its feasible points is
greater or equal to the objective value of \hyperref[sdp:d]{(D)}  at any of its feasible points.

Fix a linear pencil $A$. It is easy to see that weak duality holds for all primal objectives $\ell$ if and only if
$$f\in C_A\implies\text{$f\ge0$ on $S_A$}$$
holds for all $f\in\rx_1$, which is of course true.
By \emph{strong duality}, we mean that $P^*=D^*$ (\emph{zero duality gap}) and that (the objective of) \hyperref[sdp:d]{(D)}  \emph{attains} this common optimal value
in case it is finite. It is a little exercise to see that strong duality for all primal objectives $\ell$ is equivalent to
$$\text{$f\ge0$ on $S_A$}\iff f\in C_A$$
for all $f\in\rx_1$.

Unlike weak duality, strong duality fails in general (cf.~Subsection \ref{subs:ex} below; Pataki recently characterized
all linear pencils $A$ such that there exists a linear objective function $\ell$ for which strong duality fails \cite{pat}).
However, it is
well-known that it does hold when the feasible set $S_A$ of the primal \hyperref[sdp:p]{(P)}  has non-empty interior
(e.g. if $A$ is strongly feasible). 
Here is a real algebraic geometry flavored proof of this:

\begin{prop}[Standard SDP duality]\label{usual}
Let $A\in S\rx^{\al\times\al}_1$ be a linear pencil such that $S_A$ has non-empty interior. Then
$$f\ge0\text{\ on\ }S_A\ \iff\ f\in C_A$$
for all $f\in\rx_1$.
\end{prop}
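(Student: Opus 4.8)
The implication ``$\Leftarrow$'' is immediate: every element of $C_A$ has the shape $c+\sum_iu_i^*Au_i$ with $c\ge0$, which is nonnegative at each point of $S_A$. For ``$\Rightarrow$'' the plan is to use a Hahn--Banach separation in the finite-dimensional space $\rx_1\cong\R^{n+1}$, after first reducing to the case where $A$ is \emph{strongly} feasible. For the reduction, fix $x^0$ in the interior of $S_A$ and put $V:=\ker A(x^0)$. Since $x^0\pm\ep e_i\in S_A$ for small $\ep>0$ we get $A(x^0)\pm\ep A_i\succeq0$, and an elementary observation (for $v\in\ker P$ with $P\succeq0$ and $P\pm\ep Q\succeq0$, the inequalities $\pm\ep v^*Qv=v^*(P\pm\ep Q)v\ge0$ force $v^*Qv=0$, hence $(P\pm\ep Q)v=0$, hence $Qv=0$) shows that every $A_i$, and therefore also $A_0=A(x^0)-\sum_ix^0_iA_i$, annihilates $V$. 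In an orthonormal basis adapted to $\R^\al=V\oplus V^\perp$ the pencil is then block diagonal, $A=0_V\oplus A'$, with $A'(x^0)\succ0$; and one checks directly that $S_{A'}=S_A$ and $C_{A'}=C_A$, since $\tr(AS)$ depends only on the $V^\perp$-block of $S$, which ranges over all positive semidefinite matrices. So we may assume $A(x^0)\succ0$.

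Next I would show that $C_A$ is \emph{closed}. With $\ep_0:=\lambda_{\min}(A(x^0))>0$, suppose $c_k+\tr(AS_k)\to g$ in $\rx_1$ with $c_k\ge0$ and $S_k\succeq0$. Evaluating at $x^0$ gives $c_k+\ep_0\tr(S_k)\le c_k+\tr(A(x^0)S_k)\to g(x^0)$, so the $c_k$ and $\tr(S_k)$ stay bounded, hence so do the matrices $S_k\succeq0$; passing to a convergent subsequence $(c_k,S_k)\to(c,S)$ exhibits $g=c+\tr(AS)\in C_A$.

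Now suppose, for contradiction, that $f\ge0$ on $S_A$ but $f\notin C_A$. Strictly separating $f$ from the closed convex cone $C_A$ yields a linear functional $L\colon\rx_1\to\R$ with $L\ge0$ on $C_A$ and $L(f)<0$. If $L(1)>0$, scale so $L(1)=1$; then $L$ is evaluation at the point $x:=(L(\x_1),\dots,L(\x_n))\in\R^n$, and $L\ge0$ on $C_A$ gives $u^*A(x)u=L(u^*Au)\ge0$ for all $u\in\R^\al$, i.e.\ $x\in S_A$, contradicting $f(x)=L(f)<0$. If $L(1)=0$, put $\la:=(L(\x_1),\dots,L(\x_n))$ and $M:=\sum_{i=1}^n\la_iA_i$; then $\tr(MS)=L(\tr(AS))\ge0$ for all $S\succeq0$, so $M\succeq0$, whence $A(x^0+t\la)=A(x^0)+tM\succ0$ and $x^0+t\la\in S_A$ for all $t\ge0$, while $f(x^0+t\la)=f(x^0)+tL(f)\to-\infty$ as $t\to\infty$, again contradicting $f\ge0$ on $S_A$. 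Either way we reach a contradiction, so $f\in C_A$.

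The main obstacle is the gap between ``$S_A$ has nonempty interior'' and ``$A$ is strongly feasible'': a diagonal pencil with an identically-zero diagonal entry has a full-dimensional $S_A$ yet no positive definite value, so the kernel lemma and the resulting block splitting $A=0_V\oplus A'$ are genuinely needed. Without them $C_A$ may fail to be closed, in which case separation delivers only $L(f)\le0$, which is too weak to contradict $f\ge0$ on $S_A$.
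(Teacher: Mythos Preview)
Your proof is correct. Both you and the paper establish that $C_A$ is closed and then separate, but the execution differs in two places. For closedness, the paper quotients by the subspace $U=\{u\in\R^\al\mid Au=0\}$ and argues that the surjection $\R\times(\R^\al/U)^\al\to C_A$, $(a,\bar u_1,\dots,\bar u_\al)\mapsto a^2+\sum_iu_i^*Au_i$ is proper because it is quadratically homogeneous and its zero fibre is trivial (the non-empty interior of $S_A$ is used exactly here, to deduce $Au_i=0$ from $A(x)u_i=0$ on $S_A$). Your block reduction $A=0_V\oplus A'$ is in fact computing the very same subspace---your $V=\ker A(x^0)$ coincides with the paper's $U$---but having passed to $A'$ with a positive definite value you can run a more elementary compactness argument bounding $\tr(S_k)$. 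For the case $L(1)=0$, the paper simply perturbs the separating functional to $L+\ep\,\ev_y$ for some $y\in S_A$, whereas you exploit strong feasibility to push along the ray $x^0+t\la$; both work, and your ray argument is where the reduction to $A(x^0)\succ0$ pays a second dividend. One small remark on your closing paragraph: $C_A$ is in fact closed under the bare hypothesis that $S_A$ has non-empty interior, as the paper's properness argument shows without any block splitting; it is only your particular compactness route that needs the reduction.
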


\begin{proof}
In a preliminary step, we show that the convex cone $C_A$ is closed in $\rx_1$. To this end,
consider the linear subspace $U:=\{u\in\R^\al\mid Au=0\}\subseteq\R^\al$.
The map
$$\ph\colon\R\times(\R^\al/U)^\al\to C_A,\quad(a,\bar u_1,\dots,\bar u_\al)\mapsto a^2+\sum_{i=1}^\al u_i^*Au_i$$
is well-defined and surjective.

Suppose $\ph$ maps $(a,\bar u_1,\dots,\bar u_\al)\in(\R^\al/U)^\al$ to $0$. Fix $i\in\{1,\dots,\al\}$. Then
$u_i^*A(x)u_i=0$ for all $x\in S_A$. Since $A(x)\succeq0$, this implies $A(x)u_i=0$ for all
$x\in S_A$. Using the hypothesis that $S_A$ has non-empty interior, we conclude that $Au_i=0$, i.e., $u_i\in U$.
Since $i$ was arbitrary and $a=0$, this yields $(a,\bar u_1,\dots,\bar u_\al)=0$.

This shows $\ph^{-1}(0)=\{0\}$. Together with the fact that $\ph$ is a (quadratically) homogeneous map, this implies that
$\ph$ is proper (see for example \cite[Lemma 2.7]{ps01}). In particular, $C_A=\im\ph$ is closed.

Suppose now that $f\notin\rx_1\setminus C_A$. The task is to find $x\in S_A$ such that $f(x)<0$.
Being a closed convex cone, $C_A$ is the intersection of all closed half-spaces containing
it. Therefore we find a linear map $\ps:\rx_1\to\R$ such that $\ps(C_A)\subseteq\R\nn$ and $\ps(f)<0$. We can assume
$\ps(1)>0$ since otherwise $\ps(1)=0$ and we can replace $\ps$ by $\ps+\ep\ev_y$ for some small $\ep>0$, where
$y\in S_A$ is chosen arbitrarily. Hereby $\ev_x\colon\rx_1\to\R$ denotes the evaluation in $x\in\R^n$. Finally, after a
suitable scaling we can even assume $\ps(1)=1$.

Now setting $x:=(\ps({\tt x}_1),\dots,\ps({\tt x}_n))\in\R^n$, we have $\ps=\ev_x$. So $\ps(C_A)\subseteq\R\nn$ means exactly that
$A(x)\succeq0$, i.e., $x\in S_A$. At the same time $f(x)=\ps(f)<0$ as desired.
\end{proof}

\subsection{Certificates for low dimensionality of spectrahedra}\label{subs:lowdim}

The problems with the standard duality theory for SDP thus arise when one deals with spectrahedra having empty interior.
Every convex set with empty interior is contained in an affine hyperplane. The basic idea is now to code the search for such an affine hyperplane
into the dual SDP and to replace equality in the constraint $\ell-a=\tr(AS)$ of \hyperref[sdp:d]{(D)}  by congruence modulo the linear
polynomial $f\in\R\cx_1$ defining the affine hyperplane. 
However, this raises  several issues:

First, $S_A$ might have codimension bigger than one in $\R^n$. This will be resolved by iterating the search up to $n$ times.

Second, we do not see any possibility to encode the search for the linear polynomial $f$ directly into an SDP. What we can implement is the search
for a non-zero \emph{quadratic} sos-polynomial $q$ together with a certificate of $S_A\subseteq\{q=0\}$. Note that $\{q=0\}$ is a proper affine subspace
of $\R^n$. It would be best to find a $q$ such that $\{q=0\}$ is the affine hull of $S_A$ since then we could actually avoid the $n$-fold iteration just
mentioned. However, as demonstrated in Example \ref{ex:new} below,  this is in general not possible.

Third, we need to carefully implement congruence modulo linear polynomials $f$ vanishing on $\{q=0\}$. This will be dealt with by
using the radical ideal from real algebraic geometry together with Schur complements.

\smallskip
We begin with a result which ensures that a suitable quadratic sos-polynomial $q$ can always be found. In fact, the following proposition says that
there exists such a $q$ which is actually a square. The statement is of interest in itself since it provides certificates for low-dimensionality of
spectrahedra. We need \emph{quadratic} (i.e., degree $\le 2$) sos-matrices for this.

\begin{proposition}\label{exsos}
For any linear pencil $A\in S\rx^{\al\times\al}_1$, the following are equivalent:
\begin{enumerate}[\rm(i)]
\item $S_A$ has empty interior;\label{exsos1}
\item There exists a non-zero linear polynomial\label{exsos2}
$f \in\rx_1$ and a quadratic sos-matrix $S\in S\rx^{\al\times\al}_2$ such that
\begin{equation}\label{eq:lowdim}
-f^2=\tr(AS).
\end{equation}
\end{enumerate}
\end{proposition}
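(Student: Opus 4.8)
The plan is to prove the two implications separately, with the substance lying in $(\ref{exsos1})\Rightarrow(\ref{exsos2})$.

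For the easy direction $(\ref{exsos2})\Rightarrow(\ref{exsos1})$: suppose $-f^2 = \tr(AS)$ with $S$ a nonzero quadratic sos-matrix and $f\in\rx_1$ nonzero. Write $S = \sum_i v_i v_i^*$ with $v_i\in\rx^\al$ of degree $\le 1$. Then for any $x\in S_A$ we have $A(x)\succeq 0$, hence $\tr(A(x)S(x)) = \sum_i v_i(x)^* A(x) v_i(x)\ge 0$, so $-f(x)^2\ge 0$, forcing $f(x)=0$. Thus $S_A\subseteq\{f=0\}$, a proper affine hyperplane (as $f$ is a nonzero linear polynomial), so $S_A$ has empty interior.

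For the main direction $(\ref{exsos1})\Rightarrow(\ref{exsos2})$: assume $S_A$ has empty interior. I would first reduce to the case where $S_A$ is nonempty; if $S_A=\emptyset$, then $A$ is infeasible. I'd like to use the preliminary lemmas: if $A$ is strongly infeasible, Lemma~\ref{sturm} gives $-1 = c + \sum_i u_i^* A u_i\in C_A$ with $c\ge 0$, and then $-1 = \tr(A\sum_i u_i u_i^*) - \text{(the constant $c$ absorbed)}$; more precisely $-1-c = \tr(AS_0)$ for the constant sos-matrix $S_0=\sum_i u_iu_i^*$, so adding $(c+1)$ times a suitable piece I can arrange $-f^2 = \tr(AS)$ with $f=1$ — one needs to be slightly careful since $c$ is only a nonnegative scalar, not obviously of the form $\tr(A\cdot)$, but actually $-1 = \tr(AS_0) + c$ rearranges to $-1-c\le \tr(AS_0)$... the cleanest route is: pick $f=1$ and note $-1 = \tr(A S_0) - c \le \tr(A S_0)$ won't directly work, so instead I would handle the infeasible case via $A$ weakly infeasible (Lemma~\ref{wind}, Lemma~\ref{kaffeebohne07}) and strongly infeasible (Lemma~\ref{sturm}) uniformly by the argument below for the boundary case, since when $A$ is infeasible, $S_A=\emptyset\subseteq\{f=0\}$ for any $f$, and one just needs *some* nonzero $f$ and sos $S$ with $-f^2=\tr(AS)$. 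In fact Lemma~\ref{kaffeebohne07} already gives, in the weakly infeasible case, $u_i\in\R^\al\setminus\{0\}$ with $\sum u_i^* A u_i = 0$; that produces $\tr(AS)=0$ with $S=\sum u_iu_i^*\ne 0$, but we need $-f^2$ with $f\ne 0$, not $0$. So the infeasible case still needs thought — perhaps the statement's content is really about feasible $S_A$ and the infeasible case is folded in by a separate short argument. I expect I should instead argue directly: since $S_A$ has empty interior, its affine hull $V$ is a proper affine subspace of $\R^n$; pick an affine-linear $f\ne 0$ vanishing on $V$ (hence on $S_A$). The real task is to manufacture the algebraic identity $-f^2 = \tr(AS)$ certifying $f\le 0$... but $f$ is not sign-definite, so what we actually certify is $-f^2\ge 0$ on $S_A$, i.e. $f=0$ on $S_A$. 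So I would apply a Positivstellensatz-type / Hahn--Banach separation argument to the linear polynomial, but since $-f^2$ is quadratic I instead separate using the cone $\{\tr(AS) : S \in S\rx^{\al\times\al}_2 \cap \Si^2\}$ of trace-forms with quadratic sos multipliers, show this cone is closed (via a properness argument as in the proof of Proposition~\ref{usual}, using homogeneity), and show that if no such identity existed then $-f^2$ could be strictly separated from this cone by a point evaluation at some $x$ with $A(x)\succeq 0$ and $f(x)\ne 0$, contradicting $S_A\subseteq\{f=0\}$ — wait, that needs $x\in S_A$, which is exactly what separation would deliver.

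So the skeleton is: (1) Let $V=\operatorname{aff}(S_A)$, of dimension $<n$; choose nonzero $f\in\rx_1$ with $f|_V=0$. (2) Consider the cone $K:=\{\tr(AS)\mid S\in S\rx^{\al\times\al}_2\cap\Si^2\}\subseteq\rx_2$. (3) Show $K$ is a closed convex cone: mimic the properness argument of Proposition~\ref{usual}, parametrizing $S=\sum_i v_iv_i^*$, $v_i\in\rx^\al_1$, modding out the subspace of $v$ with $v^*Av\equiv 0$ on $\R^n$ (equivalently $Av \equiv 0$, using that $S_A$ having nonempty... hmm, here $S_A$ has *empty* interior, so I must instead mod out by $v$ with $v^*A(x)v=0$ for all $x\in\operatorname{aff}(S_A)$ — the argument needs the affine hull, not the interior). (4) Suppose for contradiction $-f^2\notin K$; separate by a linear functional $\psi$ on $\rx_2$ with $\psi(K)\subseteq\R\nn$, $\psi(-f^2)<0$; normalize $\psi$ to an evaluation-like functional — here the subtlety is that $\psi$ on $\rx_2$ need not be a point evaluation, so I'd restrict to the argument that $\psi$ restricted to $\rx_1$, together with $\psi(q)\ge 0$ for all sos $q$ of degree $\le 2$, makes $\psi$ come from a point via a moment/flat-extension argument or more simply by noting the relevant Positivstellensatz only needs $\psi|_{\rx_1}$ and the constraint $\psi(u^*Au)\ge 0$. (5) Conclude there is $x\in S_A$ with $f(x)\ne 0$, contradicting step (1).

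\textbf{Main obstacle.} The hard part is step (3)–(4): establishing closedness of the cone $K$ of trace-forms with quadratic sos multipliers, and especially correctly running the separation/recovery argument at the level of quadratic (degree $2$) polynomials — an evaluation at a point is no longer automatic, so one must either work with the truncated moment problem for degree-$2$ functionals or cleverly reduce back to the linear setting of Proposition~\ref{usual} applied on the affine hull $\operatorname{aff}(S_A)$, where $A$ restricted has nonempty (relative) interior. I suspect the intended proof in fact restricts $A$ to the affine hull of $S_A$, invokes the (already-proved) standard duality Proposition~\ref{usual} there to get a certificate, and then lifts it back, absorbing the defining equations of the affine hull into the sos-matrix via Schur complements — which is exactly the mechanism advertised in Subsection~\ref{subs:lowdim}. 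Getting that lift to land as a clean identity $-f^2=\tr(AS)$ with $S$ honestly sos of degree $\le 2$ (rather than merely psd) is the delicate bookkeeping step.
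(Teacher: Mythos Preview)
Your proposal has a genuine gap, and it is precisely at the point you flag as the ``main obstacle'' --- but the problem is more fundamental than closedness or moment arguments. Your strategy for $(\ref{exsos1})\Rightarrow(\ref{exsos2})$ is: first pick an arbitrary nonzero $f\in\rx_1$ vanishing on the affine hull of $S_A$, then prove $-f^2\in K=\{\tr(AS)\mid S\text{ quadratic sos-matrix}\}$. This is false in general. Example~\ref{ex:new} in the paper is an explicit counterexample: there $S_A=\{(0,x_2,0)\mid x_2\ge0\}$ has codimension~$2$, the hyperplanes containing it are $\{x_1=0\}$ and $\{ax_1+x_3=0\}$ ($a\in\R$), and the identity $-f^2=\tr(AS)$ exists \emph{only} for $f={\tt x}_1$, not for any $f=a{\tt x}_1+{\tt x}_3$. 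So the separation argument you sketch, even if the cone were closed, would not produce a contradiction: for a bad choice of $f$, the element $-f^2$ genuinely lies outside $K$, and the separating functional simply witnesses that fact rather than forcing a point of $S_A$ with $f\ne0$.

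The paper's proof is not by separation at all; it is explicit and constructive. After reducing to the case where $Au\ne0$ for all nonzero $u\in\R^\al$, the strongly infeasible case is dispatched via Lemma~\ref{sturm}: from $-1=c+\sum_iu_i^*Au_i$ one rescales the $u_i$ by $(1+c)^{-1/2}$ to get $-1=\tr(AS)$ with $S=\sum_iu_iu_i^*$ constant, taking $f=1$. In the remaining (weakly infeasible or weakly feasible) case, Lemma~\ref{kaffeebohne07} supplies nonzero $u_1,\dots,u_k$ with $\sum_iu_i^*Au_i=0$. Two subcases: if some single $u$ has $u^*Au=0$, rotate so $u=e_1$, i.e.\ $\ell_{11}=0$; then $f:=\ell_{12}\ne0$ (since $Ae_1\ne0$), and with $v=[\tfrac12(-1-\ell_{22}),\ f,\ 0,\dots,0]^*$ one checks directly $v^*Av=-f^2$, so $S=vv^*$. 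Otherwise $k\ge2$ and every $u_i^*Au_i\ne0$; set $f:=u_1^*Au_1$, write $-f=f_1^2-f_2^2$ with $f_1=\tfrac12(1-f)$, $f_2=\tfrac12(-1-f)$, and take $S=f_1^2u_1u_1^*+f_2^2\sum_{i\ge2}u_iu_i^*$; a one-line computation gives $\tr(AS)=-f^2$. The point is that $f$ is \emph{read off from the pencil}, not chosen in advance.
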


\begin{proof}
From \eqref{exsos2} it follows that $-f^2\ge0$ on $S_A$, which implies $f=0$ on $S_A$.
So it is trivial that \eqref{exsos2} implies \eqref{exsos1}.

For the converse, suppose that $S_A$ has empty interior.
If there is $u\in\R^\al\setminus\{0\}$ such that $Au=0$ then, by an orthogonal change of
coordinates on $\R^\al$, we could assume that $u$ is the first standard basis vector $e_1$. But then we delete the first
column and the first row from $A$. We can iterate this and therefore assume from now on that there is no
$u\in\R^\al\setminus\{0\}$ with $Au=0$.

We first treat the easy case where $A$ is strongly infeasible. By Lemma~\ref{sturm}, there are
$c\in\R\nn$ and $u_i\in\R^\al$ with $-1-c=\sum_iu_i^*Au_i$. By scaling the $u_i$ we can assume $c=0$.
Setting $S:=\sum_iu_iu_i^*\in\sa$ and $f:=1$, we have $-f^2=-1=\sum_iu_i^*Au_i=\tr(AS)$ for the
constant sos-matrix $S$ and the constant non-zero linear polynomial $f$.

Now we assume that $A$ is weakly infeasible or feasible. In case that $A$ is feasible, it is clearly weakly feasible
since otherwise $S_A$ would have non-empty interior. Now Lemma \ref{kaffeebohne07} justifies the
following case distinction:

\smallskip
{\bf Case 1.} There is $u\in\R^\al\setminus\{0\}$ with $u^*Au=0$.

\noindent
Write $A=(\ell_{ij})_{1\le i,j\le \al}$. Again by an orthogonal change of coordinates on $\R^\al$, we can assume that $u=e_1$,
i.e., $\ell_{11}=0$. Moreover, we may assume $f:=\ell_{12}\neq0$ (since $Ae_1=Au\neq0$). Setting
$f':=\frac12(-1-\ell_{22})$,
$v:=[f'\ f\ 0\dots0]^*$ and $S:=vv^*$, we have
$$\tr(AS)=v^*Av=2 f'f\ell_{12}+f^2\ell_{22}=f^2(\ell_{22}+2f')=-f^2.$$

\smallskip
{\bf Case 2.} Case 1 does not apply but there are $k\ge 2$ and
$u_1,\dots,u_k\in\R^\al\setminus\{0\}$ such that $\sum_{i=1}^ku_i^*Au_i=0$.

\noindent
Here we set $f:=u_1^*Au_1\neq 0$ and write $-f=f_1^2-f_2^2$ where
$f_1:=\frac12(-f+1)\in\rxlin$ and $f_2:=\frac12(-f-1)\in\rxlin$.
Then we can use the quadratic sos-matrix
$S:=f_1^2u_1u_1^*+f_2^2\sum_{i=2}^ku_iu_i^*$ 
to get
\[
\begin{split}
\tr(AS) & = \tr \Big( A \big( f_1^2u_1u_1^*+f_2^2\sum_{i=2}^ku_iu_i^* \big) \Big )  =
f_1^2 u_1^* A u_1 + f_2^2 \sum_{i=2}^k u_i^* A u_i \\
& = f_1^2 u_1^* A u_1-f_2^2u_1^* A u_1= (f_1^2-f_2^2)u_1^* A u_1=-f^2.\qedhere
\end{split}
\]
\end{proof}

The certificate \eqref{eq:lowdim} of low-dimensionality exists for \emph{some} but in general \emph{not for every} affine hyperplane containing the
spectrahedron. 
We illustrate this in  Example \ref{ex:new} below,
where the spectrahedron has codimension two and is therefore contained in
infinitely many affine hyperplanes \emph{only one} of which allows for a certificate of the form \eqref{eq:lowdim}.

\subsection{Linear polynomials positive on spectrahedra}\label{subs:linpos}

We now carry out the slightly technical but straightforward iteration of Proposition \ref{exsos} announced
in Subsection \ref{subs:lowdim}, and combine
it with Proposition \ref{usual}. We get a new type of Positivstellensatz for linear polynomials on spectrahedra with bounded degree complexity.
In what follows, we shall use $(p_1,\ldots,p_r)$ to denote the ideal generated by $p_1,\ldots, p_r$.

\begin{theorem}[Positivstellensatz for linear polynomials on spectrahedra]\label{indsos}\mbox{}\\
Let $A\in S\rx^{\al\times\al}_1$ be a linear pencil and $f\in\rx_1$. Then $$f\ge0\text{ on }S_A$$ if and only if  there exist
$\ell_1,\dots,\ell_n\in\rx_1$, quadratic sos-matrices $S_1,\dots,S_n\in S\rx^{\al\times\al}_2$, a matrix $S\in S\R^{\al\times\al}_{\succeq0}$
and $c\ge0$ such that
\begin{align}
\ell_i^2+\tr(AS_i)&\in(\ell_1,\dots,\ell_{i-1})\quad\text{for}\quad i\in\{1,\dots,n\},\qquad\text{and}\label{indsos1}\\
f-c-\tr(AS)&\in(\ell_1,\dots,\ell_n).\label{indsos2}
\end{align}
\end{theorem}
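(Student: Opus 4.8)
The plan is to establish the two implications separately. The reverse one --- the displayed certificate forces $f\ge0$ on $S_A$ --- is the easy direction. I would fix $x\in S_A$, so that $A(x)\succeq0$; since each $S_i$ is an sos-matrix, $S_i(x)\succeq0$, and therefore $\tr(A(x)S_i(x))\ge0$ and likewise $\tr(A(x)S)\ge0$. Evaluating \eqref{indsos1} at $x$ and inducting on $i$: assuming $\ell_1(x)=\dots=\ell_{i-1}(x)=0$, the right-hand side of the $i$-th relation vanishes at $x$, so $\ell_i(x)^2+\tr(A(x)S_i(x))=0$ is a sum of two nonnegative reals, which forces $\ell_i(x)=0$. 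Hence $\ell_1(x)=\dots=\ell_n(x)=0$, and plugging this into \eqref{indsos2} gives $f(x)=c+\tr(A(x)S)\ge0$.

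For the forward direction I would induct on the number $n$ of variables, using Proposition \ref{exsos} to drop the dimension and Proposition \ref{usual} to terminate. In the base case $n=0$ the pencil $A$ is a constant matrix $A_0$ and $f$ a constant; if $A_0\succeq0$ take $(c,S)=(f,0)$, while if $A_0\not\succeq0$ then $A$ is strongly infeasible, Lemma \ref{sturm} supplies $c_0\ge0$, $S_0\succeq0$ with $-1=c_0+\tr(A_0S_0)$, and scaling this by $|f|$ (when $f<0$) produces $f=c+\tr(A_0S)$ with $c\ge0$, $S\succeq0$; there are no $\ell_i$ to exhibit. Let now $n\ge1$. If $S_A$ has nonempty interior, Proposition \ref{usual} gives $f\in C_A$, i.e. $f=c+\tr(AS)$ with $c\ge0$ and a constant $S\succeq0$, and the choices $\ell_i:=0$, $S_i:=0$ satisfy \eqref{indsos1}--\eqref{indsos2} trivially. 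If $S_A$ has empty interior, Proposition \ref{exsos} yields a non-zero $\ell_1\in\rx_1$ and a quadratic sos-matrix $S_1$ with $\ell_1^2+\tr(AS_1)=0$, which is exactly the $i=1$ instance of \eqref{indsos1} (and also shows $S_A\subseteq\{\ell_1=0\}$). If $\ell_1$ is a nonzero constant, then $(\ell_1)=\rx$ and $\ell_2=\dots=\ell_n:=0$, $S_2=\dots=S_n:=0$, $S:=0$, $c:=0$ finish. Otherwise $\ell_1$ is a non-constant linear form, and I would pass to new affine coordinates in which $\ell_1=\x_n$; this is harmless because conditions \eqref{indsos1}--\eqref{indsos2} are statements about ideal membership and traces and transform covariantly under affine coordinate changes, as do ``linear pencil'', ``quadratic sos-matrix'', and the hypothesis $f\ge0$ on $S_A$.

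With $\ell_1=\x_n$, set $\bar A:=A|_{\x_n=0}$, a linear pencil in the $n-1$ variables $\x_1,\dots,\x_{n-1}$, and $\bar f:=f|_{\x_n=0}$. Since $S_{\bar A}$ is identified with $S_A\cap\{\x_n=0\}\subseteq S_A$, we have $\bar f\ge0$ on $S_{\bar A}$, so the inductive hypothesis provides $\ell_2,\dots,\ell_n\in\R[\x_1,\dots,\x_{n-1}]_1$, quadratic sos-matrices $S_2,\dots,S_n$, a constant $S\succeq0$ and $c\ge0$ certifying $\bar f\ge0$ on $S_{\bar A}$ over $\R[\x_1,\dots,\x_{n-1}]$. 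Reading all of this inside $\rx$ and using the identity $A-\bar A=\x_nA_n=\ell_1A_n$ (and $f-\bar f\in(\ell_1)$), I would replace $\bar A$ by $A$ everywhere: each replacement changes a term only by an element of $(\ell_1)$, and since $\ell_1$ is the first generator, the relations $\ell_i^2+\tr(AS_i)\in(\ell_1,\dots,\ell_{i-1})$ and $f-c-\tr(AS)\in(\ell_1,\dots,\ell_n)$ fall out; together with the $i=1$ relation this is the desired certificate, which on undoing the coordinate change gives one for the original $A$ and $f$.

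The geometric content is entirely carried by Propositions \ref{exsos} and \ref{usual}, so no conceptual obstacle remains; the one thing to be careful about is the bookkeeping in the last step --- keeping the indexing of the $\ell_i$ straight after the codimension drop, checking that restriction and the coordinate change preserve degrees and the sum-of-squares property, and confirming that every discrepancy between $\bar A$ and $A$ lies in $(\ell_1)$. The normalization $\ell_1=\x_n$ is what makes this last point transparent, since then $A-\bar A$ is literally $\ell_1$ times a constant matrix.
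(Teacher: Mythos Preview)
Your argument is correct and follows essentially the same route as the paper: the easy direction by inductively showing each $\ell_i$ vanishes on $S_A$, and the forward direction by induction on $n$, splitting into the full-dimensional case (Proposition~\ref{usual}) and the low-dimensional case (Proposition~\ref{exsos}), then normalizing $\ell_1=\x_n$, restricting to $\x_n=0$, and lifting the inductive certificate back using $A-\bar A\in(\ell_1)$. The only cosmetic difference is in the base case $n=0$ with $A_0\not\succeq0$: the paper directly picks $u\in\R^\al$ with $u^*A_0u=f$ and sets $S=uu^*$, whereas you invoke Lemma~\ref{sturm} and rescale, which is equally valid.
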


\begin{proof}
We first prove that $f\ge0$ on $S_A$ in the presence of \eqref{indsos1} and \eqref{indsos2}.

The traces in \eqref{indsos1} and \eqref{indsos2} are obviously nonnegative on $S_A$.
Hence it is clear that constraint \eqref{indsos2} gives $f\ge0$ on $S_A$ if we show that $\ell_i$ vanishes on $S_A$
for all $i\in\{1,\dots,n\}$. Fix $i\in\{1,\dots,n\}$ and assume by induction that $\ell_1,\dots,\ell_{i-1}$ vanish on $S_A$. Then
\eqref{indsos1} implies $\ell_i^2+\tr(AS_i)$ vanishes on $S_A$ and therefore also $\ell_i$.

Conversely, suppose now that $f\ge0$ on $S_A$. We will obtain the data with properties \eqref{indsos1} and \eqref{indsos2}
by induction on the number of variables $n\in\N_0$.

To do the induction basis, suppose first that $n=0$. Then $f\ge0$ on $S_A$ just means that the real number $f$ is
nonnegative if $A\in S\R^{\al\times\al}$ is positive semidefinite. But if $f\ge0$, then it suffices to choose $c:=f\ge0$ and $S:=0$
to obtain \eqref{indsos2} with $n=0$, and the  condition \eqref{indsos1} is empty since $n=0$.
We now assume that $f<0$ and therefore $A\not\succeq0$. Then we choose $u\in\R^\al$ with
$u^*Au=f$. Setting $S:=uu^*\in S\R^{\al\times\al}_{\succeq0}$ and $c:=0$, we have $$f-c-\tr(AS)=f-u^*Au=f-f=0,$$ as required.

For the induction step, we now suppose that $n\in\N$ and that we know already how to find the required data for linear
pencils in $n-1$ variables. We distinguish two cases and will use the induction hypothesis only in the second one.

{\bf Case 1.} $S_A$ contains an interior point.

\noindent
In this case, we set all $\ell_i$ and $S_i$ to zero so that \eqref{indsos1} is trivially satisfied. Property
\eqref{indsos2} can be fulfilled by Proposition \ref{usual}.

{\bf Case 2.} The interior of $S_A$ is empty.

\noindent
In this case, we apply
Proposition~\ref{exsos} to obtain $0\neq\ell_1\in\rx_1$ and a quadratic sos-matrix $S_1\in S\rx^{\al\times \al}$ with
\begin{equation}\label{anfang}
\ell_1^2+\tr(AS_1)=0.
\end{equation}
The case where $\ell_1$ is constant is trivial. In fact, in this case we can choose all remaining data being zero since
$(\ell_1,\dots,\ell_i)=(\ell_1)=\rx$ for all $i\in\{1,\dots,n\}$.

From now on we therefore assume $\ell_1$ to be non-constant. But then the reader easily checks that there is no harm
carrying out an affine linear variable transformation which allows us to assume $\ell_1={\tt x}_n$.
We then apply the induction hypothesis to the linear pencil $A':=A({\tt x}_1,\dots,{\tt x}_{n-1},0)$ and the linear polynomial
$f':=f({\tt x}_1,\dots,{\tt x}_{n-1},0)$ in $n-1$ variables to obtain
$\ell_2,\dots,\ell_n\in\rx_1$, quadratic sos-matrices $S_2,\dots,S_n\in S\rx^{\al\times \al}_2$, a matrix $S\in \sapsd$ and 
a constant $c\ge0$ such that
\begin{align}
\ell_i^2+\tr(A'S_i)&\in(\ell_2,\dots,\ell_{i-1})\quad\text{for}\quad i\in\{2,\dots,n\}\qquad\text{and}\label{isos1}\\
f'-c-\tr(A'S)&\in(\ell_2,\dots,\ell_n).\label{isos2}
\end{align}
Noting that both $f-f'$ and $\tr(AS_i)-\tr(A'S_i)=\tr((A-A')S_i)$ are contained in the ideal $({\tt x}_n)=(\ell_1)$,
we see that \eqref{isos1} together with \eqref{anfang}
implies \eqref{indsos1}. In the same manner, \eqref{isos2} yields \eqref{indsos2}.
\end{proof}

\subsection{Constructing  SDPs for sums of squares problems}\label{subs:gram}

The (coefficient tuples of) sos-polynomials in $\rx$ of bounded degree 
form \emph{a projection of} a spectrahedron.
In other words, the condition of being (the coefficient tuple of) an sos-polynomial in $\rx$ of bounded degree can be expressed with an LMI
by means of additional variables. This is the well-known \emph{Gram matrix method} \cite{lau09,ma08}. As noted by Kojima \cite{ko03} and nicely
described by Hol and Scherer \cite{sh06}, the Gram matrix method extends easily to sos-\emph{matrices} (see
Example \eqref{wesuck} below).

\subsection{Real radical computations}\label{subs:realrad}

Let $A\in S\rx^{\al\times \al}_1$ be a linear pencil and $q\in\rx_2$ a (quadratic) sos-polynomial such that
$-q=\tr(AS)$ for some (quadratic) sos-matrix $S$ like in \eqref{eq:lowdim} above. In order to resolve the third issue mentioned
in Subsection \ref{subs:lowdim}, we would like to get our hands on (cubic) polynomials vanishing on $\{q=0\}$. That is, we want to
implement the ideals appearing in \eqref{indsos1} and \eqref{indsos2} in an SDP.

By the Real Nullstellensatz \cite{bcr98,ma08,pd01}, each polynomial vanishing on the real zero set $\{q=0\}$ of $q$
lies in $\rrad{(q)}$. This gives us a strategy of how to find the cubic polynomials vanishing on
$\{q=0\}$, cf. Proposition \ref{matrad} and Lemma \ref{vepaco} below. The Real Nullstellensatz plays only a motivating role for us;
we only use its trivial converse: Each element of $\rrad{(q)}$ vanishes on $\{q=0\}$.

The central question is  how to model the search for elements in the real radical ideal using SDP. The key to this will be to represent polynomials
by matrices as is done in the Gram matrix method mentioned in Section 
\ref{subs:gram}. For this we  introduce some notation.

For each $d\in\N_0$, let $\s d:=\dim\rx_d=\binom{d+n}n$ denote the number of monomials of degree at most $d$ in $n$ variables
and $\vecx d\in\rx^{\s d}$ the column vector
$$\vecx d:=\begin{bmatrix}1&{\tt x}_1&{\tt x}_2&\dots&{\tt x}_n&{\tt x}_1^2&{\tt x}_1{\tt x}_2&\dots&\dots&{\tt x}_n^d\end{bmatrix}^*$$
consisting of these monomials ordered first with respect to the degree and then lexicographic.

The following proposition shows how to find elements of degree at most $d+e$ (represented by a matrix $W$) in the real radical
$\cI:=\rrad{(q)}$ of the ideal generated by a polynomial $q\in\rx_{2d}$
(represented by a symmetric matrix $U$, i.e., $q=\vecx d^*U\vecx d$). We will later use it with $d=1$ and $e=2$ since $q$ will be
quadratic and we will be interested in  cubic polynomials in $\cI$.
Note that $$U\succeq W^*W\iff\begin{bmatrix}I&W\\W^*&U\end{bmatrix}\succeq0$$
by the method of Schur complements.

\begin{proposition}\label{matrad}
Let $d,e\in\N_0$,  let $\cI$ be a real radical ideal of $\rx$ and $U\in S\R^{s(d)\times s(d)}$ be such that
$\vecx d^*U\vecx d\in\cI$.  Suppose $W\in\R^{s(e)\times s(d)}$ with $U\succeq W^*W$. Then $\vecx e^*W\vecx d\in\cI$.
\end{proposition}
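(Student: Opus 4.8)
The plan is to convert the matrix inequality $U\succeq W^*W$ into a sum-of-squares identity for the polynomial $\vecx d^*U\vecx d$, and then to invoke the defining property of a real radical ideal to conclude that each square occurring in that identity — in particular each entry of the vector $W\vecx d$ — already lies in $\cI$.

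First I would factor the positive semidefinite matrix $U-W^*W$ as $V^*V$ for some real matrix $V$ (for instance, if $U-W^*W=Q\Lambda Q^*$ is a spectral decomposition with $\Lambda\succeq0$ diagonal, take $V:=\Lambda^{1/2}Q^*$). Writing $p_i\in\rx$ for the entries of the column vector $W\vecx d$ and $q_j\in\rx$ for the entries of $V\vecx d$, this yields
$$\vecx d^*U\vecx d=(W\vecx d)^*(W\vecx d)+(V\vecx d)^*(V\vecx d)=\sum_i p_i^2+\sum_j q_j^2,$$
which by hypothesis is an element of $\cI$.

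Next I would use that $\cI$ is real radical. Fixing an index $i_0$, we have $p_{i_0}^2+s\in\cI$ with $s:=\sum_{i\neq i_0}p_i^2+\sum_j q_j^2\in\Si^2\cap\rx$ an sos-polynomial, so the definition of $\rrad\cI$ (applied with $k=1$) gives $p_{i_0}\in\rrad\cI=\cI$. Hence every entry $p_i$ of $W\vecx d$ lies in $\cI$.

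Finally, since $\cI$ is an ideal, $\vecx e^*W\vecx d=\sum_i(\vecx e)_i\,p_i$ is an $\rx$-linear combination of the $p_i\in\cI$ and therefore belongs to $\cI$, as desired. The single point at which the hypotheses are used essentially — and hence what I would regard as the heart of the argument — is the elementary fact that a real radical ideal contains every summand of any finite sum of squares that it contains; the passage from $U\succeq W^*W$ to the displayed sos-identity via a matrix square root (equivalently, a Schur complement) is routine and presents no real difficulty.
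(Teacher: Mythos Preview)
Your proof is correct and follows essentially the same approach as the paper: factor $U-W^*W$ as a matrix square $B^*B$ (your $V^*V$), expand $\vecx d^*U\vecx d$ as a sum of squares of the entries of $W\vecx d$ and $B\vecx d$, invoke the real radical hypothesis to put each entry $p_i$ of $W\vecx d$ into $\cI$, and conclude via the ideal property. The only cosmetic difference is that you spell out the spectral construction of the square root and the application of the real radical definition a bit more explicitly.
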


\begin{proof}
Since $U-W^*W$ is positive semidefinite, we find $B\in\R^{s(d)\times s(d)}$ with
$U-W^*W=B^*B$.
Now let $p_i\in\rx$ denote the $i$-th entry of $W\vecx d$ and $q_j$ the $j$-th entry of $B\vecx d$.
From
\begin{align*}
p_1^2+\dots+p_{s(e)}^2+q_1^2+\dots+q_{s(d)}^2&=(W\vecx d)^*W\vecx d+(B\vecx d)^*B\vecx d\\
&=\vecx d^*(W^*W+B^*B)\vecx d=\vecx d^*U\vecx d\in\cI
\end{align*}
it follows that $p_1,\dots,p_{s(e)}\in\cI$ since $\cI$ is real radical. Now
$$\vecx e^*W\vecx d=\vecx e^*[p_1\dots p_{s(e)}]^*=[p_1\dots p_{s(e)}]\vecx e\in\cI$$
since $\cI$ is an ideal.
\end{proof}

The following lemma is a weak converse to Proposition~\ref{matrad}. Its proof relies heavily
on the fact that only linear and quadratic polynomials are involved.

\begin{lemma}\label{vepaco}
Let $\ell_1,\dots,\ell_t\in\rxlin$, and $q_1,\dots,q_t\in\rx_2$. Suppose that $U\in S\R^{s(1)\times s(1)}$  satisfies
\beq\label{eqU}
\vecx 1^*U\vecx 1=\ell_1^2+\dots+\ell_t^2.
\eeq
Then there exists $\la>0$ and $W\in\R^{s(2)\times s(1)}$ satisfying $\la U\succeq W^*W$ and
$$\vecx 2^*W\vecx 1=\ell_1q_1+\dots+\ell_t q_t.$$
\end{lemma}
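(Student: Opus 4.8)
The goal is to produce a matrix $W$ of size $s(2)\times s(1)$ (so that $\vecx 2^*W\vecx 1$ is a list of polynomials of the right shape) whose ``action'' on $\vecx 1$ reproduces the polynomial $\sum_k \ell_k q_k$, while controlling $W^*W$ by a scalar multiple of $U$. The natural first move is to \emph{linearize} each product $\ell_k q_k$ at the level of coefficient vectors: write $\ell_k = c_k^*\vecx 1$ for a coefficient vector $c_k\in\R^{s(1)}$, and write $q_k = \vecx 1^* Q_k \vecx 1$ (or, more conveniently, $q_k$ as $m_k^*\vecx 2$ for a coefficient vector $m_k\in\R^{s(2)}$, since $\deg q_k\le 2$). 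Then $\ell_k q_k = (c_k^*\vecx 1)\,(m_k^*\vecx 2) = \vecx 2^*\,(m_k c_k^*)\,\vecx 1$ as a polynomial identity, so the matrix $W_0 := \sum_{k=1}^t m_k c_k^*$ satisfies $\vecx 2^* W_0 \vecx 1 = \sum_k \ell_k q_k$ exactly. This gives a candidate $W$; the only remaining task is to upgrade it to one satisfying $\lambda U \succeq W^*W$ for some $\lambda>0$, \emph{without} changing the polynomial $\vecx 2^* W\vecx 1$.

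\textbf{Key step: adjusting $W$ modulo the syzygies of $\vecx 1$.} The map $W\mapsto \vecx 2^* W\vecx 1$ has a kernel: $W$ can be modified by adding anything of the form $N$ with $N\vecx 1 = 0$ as a polynomial vector (equivalently, each row of $N$ is a linear syzygy of the monomial vector $\vecx 1 = [1,\x_1,\dots,\x_n]^*$). Since the entries of $\vecx 1$ are linearly independent over $\R$, there are \emph{no} nonzero linear syzygies — the components of $\vecx 1$ are a basis of $\rx_1$ — so in fact the kernel is trivial and $W_0$ is the \emph{unique} matrix with $\vecx 2^* W_0\vecx 1 = \sum_k\ell_k q_k$. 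Hence I cannot adjust $W$ at all; instead the inequality $\lambda U\succeq W_0^*W_0$ must be deduced directly. Here I use hypothesis \eqref{eqU}: it says $\vecx 1^*U\vecx 1 = \sum_{k}\ell_k^2 = \sum_k (c_k^*\vecx 1)^2 = \vecx 1^*\bigl(\sum_k c_k c_k^*\bigr)\vecx 1$, and again by linear independence of the monomials in $\vecx 1$ this forces the \emph{matrix} identity $U = \sum_{k=1}^t c_k c_k^*$. Now $W_0^*W_0 = \bigl(\sum_j c_j m_j^*\bigr)\bigl(\sum_k m_k c_k^*\bigr) = \sum_{j,k} (m_j^*m_k)\, c_j c_k^*$. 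Let $M:=(m_j^*m_k)_{j,k}\in S\R^{t\times t}$ be the Gram matrix of the $m_k$'s and let $C := [c_1\ \cdots\ c_t]\in\R^{s(1)\times t}$, so that $U = CC^*$ and $W_0^*W_0 = CMC^*$. Since $M\preceq \|M\|\,I_t$ with $\lambda := \|M\|$ (the operator norm, or any upper bound such as $\sum_k\|m_k\|^2 = \sum_k$ of the sums of squares of coefficients of $q_k$), we get $W_0^*W_0 = CMC^* \preceq \lambda\, CC^* = \lambda U$, which is exactly the required $\lambda U\succeq W^*W$ with $W:=W_0$. If $M=0$ then all $q_k$ contributing are zero and one takes any $\lambda>0$; otherwise $\lambda=\|M\|>0$.

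\textbf{Expected main obstacle.} The computation above is essentially forced once one commits to the linearization $\ell_k q_k = \vecx 2^*(m_k c_k^*)\vecx 1$; the only genuinely delicate point is the \emph{bookkeeping of monomial orders}, namely checking that the coefficient vector $m_k$ of $q_k\in\rx_2$ really does pair with $\vecx 2$ (padding by zeros in the degree-$2$ slots is automatic since $\deg q_k\le 2$), and that the product of a degree-$\le1$ monomial with a degree-$\le1$ monomial lands among the entries of $\vecx 2$ — this is where the specific ordering ``first by degree, then lexicographic'' must be used, but it is purely a matter of consistent indexing, not a real difficulty. The other point worth stating cleanly is the repeated appeal to linear independence of monomials to pass from polynomial identities in $\vecx 1$ to matrix identities (for $U$) and to conclude uniqueness of $W$; this is why the lemma is restricted to linear and quadratic data, as the remark preceding it warns. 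No separability, closedness, or separation arguments are needed here — it is pure linear algebra plus one operator-norm bound.
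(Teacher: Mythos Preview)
Your construction of $W$ is exactly the paper's: writing $\ell_k=c_k^*\vecx1$ and $q_k=m_k^*\vecx2$, the matrix $W_0=\sum_k m_kc_k^*$ is the same $W$ the paper defines via $W\vecx1=\sum_k\ell_k m_k$. Your bound $W_0^*W_0=CMC^*\preceq\|M\|\,CC^*=\lambda U$ (using $U=CC^*$, which follows since $A\mapsto\vecx1^*A\vecx1$ is injective on \emph{symmetric} matrices) is correct and in fact a bit cleaner than the paper's route, which instead evaluates $\vecx1^*W^*W\vecx1$ at points $x\in\R^n$, bounds via an AM--GM step by $\lambda\sum_k\ell_k(x)^2$, and then invokes homogeneity and continuity to pass from vectors $[1,x]^*$ to all of $\R^{s(1)}$.

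One small correction that does not affect your argument: your claim that $W_0$ is the \emph{unique} matrix with $\vecx2^*W_0\vecx1=\sum_k\ell_kq_k$ is false. The kernel of $W\mapsto\vecx2^*W\vecx1$ is $\{N:\vecx2^*N\vecx1=0\}$, not $\{N:N\vecx1=0\}$; the former is genuinely nonzero (e.g.\ for $n=1$, take $N$ with $N_{12}=-N_{21}=1$ and all other entries zero). Fortunately you never use uniqueness --- your matrix inequality is proved for the specific $W_0$ you wrote down --- so the proof stands.
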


\begin{proof}
Note that the $U$ satisfying \eqref{eqU} is unique and hence positive semidefinite.
Suppose that at least one $q_i\neq0$ (otherwise take $W=0$).
Choose column vectors $c_i\in\R^{s(2)}$ such that $c_i^*\vecx2=q_i$.
Now let $W\in\R^{s(2)\times s(1)}$ be the matrix defined by $W\vecx1=\sum_{i=1}^t\ell_ic_i$, so that
$$\vecx2^*W\vecx1=\sum_{i=1}^t\ell_i\vecx2^*c_i=\sum_{i=1}^t\ell_ic_i^*\vecx2=
\sum_{i=1}^t\ell_iq_i.$$
Moreover, we get
$$\vecx1^*W^*W\vecx1=(W\vecx1)^*W\vecx1=\sum_{i,j=1}^t(\ell_ic_i)^*(\ell_jc_j),$$
and therefore for all $x\in\R^n$,
\begin{align*}
\begin{bmatrix}1&x_1&\dots&x_n\end{bmatrix}W^*W\begin{bmatrix}1\\x_1\\\vdots\\x_n\end{bmatrix}
&=\sum_{i,j=1}^t(\ell_i(x)c_i)^*(\ell_j(x)c_j)\\
&\le\frac12\sum_{i,j=1}^t \big((\ell_i(x)c_i)^*(\ell_i(x)c_i)+(\ell_j(x)c_j)^*(\ell_j(x)c_j)\big)\\
&= t\sum_{i=1}^t(\ell_i(x)c_i)^*(\ell_i(x)c_i)\le\la\sum_{i=1}^t\ell_i(x)^2,
\end{align*}
where we set $\la:=t\sum_{i=1}^tc_i^*c_i>0$. Therefore
$$\begin{bmatrix}1&x_1&\dots&x_n\end{bmatrix}(\la U-W^*W)\begin{bmatrix}1&x_1&\dots&x_n\end{bmatrix}^*\ge0$$
for all $x\in\R^n$. By homogeneity and continuity this implies $y^*(\la U-W^*W)y\ge0$ for all
$y\in\R^{s(1)}$, i.e., $\la U\succeq W^*W$.
\end{proof}

\subsection{A new exact duality theory for SDP}\label{subs:newdual}

Given an SDP of the form \hyperref[sdp:p]{(P)} described in Subsection~\ref{subs:standard}, the following is what we call its
\emph{sums of squares dual}:
$$
\begin{array}[t]{rll}
{\rm (D^{\rm sos})} \quad \label{sdp:dsos} 
\text{maximize}&a\\
\text{subject to}&S\in\sapsd, 
\ a\in\R\\
&S_1,\dots,S_n\in S\rx^{\al\times\al}_2\text{ quadratic sos-matrices\!}\!\\
&U_1,\dots,U_n\in S\R^{s(1)\times s(1)}\\
&W_1,\dots,W_n\in\R^{s(2)\times s(1)}\\
&\vecx1^*U_i\vecx1+\vecx2^*W_{i-1}\vecx1+\tr(AS_i)=0&\!\!(i\in\{1,\dots,n\})\\
&U_i\succeq W_i^*W_i&\!\!(i\in\{1,\dots,n\})\\
&\ell-a+\vecx2^*W_n\vecx1-\tr(AS)=0,
\end{array}
$$
where $W_0:=0\in\R^{s(2)\times s(1)}$.

\begin{remark}
Just like Ramana's extended Lagrange-Slater dual \cite{ra97},
\hyperref[sdp:dsos]{${\rm (D^{\rm sos})} $}
 can be written down in polynomial time (and hence has polynomial size)
in the bit size of the primal (assuming the latter has rational coefficients) and it guarantees that strong duality (i.e., weak duality,
zero gap and dual attainment) always holds.
Similarly, the facial reduction \cite{bw81,tw} gives rise to a good duality theory of SDP. We refer the reader to
\cite{Pat} for a unified treatment of these two constructions.
\end{remark}

As mentioned in Section \ref{subs:gram}, the quadratic sos-matrices can easily be modeled by SDP constraints using the
Gram matrix method, and the polynomial identities can be written as linear equations by comparing coefficients.
The $S_i$ serve to produce negated quadratic sos-polynomials vanishing on $S_A$ (cf. Proposition \ref{exsos}) which are captured by the
matrices $U_i$. From this, cubics vanishing on $S_A$ are produced (cf. Subsection \ref{subs:realrad}) and represented by the matrices
$W_i$. These cubics serve to implement the congruence modulo the ideals from \eqref{indsos1} and \eqref{indsos2}. Then the entire procedure
is iterated $n$ times.
We present an explicit example in Section \ref{subs:ex}.

Just as Proposition \ref{usual} corresponds to the standard SDP duality, Theorem \ref{sosram} 
below translates into the strong duality for the sums of squares
dual \hyperref[sdp:dsos]{${\rm (D^{\rm sos})} $}. Before we come to it, we need a folk lemma,  well-known from the theory of Gröbner bases.

\begin{lemma}\label{folk}
Suppose $d\in\N$, $f\in\rx_d$ and $\ell_1,\dots,\ell_t\in\rx_1$ are linear polynomials such that
$f\in(\ell_1,\dots,\ell_t)$. Then at least one of the following is true:
\begin{enumerate}[\rm(a)]
\item there exist $p_1,\dots,p_t\in\rx_{d-1}$ such that $f=p_1\ell_1+\dots+p_t\ell_t$;
\item there are $\la_1,\dots,\la_t\in\R$ such that $\la_1\ell_1+\dots+\la_t\ell_t=1$.
\end{enumerate}
\end{lemma}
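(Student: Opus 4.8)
The plan is to reduce the claim to elementary linear algebra over the ground field after choosing a convenient monomial ordering. First I would fix the graded lexicographic order on the monomials of $\rx$ and consider the leading terms $\mathrm{LT}(\ell_1),\dots,\mathrm{LT}(\ell_t)$ of the linear polynomials $\ell_1,\dots,\ell_t$; each is either a variable $\x_{j_i}$ (times a nonzero scalar) or, in the degenerate case, the constant $1$. The key dichotomy is whether some nonzero $\R$-linear combination of the $\ell_i$ is the constant $1$: this is exactly the condition that the span of $\{\ell_1,\dots,\ell_t\}$ in $\rx_1$ contains a nonzero constant, i.e.\ that the images of the $\ell_i$ in $\rx_1/\R$ do not span a subspace avoiding... more precisely, it is the condition that $1\in\Span_\R(\ell_1,\dots,\ell_t)$. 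So I would split into: \textbf{Case (b)}, where $1\in\Span_\R(\ell_1,\dots,\ell_t)$, in which (b) holds by definition and we are done; and \textbf{Case (a)}, where $\Span_\R(\ell_1,\dots,\ell_t)$ contains no nonzero constant.

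In Case (a), I would first replace $\ell_1,\dots,\ell_t$ by a basis of their $\R$-span, say $m_1,\dots,m_r$ with $r\le t$; since the ideal is unchanged and any division formula $f=\sum q_j m_j$ with $q_j\in\rx_{d-1}$ can be pushed back to a formula $f=\sum p_i\ell_i$ with $p_i\in\rx_{d-1}$ (expressing each $m_j$ as a linear combination of the $\ell_i$ and collecting), it suffices to prove (a) for a linearly independent family. Because no nonzero constant lies in the span, after an invertible real-linear change of the $m_j$'s I may assume, renaming variables, that $m_j=\x_j+c_j$ for $j=1,\dots,r$ with constants $c_j\in\R$ (the independent linear forms, none of them constant, can be put in this echelon form). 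Then a further affine substitution $\x_j\mapsto\x_j-c_j$ for $j\le r$ (which preserves $\rx_d$ and $\rx_{d-1}$) lets me assume $m_j=\x_j$. Now the statement is transparent: given $f\in\rx_d$ with $f\in(\x_1,\dots,\x_r)$, write $f$ in terms of monomials; every monomial of $f$ involving some $\x_j$ with $j\le r$ can be written as $\x_j$ times a monomial of degree $\le d-1$, and the sum of the monomials of $f$ involving none of $\x_1,\dots,\x_r$ is a polynomial in $\x_{r+1},\dots,\x_n$ that must vanish because $f$ lies in the ideal $(\x_1,\dots,\x_r)$ (evaluate at $\x_1=\dots=\x_r=0$). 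Collecting gives $f=\sum_{j=1}^r p_j\x_j$ with each $p_j\in\rx_{d-1}$, and undoing the affine change of variables and the basis change yields (a) with $p_i\in\rx_{d-1}$.

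I expect the main obstacle to be the bookkeeping in the reduction from a general generating family $\ell_1,\dots,\ell_t$ to a basis in echelon form, and in checking that all changes of variables stay within the degree bounds $\rx_d$ and $\rx_{d-1}$: affine-linear substitutions do preserve these spaces, but one must verify that re-expressing a basis element $m_j$ as $\sum\lambda_{ij}\ell_i$ and substituting does not raise degrees, which is immediate since the $\lambda_{ij}$ are scalars. A secondary subtlety is that $t$ may exceed the number of variables and some $\ell_i$ may be redundant or zero, so passing to a basis is genuinely needed; and one must be careful that the dichotomy really is between ``$1$ is in the $\R$-span'' and ``it is not,'' which is a clean mutually exclusive split. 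Once the normal form $m_j=\x_j$ is reached, the remaining argument is the standard fact that $(\x_1,\dots,\x_r)\cap\rx_d$ is exactly $\sum_{j=1}^r\x_j\rx_{d-1}$, proved by the evaluation-at-zero argument above. This completes the proof.
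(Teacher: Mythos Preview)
Your argument is correct and follows essentially the same route as the paper: assume (b) fails, bring the $\ell_i$ into echelon form via Gaussian elimination, and then read off the degree-bounded representation (the paper does this by induction on $t$ with a one-variable substitution, you do it in one shot after a full coordinate change). One small imprecision: after row-reduction and reordering variables you only get $m_j=\x_j+h_j$ with $h_j\in\R[\x_{r+1},\dots,\x_n]_1$ affine-linear, not $m_j=\x_j+c_j$ with $c_j\in\R$; but your affine substitution $\x_j\mapsto\x_j-h_j$ still works verbatim and preserves $\rx_d$ and $\rx_{d-1}$, so the rest of the argument goes through unchanged.
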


\begin{proof} 
Suppose that (b) is not fulfilled. Then we may assume by Gaussian elimination and after renumbering
the variables that
$\ell_i={\tt x}_i-\ell_i'$ where $\ell_i'\in\R[{\tt x}_{i+1},\dots,{\tt x}_n]_1$. 
We now proceed by induction on $t\in\N_0$ to prove (a).
For $t=0$, there is nothing to show.
Now let $t\in\N$ and suppose the lemma is already proved with $t$ replaced by $t-1$.
Write
$f=\sum_{|\al|\le d}a_\al\x^\al$ with $a_\al\in\R$. Setting $g:=f(\ell_1',{\tt x}_2,\dots,{\tt x}_n)$, we have
$$f-g=\sum_{\genfrac{}{}{0pt}{1}{|\al|\le d}{1\le\al_1}}
a_\al({\tt x}_1^{\al_1}-\ell_1^{\prime\al_1}){\tt x}_2^{\al_2}\dotsm {\tt x}_n^{\al_n}=p_1({\tt x}_1-\ell_1')=p_1\ell_1,$$
where 
\[p_1:=\sum_{\genfrac{}{}{0pt}{1}{|\al|\le d}{1\le\al_1}}
a_\al\left(\sum_{i=0}^{\al_1-1}{\tt x}_1^i\ell_1^{\prime\al_1-1-i}\right){\tt x}_2^{\al_2}\dotsm {\tt x}_n^{\al_n}\in\rx_{d-1}.\]
Moreover, $g\in(\ell_2,\dots,\ell_t)$ and therefore $g=p_2\ell_2+\dots+p_t\ell_t$ for some $p_2,\dots,p_t\in\rx_{d-1}$ by
the induction hypothesis. Now 
\[
f=(f-g)+g=p_1\ell_1+\dots+p_t\ell_t.
\qedhere\]
\end{proof}

\begin{theorem}[Sums of squares SDP duality]\label{sosram}
Let $A\in S\rx^{\al\times\al}_1$ be a linear pencil and $f\in\rx_1$. Then $$f\ge0\text{ on }S_A$$ if and only if there exist
quadratic sos-matrices
$S_1,\dots,S_n\in S\rx^{\al\times \al}_2$, matrices $U_1,\dots,U_n\in S\R^{s(1)\times s(1)}$, $W_1,\dots,W_n\in\R^{s(2)\times s(1)}$,
$S\in\sapsd$ and $c\in\R_{\ge0}$ such that
\begin{align}
&\vecx1^*U_i\vecx1+\vecx2^*W_{i-1}\vecx1+\tr(AS_i)=0&(i\in\{1,\dots,n\}),\label{sosram1}\\
&U_i\succeq W_i^*W_i&(i\in\{1,\dots,n\}),\label{sosram2}\\
&f-c+\vecx2^*W_n\vecx1-\tr(AS)=0,\label{sosram3}
\end{align}
where $W_0:=0\in\R^{s(2)\times s(1)}$.
\end{theorem}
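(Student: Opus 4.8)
The plan is to read Theorem~\ref{sosram} as the SDP-encodable refinement of the ideal-theoretic Positivstellensatz of Theorem~\ref{indsos}: the matrices $U_i$ will carry the squares $\ell_i^2$ of \eqref{indsos1}, and the matrices $W_i$ will carry (Gram representations of) the cofactors witnessing the ideal memberships, obtained from the $U_i$ via Lemma~\ref{vepaco}, with the degrees of the cofactors controlled by Lemma~\ref{folk}. I would prove the two implications separately.

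For ``if'', assume \eqref{sosram1}--\eqref{sosram3}. Since $U_i\succeq W_i^*W_i\succeq0$ by \eqref{sosram2}, the polynomial $\vecx1^*U_i\vecx1$ is a sum of squares, so $\geq0$ on $\R^n$, and $\vecx1^*(U_i-W_i^*W_i)\vecx1\geq0$ on $\R^n$. I would then show by induction on $i\in\{0,\dots,n\}$ that $\vecx2^*W_i\vecx1$ vanishes on $S_A$: the base case is $W_0=0$, and for the step, \eqref{sosram1} rewrites as $\vecx1^*U_i\vecx1=-\vecx2^*W_{i-1}\vecx1-\tr(AS_i)$, whose right-hand side is $\leq0$ on $S_A$ (the first term vanishes there by induction; $\tr(A(x)S_i(x))\geq0$ for $x\in S_A$ because $S_i$ is an sos-matrix), forcing $\vecx1^*U_i\vecx1=0$ on $S_A$; evaluating $0=\vecx1^*U_i\vecx1\geq|W_i\vecx1|^2\geq0$ pointwise on $S_A$ then shows every entry of $W_i\vecx1$, hence $\vecx2^*W_i\vecx1$, vanishes on $S_A$. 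Finally \eqref{sosram3} gives $f=c-\vecx2^*W_n\vecx1+\tr(AS)$, which on $S_A$ equals $c+\tr(AS)\geq0$.

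For ``only if'', I would start from the data $\ell_1,\dots,\ell_n\in\rx_1$, quadratic sos-matrices $S_1,\dots,S_n$, $S\in\sapsd$, $c\geq0$ produced by Theorem~\ref{indsos}. First normalize: whenever $1\in(\ell_1,\dots,\ell_{i-1})$ reset $\ell_i:=0,\ S_i:=0$, and reset $c:=0,\ S:=0$ if $1\in(\ell_1,\dots,\ell_n)$; one checks \eqref{indsos1}--\eqref{indsos2} persist. After this, each occurring ideal is either proper (and still contains $g_i:=\ell_i^2+\tr(AS_i)$) or comes with $g_i=0$, so Lemma~\ref{folk} yields cofactors $g_i=\sum_{j<i}p_j^{(i)}\ell_j$ and $f-c-\tr(AS)=\sum_{j\leq n}p_j^{(n+1)}\ell_j$ all lying in $\rx_2$. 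Let $\hat U_t\succeq0$ be the unique symmetric matrix with $\vecx1^*\hat U_t\vecx1=\ell_1^2+\dots+\ell_t^2$ (as in the proof of Lemma~\ref{vepaco}). The heart of the matter is to choose positive scalars $c_n,c_{n-1},\dots,c_1$ from the top down so that $U_i:=c_i\hat U_i$, $\tilde S_i:=c_iS_i$ do the job: Lemma~\ref{vepaco} applied to $\ell_1,\dots,\ell_n$ with quadratics $-p_j^{(n+1)}$ and $U=\hat U_n$ yields $\la_{n+1}>0$ and $W_n$ with $\la_{n+1}\hat U_n\succeq W_n^*W_n$ and $\vecx2^*W_n\vecx1=-(f-c-\tr(AS))$; set $c_n:=\max\{1,\la_{n+1}\}$; then inductively for $i=n,\dots,2$, Lemma~\ref{vepaco} applied to $\ell_1,\dots,\ell_{i-1}$ with quadratics $q_j^{(i)}:=c_i(p_j^{(i)}-\ell_j)$ and $U=\hat U_{i-1}$ yields $\la_i>0$ and $W_{i-1}$ with $\la_i\hat U_{i-1}\succeq W_{i-1}^*W_{i-1}$ and $\vecx2^*W_{i-1}\vecx1=\sum_{j<i}\ell_jq_j^{(i)}=-c_i(\ell_1^2+\dots+\ell_i^2)-c_i\tr(AS_i)$; set $c_{i-1}:=\max\{1,\la_i\}$, $W_0:=0$, $\tilde S:=S$, $\tilde c:=c$. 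Then \eqref{sosram1} is the polynomial identity $c_i(\ell_1^2+\dots+\ell_i^2)+\vecx2^*W_{i-1}\vecx1+\tr(A\tilde S_i)=0$ (for $i=1$ using $g_1=0$ and $W_0=0$), \eqref{sosram2} holds because $c_i\geq\la_{i+1}$ gives $U_i=c_i\hat U_i\succeq\la_{i+1}\hat U_i\succeq W_i^*W_i$, \eqref{sosram3} is immediate, and $\tilde S_i=c_iS_i$ is still a quadratic sos-matrix.

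I expect the real obstacle to be this last compatibility step: Lemma~\ref{vepaco} only delivers $W$ with $\la U\succeq W^*W$ for \emph{some} multiplier $\la>0$ that depends on the cofactors, so one cannot use the bare Gram matrices of the $\ell_i^2$ and must inflate them by a cascade of scalars whose required sizes surface only after running Lemma~\ref{vepaco} — which is exactly why the equations must be handled in decreasing order of $i$. The unit-ideal normalization is a second, minor point, needed only to keep every cofactor inside $\rx_2$ so that Lemma~\ref{vepaco} applies.
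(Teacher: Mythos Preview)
Your argument is correct and follows essentially the same route as the paper: both directions use the same ingredients (Theorem~\ref{indsos}, Lemma~\ref{folk}, Lemma~\ref{vepaco}) in the same roles, with $U_i$ encoding $\ell_1^2+\dots+\ell_i^2$ and the $W_i$ built from the degree-controlled cofactors, followed by a top-down cascade of scalars to reconcile the $\la$'s produced by Lemma~\ref{vepaco} with the inequality~\eqref{sosram2}. The only slip is a sign in your definition of $q_j^{(i)}$: with $q_j^{(i)}=c_i(p_j^{(i)}-\ell_j)$ one gets $\sum_{j<i}\ell_jq_j^{(i)}=c_i\big(g_i-\sum_{j<i}\ell_j^2\big)$, not $-c_i\big(\sum_{j\le i}\ell_j^2+\tr(AS_i)\big)$; taking $q_j^{(i)}:=-c_i(p_j^{(i)}+\ell_j)$ fixes this and the rest of your verification goes through unchanged.
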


\begin{proof} We first prove that existence of the above data implies $f\ge0$ on $S_A$.
All we will use about the traces appearing in \eqref{sosram1} and \eqref{sosram3} is that they are polynomials nonnegative on $S_A$.
Let $\cI$ denote the real radical ideal of all polynomials vanishing on $S_A$.
It is clear that \eqref{sosram3} gives $f\ge0$ on $S_A$ if we show that
$\vecx2^*W_n\vecx1\in\cI$. In fact, we prove by induction that $\vecx2 ^*W_i\vecx1\in\cI$
for all $i\in\{0,\dots,n\}$.

The case $i=0$ is trivial since $W_0=0$ by definition. Let $i\in\{1,\dots,n\}$ be given and suppose that $\vecx2^*W_{i-1}\vecx1\in\cI$.
Then \eqref{sosram1} shows $\vecx1^*U_i\vecx1\le0$ on $S_A$. On the other hand, \eqref{sosram2} implies in particular
$U_i\succeq0$ and therefore $\vecx1^*U_i\vecx1\ge0$ on $S_A$. Combining both, $\vecx1^*U_i\vecx1\in\cI$.
Now Proposition \ref{matrad}  implies $\vecx2^*W_i\vecx1\in\cI$ by \eqref{sosram2}.
This ends the induction and shows $f\ge0$ on $S_A$ as claimed.

Conversely, suppose now that $f\geq0$ on $S_A$. By Theorem \ref{indsos} and Lemma \ref{folk}, we can choose
$\ell_1,\dots,\ell_n\in\rx_1$, quadratic sos-matrices $S_1',\dots,S_n'\in S\rx^{\al\times\al}$, $S\in\sapsd$ and
$q_{ij}\in\rx_2$ ($1\le j\le i\le n$) such that
\begin{align}
\ell_1^2+\dots+\ell_i^2+\tr(AS_i')&=\sum_{j=1}^{i-1}q_{(i-1)j}\ell_j&(i\in\{1,\dots,n\})\qquad\text{and}
\label{prep1}\\
f-c-\tr(AS)&=\sum_{j=1}^nq_{nj}\ell_j.\label{prep2}
\end{align}
There are two little arguments involved in this: First, \eqref{indsos1} can trivially be rewritten as
$\ell_1^2+\dots+\ell_i^2+\tr(AS_i')\in(\ell_1,\dots,\ell_{i-1})$ for $i\in\{1,\dots,n\}$. Second, in Lemma \ref{folk} 
applied to
$\ell_1,\dots,\ell_{i-1}$
($i\in\{1,\dots,n+1\}$)
we might fall into case (b).
But  then we may set
$\ell_{i}=\dots=\ell_n=0$ and $S_{i}'=\dots=S_n'=S=0$.

Now define $U_i'\in S\R^{s(1)\times s(1)}$ by $\vecx1^*U_i'\vecx1=\ell_1^2+\dots+\ell_i^2$ for $i\in\{1,\dots,n\}$. Using Lemma \ref{vepaco},
we can then choose $\la>0$ and $W_1',\dots,W_n'\in\R^{s(2)\times s(1)}$ such that
\begin{equation}\label{cop}
\la U_i'\succeq W_i'^*W_i'
\end{equation}
and
$\vecx2^*W_i'\vecx1=-\sum_{j=1}^iq_{ij}\ell_j$ for $i\in\{1,\dots,n\}$. Setting $W_n:=W_n'$, equation \eqref{prep2}
becomes \eqref{sosram3}. Moreover, \eqref{prep1} can be rewritten as
\begin{equation}\label{rew}
\vecx1^*U_i'\vecx1+\vecx2^*W_{i-1}'\vecx1+\tr(AS_i')=0\qquad(i\in\{1,\dots,n\})
\end{equation}
To cope with the problem that $\la$ might be larger than $1$ in \eqref{cop}, we look for $\la_1,\dots,\la_n\in\R\pos$ such
that $U_i\succeq W_i^*W_i$ for all $i\in\{1,\dots,n\}$ if we define $U_i:=\la_iU_i'$ and $W_{i-1}:=\la_iW_{i-1}'$ for all
$i\in\{1,\dots,n\}$ (in particular $W_0=W_0'=0$). With this choice, the desired linear matrix inequality \eqref{sosram2} is now
equivalent to $\la_iU_i'\succeq\la_{i+1}^2W_i'^*W_i'$ for $i\in\{1,\dots,n-1\}$ and $\la_nU_n'\succeq W_n'^2$.
Looking at \eqref{cop}, we therefore see that any choice of the $\la_i$ satisfying $\la_i\ge\la\la_{i+1}^2$ for
$i\in\{1,\dots,n-1\}$ and $\la_n\ge\la$ ensures \eqref{sosram2}. Such a choice is clearly possible. Finally, equation
\eqref{rew} multiplied by $\la_i$ yields \eqref{sosram1} by setting $S_i:=\la_iS_i'$ for $i\in\{1,\dots,n\}$.
\end{proof}

\section{Positivity of polynomials on spectrahedra}\label{sec:pospoly}

In this section we present applications of the results presented in Section \ref{sec:sdp}.
We 
interpret Theorems \ref{indsos} and \ref{sosram} in the language of real algebraic geometry
in Subsection \ref{subs:revisit}, and 
prove 
a nonlinear Farkas' lemma for SDP, i.e., 
nonlinear algebraic certificates for infeasibility of an LMI,
in Subsection \ref{subs:degbound}.
These results use quadratic modules from real algebraic geometry, which we recall
in Subsection \ref{subs:qm}.
As a side product we obtain a hierarchy for infeasibility of LMIs, whose first stage coincides with
 strong infeasibility. Subsection \ref{subs:bs} contains certificates for boundedness of
 spectrahedra and a Putinar-Schm\"udgen-like Positivstellensatz
 for polynomials positive on bounded spectrahedra. 
Finally, the section concludes with two brief subsections containing examples illustrating our results, 
and an application to positive linear functionals.

\subsection{Quadratic module
associated to a linear pencil}\label{subs:qm}

Let $R$ be a (commutative unital) ring. 
A subset $M\subseteq R$ is called a \emph{quadratic module} in $R$ if it
contains $1$ and is closed under addition and multiplication with squares, i.e.,
$$1\in M,\quad M+M\subseteq M,\quad\text{and}\quad a^2M\subseteq M\text{\ for all $a\in R$},$$
see for example \cite{ma08}. The \emph{support} of $M$ is defined to be $\supp M:=M\cap (-M)$.
A quadratic module $M\subseteq R$ is called \emph{proper} if $-1\not\in M$. 
If $\frac12\in R$, then the identity
\begin{equation}\label{id4}
4a=(a+1)^2-(a-1)^2\qquad\text{for all $a\in R$},
\end{equation}
shows that $\supp M$ is an ideal and therefore any improper quadratic module $M$ equals $R$ (since $1$ is contained
in its support).

An LMI $A(x)\succeq0$ can be seen as the infinite family of simultaneous linear inequalities $u^*A(x)u\ge0$ ($u\in\R^\al$).
In optimization,
when dealing with families of linear inequalities, one 
often considers the convex cone generated by them
(cf.~$C_A$ in Subsection \ref{subs:weakly}). Real algebraic geometry handles 
\emph{arbitrary} polynomial inequalities and
uses the multiplicative structure of the polynomial ring. Thence
one considers more special types of convex cones, like 
quadratic modules.
One of the aims of this section is to show that it is advantageous to consider
quadratic modules for the study of LMIs. Since quadratic
modules in polynomial rings are infinite-dimensional convex cones, we will later on also consider certain finite-dimensional truncations of them, see Subsection
\ref{subs:degbound}.

\begin{definition}\label{def:scm}
Let $A$ be a linear pencil of size $\al$ in the variables $\x$. We introduce
\begin{align*}
M_A&:=\Big\{s+\sum_iv_i^*Av_i\mid s\in\sos,\ v_i\in\rx^\al\Big\}\\
&~=\Big\{s+\tr(AS)\mid s\in\R\cx\text{ sos-polynomial},\ S\in\R\cx^{\al\times\al}\text{ sos-matrix}\Big\}
\subseteq\R\ac,
\end{align*}
and call it the \emph{quadratic module}
associated to the linear pencil $A$.
\end{definition}

Note that for any linear pencil $A$, each element of $M_A$ is a polynomial nonnegative on the spectrahedron $S_A$.
In general, $M_A$ does not contain all polynomials nonnegative on $S_A$ 
(e.g. when $A$ is diagonal and
$\dim S_A\ge 3$ \cite{sc09};
another simple example is presented in Example \ref{new2} below).
For diagonal $A$, the quadratic module $M_A$ (and actually the convex cone $C_A$) contains however all \emph{linear} polynomials
nonnegative on the polyhedron $S_A$ by Farkas' lemma.
For non-diagonal linear pencils
$A$ even this can fail, see Example \ref{new2} below.
To certify nonnegativity of a linear polynomial on a spectrahedron, we therefore employ more involved algebraic certificates
(motivated by the real radical), cf.~Theorem \ref{indsos} and its SDP-implementable version Theorem \ref{sosram}.
These two theorems
yield algebraic certificates for
linear polynomials nonnegative on $S_A$. 
While they  have the advantage of being very well-behaved with respect to complexity
issues, their statements are somewhat technical. Leaving complexity issues aside, one can use 
them to deduce a cleaner algebraic characterization of linear polynomials nonnegative on $S_A$.
Indeed,
given $f\in\rx_1$ with $f\ge0$ on $S_A$, the certificates in Theorems \ref{indsos} and \ref{sosram} 
can be seen to be equivalent to  $f\in C_A+\rad{\supp M_A}$ by means of Prestel's theory of semiorderings.
Note that each element of $C_A+\rad{\supp M_A}$ is obviously nonnegative on $S_A$ since the elements of $\rad{\supp M_A}$ vanish on $S_A$.

Finally, this will allow us to come back to the quadratic module $M_A$. We will show that it contains each linear polynomial nonnegative
on $S_A$ \emph{after adding an arbitrarily small positive constant}, see Corollary \ref{cor:almostsat}.

\smallskip
In this subsection, basic familiarity with real algebraic geometry as presented e.g.~in \cite{bcr98,ma08,pd01} is needed.
The following proposition follows easily from Prestel's theory of semiorderings on a commutative ring, see for example \cite[1.4.6.1]{sc09}.

\begin{proposition}\label{prestel}
Let $M$ be a quadratic module in $\rx$. Then
$$\rad{\supp M}=\rrad{\supp M}=\bigcap\{\supp S\mid \text{$S$ semiordering of $\rx$}, M\subseteq S\}.$$
\end{proposition}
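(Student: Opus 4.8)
The plan is to establish the chain of equalities in Proposition \ref{prestel} by proving two inclusions that sandwich everything, using Prestel's theory of semiorderings together with the elementary fact (already recorded via identity \eqref{id4}) that $\supp M$ is an ideal. First I would fix a quadratic module $M\subseteq\rx$, set $I:=\supp M$, and observe the trivial inclusions: $\rad I\subseteq\rrad I$ is false in general, so instead I want to show $\rrad I\subseteq\rad I$ and $\rad I\subseteq\bigcap\{\supp S\mid S\text{ semiordering},\ M\subseteq S\}$, and finally close the loop with $\bigcap\{\supp S\mid\dots\}\subseteq\rrad I$. Actually it is cleaner to prove the ring of inclusions
\[
\rad I\ \subseteq\ \bigcap\{\supp S\mid S\text{ semiordering of }\rx,\ M\subseteq S\}\ \subseteq\ \rrad I\ \subseteq\ \rad I,
\]
so that all three sets coincide.

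For the first inclusion $\rad I\subseteq\bigcap\{\supp S\mid M\subseteq S\}$: any semiordering $S$ with $M\subseteq S$ has support $\supp S\supseteq\supp M=I$, and $\supp S$ is a \emph{real} ideal (a semiordering induces a semiordering, hence an ordering, on $\rx/\supp S$, which forces $\supp S$ to be real radical — this is standard Prestel theory, e.g.\ \cite[1.4.6.1]{sc09}). Since $\supp S$ is real radical and contains $I$, it contains $\rrad I\supseteq\rad I$; intersecting over all such $S$ gives the claim, and in fact this argument already shows $\rrad I\subseteq\bigcap\{\supp S\mid M\subseteq S\}$, which is one of the nontrivial containments.

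For the reverse inclusion $\bigcap\{\supp S\mid M\subseteq S\}\subseteq\rrad I$: this is the heart of the matter and is exactly where Prestel's representation theorem for quadratic modules/semiorderings enters. The key point is that if $f\notin\rrad I=\rrad{\supp M}$, then the quadratic module $M':=M+(-f^{2k}\cdot\sos)$ — or more precisely the quadratic module generated by $M$ and $-f$ in a suitable sense — is still proper, i.e.\ $-1\notin M'$; this is a standard consequence of the fact that $\rrad{\supp M}$ is precisely the obstruction to extending. Then by Prestel's theorem, every proper quadratic module is contained in a semiordering $S$, and one arranges that $M\subseteq S$ while $f\notin\supp S$ (because $-f\in S$ but $f\notin S$, or $f\in\supp S$ would contradict properness after the extension). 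Hence $f\notin\bigcap\{\supp S\mid M\subseteq S\}$, proving the contrapositive. The remaining inclusion $\rrad I\subseteq\rad I$ is false as stated — rather one uses $\rad I\subseteq\rrad I$ trivially and the already-established $\rrad I\subseteq\bigcap\subseteq$ (from the first inclusion applied with the roles arranged), so the three sets collapse; one shows $\rrad{\supp M}\subseteq\rad{\supp M}$ by noting $\supp M$ is real (as its quotient embeds into a product of orderings via semiorderings), and a real ideal equals its real radical and also its radical.

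The main obstacle I anticipate is getting the extension step exactly right: showing that $f\notin\rrad{\supp M}$ guarantees a semiordering $S\supseteq M$ with $f\notin\supp S$. This requires invoking the precise form of Prestel's theory — that the intersection of supports of all semiorderings containing a quadratic module $M$ equals $\rrad{\supp M}$, which is essentially the statement being proved, so care is needed to cite the correct primitive result from \cite{sc09} (the abstract Positivstellensatz for semiorderings, or Prestel's theorem that proper quadratic modules extend to semiorderings, combined with the characterization of supports of semiorderings as real prime — or real radical — ideals). Once the correct citation is pinned down, the rest is bookkeeping with the elementary facts that $\supp M$ is an ideal (identity \eqref{id4}), that supports of semiorderings are real radical ideals, and that a real ideal coincides with its radical.
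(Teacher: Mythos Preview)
The paper does not actually prove this proposition: it simply records that it ``follows easily from Prestel's theory of semiorderings'' and cites \cite[1.4.6.1]{sc09}. So there is no argument to compare against beyond that deferral, and your attempt to unpack the citation is already more than the paper does.

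Your sketch has the right architecture---a cycle of inclusions collapsing the three sets---but one passage is genuinely muddled. You write that $\rrad{\supp M}\subseteq\rad{\supp M}$ is ``false as stated'' and then try to recover it indirectly through semiorderings and an unsubstantiated claim that $\supp M$ is real. In fact this inclusion is a two-line consequence of $M$ being a quadratic module, and needs no semiordering machinery at all: if $f^{2k}+s\in\supp M$ with $s\in\Si^2\cap\rx$, then $-f^{2k}=s-(f^{2k}+s)\in M+\supp M\subseteq M$, while $f^{2k}\in\Si^2\subseteq M$; hence $f^{2k}\in\supp M$ and $f\in\rad{\supp M}$. With this in hand, the inclusions $\rad{\supp M}=\rrad{\supp M}\subseteq\bigcap\{\supp S\mid M\subseteq S\}$ are all easy (the last because $\supp S$ is a real ideal containing $\supp M$).

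The only substantive step is the reverse inclusion $\bigcap\{\supp S\mid M\subseteq S\}\subseteq\rad{\supp M}$, and that is precisely what the Prestel reference supplies. Your concern about circularity is legitimate but resolvable: the primitive input is the extension theorem that every proper quadratic module over a ring containing $\frac12$ lies in some semiordering, not the full support characterization. From $f\notin\rad{\supp M}$ one gets $-f^{2}\notin M$, and a Zorn argument on quadratic modules $N\supseteq M$ with $-f^{2k}\notin N$ for all $k$ produces a maximal such $N$, which one then checks is a semiordering with $f\notin\supp N$. That is the content hidden behind the citation; once you cite the extension theorem rather than the proposition itself, there is no circularity.
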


We explicitly extract the following consequence since this is exactly what is needed in the sequel.

\begin{lemma}\label{lem:this}
Let $M$ be a quadratic module in $\rx$. Then
\begin{equation}
(\rad{\supp M}-M)\cap M\subseteq\rad{\supp M}.\label{prestelcor1}
\end{equation}
\end{lemma}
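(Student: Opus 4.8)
The plan is to deduce Lemma \ref{lem:this} directly from Proposition \ref{prestel} by a short semiordering argument. Write $I:=\rad{\supp M}=\rrad{\supp M}$. By Proposition \ref{prestel}, $I=\bigcap\{\supp S\mid S\text{ semiordering of }\rx,\ M\subseteq S\}$, so to show that an element $g$ lies in $I$ it suffices to show $g\in\supp S$, i.e.\ $g\in S$ and $-g\in S$, for every semiordering $S\supseteq M$.

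So first I would take an arbitrary $g\in(I-M)\cap M$ and an arbitrary semiordering $S$ of $\rx$ with $M\subseteq S$. Writing $g=h-m$ with $h\in I$ and $m\in M$, I note $h\in I\subseteq\supp S$ (using $I\subseteq\supp S$ by Proposition \ref{prestel}), hence $-h\in S$; also $-m\in -M$. Since semiorderings are closed under addition, $-g=(-h)+m\in S$ (using $m\in M\subseteq S$), and separately $-g=-g\in S$ must be paired with $g\in S$: but $g\in M\subseteq S$ already gives $g\in S$. Combining $g\in S$ and $-g\in S$ yields $g\in\supp S$. Since $S$ was an arbitrary semiordering containing $M$, Proposition \ref{prestel} gives $g\in I=\rad{\supp M}$, which is exactly \eqref{prestelcor1}.

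Concretely:

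\begin{proof}
Write $I:=\rad{\supp M}$. By Proposition \ref{prestel}, $I=\rrad{\supp M}$ and
$$I=\bigcap\{\supp S\mid S\text{ a semiordering of }\rx\text{ with }M\subseteq S\}.$$
Let $g\in(I-M)\cap M$, say $g=h-m$ with $h\in I$ and $m\in M$. Fix a semiordering $S$ of $\rx$ with $M\subseteq S$. Then $g\in M\subseteq S$. On the other hand $h\in I\subseteq\supp S$, so $-h\in S$; and $m\in M\subseteq S$; hence $-g=(-h)+m\in S$ because $S$ is closed under addition. Thus $g\in S\cap(-S)=\supp S$. As $S$ was an arbitrary semiordering containing $M$, we conclude $g\in\bigcap\{\supp S\mid M\subseteq S\}=I=\rad{\supp M}$.
\end{proof}

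The only possible obstacle is making sure the cited facts about semiorderings are exactly those packaged in Proposition \ref{prestel} — namely that a semiordering $S$ is an additively closed set with $S\cup(-S)=\rx$ whose support is an ideal, so that $I\subseteq\supp S$ holds for every $S\supseteq M$ and the intersection formula is valid. Given Proposition \ref{prestel} as stated, no further input from Prestel's theory is needed and the argument is the two-line computation above; there is no real computational content, so this should be routine.
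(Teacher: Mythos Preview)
Your proof is correct and is essentially identical to the paper's own argument: both use Proposition~\ref{prestel} to reduce to checking $g\in\supp S$ for every semiordering $S\supseteq M$, then observe $g\in M\subseteq S$ and $-g=(-h)+m\in S$ from $h\in\supp S$ and $m\in M\subseteq S$. The only difference is notation.
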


\begin{proof}
To prove \eqref{prestelcor1},
suppose $p\in M$ can be written $p=g-q$ with $g\in\rad{\supp M}$ and $q\in M$. By Proposition \ref{prestel}, we have
to show that $p\in\supp S$ for each semiordering $S$ of $\rx$ with $M\subseteq S$. But if such $S$ is given, then
$g\in\supp S$ and therefore $p=g-q\in -S$ as well as $p\in M\subseteq S$. Hence $p\in\supp S$.
\end{proof}

\subsection{Linear polynomials positive on spectrahedra -- revisited}\label{subs:revisit}

With Lemma \ref{lem:this} at hand, we can now give a conceptual interpretation of the certificates appearing in Theorem \ref{indsos}, disregarding the 
complexity of the certificate.

\begin{proposition}\label{indsos++}
If $A\in S\rx^{\al\times\al}_1$ is a linear pencil,
$f,\ell_1,\dots,\ell_n\in\rx_1$ are linear polynomials, $S_1,\dots,S_n\in S\rx^{\al\times\al}_2$ are quadratic sos-matrices, $S\in\sapsd$
and $c\in\R_{\ge0}$ are such that \eqref{indsos1} and \eqref{indsos2} hold, then $f\in C_A+\rad{\supp M_A}$.
\end{proposition}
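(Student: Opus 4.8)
The plan is to extract from conditions \eqref{indsos1} and \eqref{indsos2} the statement that each $\ell_i$ lies in $\rad{\supp M_A}$, and then to conclude that $f$ differs from an element of $C_A$ by an element of that radical ideal. First I would set $M:=M_A$ and note that each trace term $\tr(AS_i)$ and $\tr(AS)$ belongs to $M_A$ (being of the form $\sum_j v_j^*Av_j$ for a quadratic sos-matrix, hence also to the polynomial cone $M_A$), while $\ell_i^2\in\sos\subseteq M_A$ as well. The key is to show $\ell_i\in\rad{\supp M_A}$ for all $i$, which I would do by induction on $i$ using Lemma \ref{lem:this}.

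For the induction, fix $i$ and assume $\ell_1,\dots,\ell_{i-1}\in\rad{\supp M_A}$. Condition \eqref{indsos1} says $\ell_i^2+\tr(AS_i)=\sum_{j<i}p_j\ell_j$ for some polynomials $p_j$; since $\rad{\supp M_A}$ is an ideal (by identity \eqref{id4}, as $\tfrac12\in\rx$), the right-hand side lies in $\rad{\supp M_A}$. Hence $\ell_i^2=\big(\sum_{j<i}p_j\ell_j-\tr(AS_i)\big)\in\rad{\supp M_A}-M_A$, while simultaneously $\ell_i^2\in M_A$. By Lemma \ref{lem:this}, $\ell_i^2\in\rad{\supp M_A}$, and since that ideal is radical, $\ell_i\in\rad{\supp M_A}$. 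This closes the induction, so all $\ell_1,\dots,\ell_n\in\rad{\supp M_A}$.

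Finally, condition \eqref{indsos2} gives $f-c-\tr(AS)=\sum_{j=1}^n q_j\ell_j$ for some $q_j\in\rx$, so $f-c-\tr(AS)\in\rad{\supp M_A}$ (again since it is an ideal). Rearranging, $f=\big(c+\tr(AS)\big)+\big(f-c-\tr(AS)\big)$; the first summand lies in $C_A$ (it is $c+\tr(AS)$ with $c\ge0$ and $S\in\sapsd$, which is precisely the defining form of elements of $C_A$), and the second lies in $\rad{\supp M_A}$. Therefore $f\in C_A+\rad{\supp M_A}$, as desired.

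\textbf{Main obstacle.} The only subtle point is the application of Lemma \ref{lem:this} at the squared level: one must pass through $\ell_i^2$ rather than $\ell_i$ itself, since \eqref{indsos1} only controls $\ell_i^2$ directly, and then invoke that $\rad{\supp M_A}$ is genuinely a radical ideal to descend from $\ell_i^2$ to $\ell_i$. Verifying that $\supp M_A$ is an ideal (so that its radical is too) relies on $\tfrac12\in\rx$ via \eqref{id4}; this is exactly the remark made just before the subsection, so no new work is needed. Everything else is bookkeeping with the definitions of $C_A$ and $M_A$.
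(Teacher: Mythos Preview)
Your proof is correct and follows essentially the same approach as the paper's own proof: both set $\cI=\rad{\supp M_A}$, prove by induction that each $\ell_i\in\cI$ via Lemma~\ref{lem:this} (i.e., inclusion \eqref{prestelcor1}) applied to $\ell_i^2\in(\cI-M_A)\cap M_A$, and then read off $f\in C_A+\cI$ from \eqref{indsos2}. You are slightly more explicit about the descent from $\ell_i^2$ to $\ell_i$ and about why $\supp M_A$ is an ideal, but the argument is the same.
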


\begin{proof}
Set $\cI:=\rad{\supp M_A}$.
It is clear that \eqref{indsos2} gives $f\in C_A+\cI$ if we prove that
$\ell_i\in\cI$ for all $i\in\{1,\dots,n\}$. Fix $i\in\{1,\dots,n\}$ and assume by induction that $\ell_1,\dots,\ell_{i-1}\in\cI$. Then
\eqref{indsos1} implies $\ell_i^2+\tr(AS_i)\in\cI$ and therefore $\ell_i^2\in(\cI-M_A)\cap\sos\subseteq(\cI-M_A)\cap M_A
\subseteq\cI$ by \eqref{prestelcor1}.
\end{proof}

We get the same interpretation for the certificates from Theorem \ref{sosram}.

\begin{proposition}\label{sosram++}
If $A\in S\rx^{\al\times\al}_1$ is a linear pencil,
$f\in\rx_1$ is a linear polynomial, $S_1,\dots,S_n\in S\rx^{\al\times\al}_2$ are quadratic sos-matrices,
$U_1,\dots,U_n\in S\R^{s(1)\times s(1)}$, $W_1,\dots,W_n\in\R^{s(2)\times s(1)}$, $S\in S\R^{\al\times \al}_{\succeq0}$ and $c\in\R_{\ge0}$ are such that
\eqref{sosram1}, \eqref{sosram2} and \eqref{sosram3} hold, then $f\in C_A+\rad{\supp M_A}$.
\end{proposition}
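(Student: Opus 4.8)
The plan is to mimic the proof of Proposition~\ref{indsos++} almost verbatim, replacing the appeals to the ideal-theoretic conditions \eqref{indsos1}--\eqref{indsos2} by the matrix conditions \eqref{sosram1}--\eqref{sosram3}, and replacing the inductive step "$\ell_{1},\dots,\ell_{i-1}\in\cI\implies\ell_i\in\cI$'' by "$\vecx2^*W_{i-1}\vecx1\in\cI\implies\vecx2^*W_i\vecx1\in\cI$''. Set $\cI:=\rad{\supp M_A}$; note $\cI$ is a radical ideal of $\rx$ (by \eqref{id4}, $\supp M_A$ is an ideal, and $\rad{}$ of an ideal is an ideal). Crucially, since $\cI=\rad{\supp M_A}=\rrad{\supp M_A}$ by Proposition~\ref{prestel}, $\cI$ is in fact a \emph{real} radical ideal, so Proposition~\ref{matrad} applies with it.

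First I would observe that it suffices to show $\vecx2^*W_n\vecx1\in\cI$: granting that, \eqref{sosram3} reads $f=c+\tr(AS)-\vecx2^*W_n\vecx1$ with $c\ge0$, $\tr(AS)\in M_A\subseteq C_A$ (as $S\in\sapsd$), and $\vecx2^*W_n\vecx1\in\cI$, hence $f\in C_A+\cI$. (Here I use $c+\tr(AS)\in C_A$, which holds since $C_A=\{c+\tr(AS)\mid c\ge0,\ S\succeq0\}$ by definition.) So the whole content is the claim that $\vecx2^*W_i\vecx1\in\cI$ for all $i\in\{0,1,\dots,n\}$, proved by induction on $i$ exactly as in the first half of the proof of Theorem~\ref{sosram}, but with "vanishing on $S_A$'' upgraded to "lying in $\cI$''.

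The base case $i=0$ is trivial since $W_0=0$. For the inductive step, suppose $\vecx2^*W_{i-1}\vecx1\in\cI$. Rearranging \eqref{sosram1} gives
\[
\vecx1^*U_i\vecx1=-\vecx2^*W_{i-1}\vecx1-\tr(AS_i).
\]
The term $\tr(AS_i)$ lies in $M_A$ (as $S_i$ is an sos-matrix), and $\vecx2^*W_{i-1}\vecx1\in\cI\subseteq\cI$; therefore $\vecx1^*U_i\vecx1\in(\cI-M_A)$. On the other hand \eqref{sosram2} forces $U_i\succeq W_i^*W_i\succeq0$, so $\vecx1^*U_i\vecx1$ is an sos-polynomial, i.e.\ lies in $M_A$. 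Hence $\vecx1^*U_i\vecx1\in(\cI-M_A)\cap M_A\subseteq\cI$ by Lemma~\ref{lem:this} (equation \eqref{prestelcor1}). Now Proposition~\ref{matrad}, applied with $d=1$, $e=2$, the real radical ideal $\cI$, the matrix $U_i$ (so that $\vecx1^*U_i\vecx1\in\cI$) and $W:=W_i$ (so that $U_i\succeq W_i^*W_i$ by \eqref{sosram2}), yields $\vecx2^*W_i\vecx1\in\cI$, completing the induction.

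I do not expect any genuine obstacle: the argument is a routine transcription. The only points needing a moment's care are (i) checking that $\cI$ is honestly a real radical ideal so that Proposition~\ref{matrad} is legitimately available --- this is exactly the equality $\rad{\supp M_A}=\rrad{\supp M_A}$ from Proposition~\ref{prestel}, together with the remark after \eqref{id4} that $\supp M_A$ is an ideal --- and (ii) making sure the "$(\cI-M_A)\cap M_A\subseteq\cI$'' step is applied with $M=M_A$, matching the hypotheses of Lemma~\ref{lem:this}. Everything else is bookkeeping about which terms lie in $M_A$ versus $\cI$.
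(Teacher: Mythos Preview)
Your proposal is correct and follows essentially the same route as the paper's own proof: set $\cI:=\rad{\supp M_A}$, reduce via \eqref{sosram3} to showing $\vecx2^*W_n\vecx1\in\cI$, and prove by induction that $\vecx2^*W_i\vecx1\in\cI$ using \eqref{sosram1}, \eqref{sosram2}, Lemma~\ref{lem:this}, and Proposition~\ref{matrad} (the latter licensed because $\cI$ is real radical by Proposition~\ref{prestel}). The extra care you take in justifying that $\supp M_A$ is an ideal and that $c+\tr(AS)\in C_A$ is fine and only makes the argument more explicit.
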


\begin{proof}
Set $\cI:=\rad{\supp M_A}$.
It is clear that constraint \eqref{sosram3} gives $f\in C_A+\cI$ if we show that
$\vecx2W_n\vecx1\in\cI$. In fact, we show by induction that $\vecx2W_i\vecx1\in\cI$ for all $i\in\{0,\dots,n\}$.

The case $i=0$ is trivial since $W_0=0$ by definition. Let $i\in\{1,\dots,n\}$ be given and suppose
that we know already $\vecx2^*W_{i-1}\vecx1\in\cI$.
Then \eqref{sosram1} shows $\vecx1^*U_i\vecx1\in\cI-M_A$. On the other hand \eqref{sosram2} implies in particular
$U_i\succeq0$ and therefore $\vecx1^*U_i\vecx1\in\sos\subseteq M_A$. But then
$\vecx1^*U_i\vecx1\in(\cI-M_A)\cap M_A\subseteq\cI$ by \eqref{prestelcor1}. Now \eqref{sosram2} yields
$\vecx2^*W_i\vecx1\in\cI$ by Proposition \ref{matrad} since $\cI$ is real radical by Proposition \ref{prestel}.
This ends the induction.
\end{proof}

The following corollary is now a generalization of Proposition \ref{usual} working also for low-dimensional $S_A$ (note that $\supp M_A=(0)$ if
$S_A$ has non-empty interior).

\begin{corollary}\label{cor:radlinsat}
Let $A\in S\rx^{\al\times\al}_1$ be a linear pencil. Then
$$f\ge0\text{\ on\ }S_A\ \iff\ f\in C_A+\sqrt{\supp M_A}$$
for all $f\in\rx_1$.
\end{corollary}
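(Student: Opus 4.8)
The direction $(\Leftarrow)$ is immediate: any element of $\sqrt{\supp M_A}$ vanishes on $S_A$ (since $\supp M_A$ consists of polynomials that are both nonnegative and nonpositive on $S_A$, hence vanish there, and taking radicals changes nothing about the real zero set), while any element of $C_A$ is nonnegative on $S_A$ by the very definition of $C_A$. So it suffices to prove $(\Rightarrow)$, and this is where the work done in the previous subsections pays off.

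First I would reduce the claim to the already-established machinery. Suppose $f \in \rx_1$ with $f \ge 0$ on $S_A$. By Theorem~\ref{indsos} (or equivalently its SDP-implementable form Theorem~\ref{sosram}), there exist $\ell_1,\dots,\ell_n \in \rx_1$, quadratic sos-matrices $S_1,\dots,S_n \in S\rx^{\al\times\al}_2$, a matrix $S \in \sapsd$ and $c \ge 0$ satisfying \eqref{indsos1} and \eqref{indsos2}. Then I would invoke Proposition~\ref{indsos++}, which takes exactly this data and concludes $f \in C_A + \sqrt{\supp M_A}$. That is the entire content of the corollary.

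So the plan is essentially a two-line assembly: $(\Leftarrow)$ from the definitions plus the trivial direction of the Real Nullstellensatz, and $(\Rightarrow)$ by chaining Theorem~\ref{indsos} into Proposition~\ref{indsos++}. The only point requiring a word of care is the equality $\sqrt{\supp M_A} = \rrad{\supp M_A}$ used implicitly when matching notation: Proposition~\ref{indsos++} is stated with $\rad{\supp M_A}$, and by Proposition~\ref{prestel} this coincides with $\rrad{\supp M_A}$, so there is no discrepancy. I do not anticipate a genuine obstacle here — the corollary is packaging, and all the real analysis (the low-dimensionality certificate of Proposition~\ref{exsos}, the induction in Theorem~\ref{indsos}, and Prestel's semiordering input via Lemma~\ref{lem:this}) has already been carried out upstream. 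If anything, the one thing to double-check is that the hypotheses of Proposition~\ref{indsos++} are met verbatim by the output of Theorem~\ref{indsos}, which they are by construction.

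\begin{proof}
The implication ``$\Leftarrow$'' is clear: each element of $C_A$ is nonnegative on $S_A$ by definition of $C_A$, and each element of $\supp M_A=M_A\cap(-M_A)$ is simultaneously nonnegative and nonpositive on $S_A$, hence vanishes on $S_A$; the same is then true of each element of $\sqrt{\supp M_A}$.

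For ``$\Rightarrow$'', let $f\in\rx_1$ with $f\ge0$ on $S_A$. By Theorem~\ref{indsos} there exist $\ell_1,\dots,\ell_n\in\rx_1$, quadratic sos-matrices $S_1,\dots,S_n\in S\rx^{\al\times\al}_2$, a matrix $S\in\sapsd$ and $c\ge0$ satisfying \eqref{indsos1} and \eqref{indsos2}. Proposition~\ref{indsos++} then yields $f\in C_A+\rad{\supp M_A}=C_A+\sqrt{\supp M_A}$.
\end{proof}
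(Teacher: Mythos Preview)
Your proof is correct and follows essentially the same approach as the paper: the paper's proof simply reads ``Combine either Theorem \ref{indsos} with Proposition \ref{indsos++}, or Theorem \ref{sosram} with Proposition \ref{sosram++}.'' You spell out the easy direction $(\Leftarrow)$ explicitly and then chain Theorem~\ref{indsos} into Proposition~\ref{indsos++} for $(\Rightarrow)$, which is precisely what the paper intends.
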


\begin{proof}
Combine either Theorem \ref{indsos} with Proposition \ref{indsos++}, or Theorem \ref{sosram} with Proposition \ref{sosram++}.
\end{proof}

\begin{corollary}\label{cor:almostsat}
Let $A\in S\rx^{\al\times\al}_1$ be a linear pencil. Then
$$f\ge0\text{\ on\ }S_A\ \iff\ \forall\ep>0:f+\ep\in M_A$$
for all $f\in\rx_1$.
\end{corollary}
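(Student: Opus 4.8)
The plan is to deduce Corollary \ref{cor:almostsat} from Corollary \ref{cor:radlinsat}. One direction is trivial: if $f+\ep\in M_A$ for every $\ep>0$, then since every element of $M_A$ is nonnegative on $S_A$, we get $f+\ep\ge0$ on $S_A$ for all $\ep>0$, hence $f\ge0$ on $S_A$ by letting $\ep\to0$.

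For the forward direction, suppose $f\ge0$ on $S_A$. By Corollary \ref{cor:radlinsat} we may write $f=g+h$ with $g\in C_A$ and $h\in\sqrt{\supp M_A}$. Since $C_A\subseteq M_A$ (take the sos-polynomial in Definition \ref{def:scm} to be a nonnegative constant), it suffices to show that $h+\ep\in M_A$ for every $\ep>0$, for then $f+\ep=g+(h+\ep)\in M_A+M_A\subseteq M_A$. So the problem reduces to: if $h\in\sqrt{\supp M_A}$ and $\ep>0$, then $h+\ep\in M_A$. By definition of the radical there is $k\in\N$ with $h^k\in\supp M_A\subseteq M_A$; in fact $\pm h^k\in M_A$. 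The idea is to bound $-h$ from below by an sos-polynomial plus something in $M_A$, up to the constant $\ep$. Concretely, I expect to use a scalar inequality of the form $t+\ep\ge -c\,t^{2m}$ valid for all real $t$ for a suitable constant $c=c(\ep,m)>0$ — equivalently $t+\ep+c\,t^{2m}\ge0$ on $\R$, which holds since the left side tends to $+\infty$ as $|t|\to\infty$ and is bounded below. Since this is a univariate polynomial in $t$ nonnegative on all of $\R$, and $\ell=h$ is our polynomial, one cannot directly conclude it is a sum of squares composed with $h$ unless one is careful; instead I would pick $m$ so that $2m$ is a multiple of (an even power above) $k$, say work with $h^k$ which lies in $\supp M_A$, and use that for a univariate nonnegative polynomial $p(t)\ge0$ on $\R$ we have $p$ is a sum of two squares of polynomials, so $p(h)\in\sos$.

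Making this precise: fix $\ep>0$. Choose $k$ with $h^k\in\supp M_A$, so $h^{2k}\in\supp M_A$ as well, and note $-h^{2k}\in M_A$. Consider the univariate polynomial $q(t):=t+\ep+c\,t^{2k}$ where $c>0$ is chosen large enough that $q(t)\ge0$ for all $t\in\R$ (possible since the leading term has even degree and positive coefficient and the polynomial is bounded below on $\R$; an explicit $c$ comes from minimizing $t+c t^{2k}$ over $\R$, giving a bound $c\ge$ const$/\ep^{2k-1}$). Since $q$ is a nonnegative univariate polynomial, it is a sum of squares: $q=\sum_j r_j^2$ with $r_j\in\R[t]$. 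Substituting $t=h$ gives $h+\ep+c\,h^{2k}=\sum_j r_j(h)^2\in\sos\subseteq M_A$. Therefore $h+\ep=\big(\sum_j r_j(h)^2\big)+\big(-c\,h^{2k}\big)\in M_A+M_A\subseteq M_A$, since $-c h^{2k}=c(-h^{2k})\in M_A$ as $M_A$ is closed under multiplication by the square $(\sqrt c)^2$ — or simply by nonnegative scalars — and under addition. This completes the argument.

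The main obstacle is purely bookkeeping: confirming that $C_A\subseteq M_A$ and that $M_A$ is genuinely closed under the operations used (addition, and absorbing the positive constant $c$ via multiplication by a square), and getting the univariate inequality $t+\ep+ct^{2k}\ge0$ with a clean choice of $c$; none of this is deep. The one genuine idea is the trick of replacing the radical membership $h\in\sqrt{\supp M_A}$ by an even power $h^{2k}\in\supp M_A$ and then ``straightening'' $h$ itself back into $M_A$ at the cost of an arbitrarily small constant, using that a univariate polynomial nonnegative on $\R$ is a sum of squares. I would present it exactly in that order: trivial direction; reduction via Corollary \ref{cor:radlinsat} and $C_A\subseteq M_A$ to the case $f\in\sqrt{\supp M_A}$; then the univariate sum-of-squares argument.
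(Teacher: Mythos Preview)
Your argument is correct and complete. Both you and the paper start from Corollary \ref{cor:radlinsat}, write $f=g+h$ with $g\in C_A$ and $h\in\sqrt{\supp M_A}$, note $C_A\subseteq M_A$, and reduce to showing $h+\ep\in M_A$. From there the two proofs diverge in the technical trick used.

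The paper first scales to $\ep=1$ and then observes that, modulo the ideal $\cI=\supp M_A$, the element $h$ is nilpotent; hence $1+h$ has a square root in $\rx/\cI$ given by the truncated binomial series $\sum_{i=0}^{k-1}\binom{1/2}{i}h^i$, so $1+h$ is a square modulo $\cI$ and therefore lies in $M_A$. Your route instead keeps $\ep$ arbitrary, passes to an even power $h^{2k}\in\supp M_A$, and uses the elementary fact that the univariate polynomial $t+\ep+ct^{2k}$ is nonnegative on $\R$ for $c$ large (hence a sum of two squares in $\R[t]$), then substitutes $t=h$ and subtracts $ch^{2k}\in -M_A$. Both tricks are short and self-contained; the paper's binomial square root is perhaps cleaner algebraically (no auxiliary constant $c$ to choose, and it works uniformly once one normalizes to $\ep=1$), while your argument is arguably more elementary in that it avoids formal power series manipulations and relies only on the classical fact that nonnegative univariate real polynomials are sums of squares. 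Either way the essential content is the same: an element of $\sqrt{\supp M_A}$ can be pushed into $M_A$ at the cost of an arbitrarily small additive constant.
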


\begin{proof}
To prove the non-trivial implication, let $f\in\rx_1$ with $f\ge0$ on $S_A$ be given. It suffices to show $f+\ep\in M_A$ for the special case $\ep=1$
(otherwise replace $f$ by $\ep f$ and divide by $\ep$). By Corollary \ref{cor:radlinsat}, there exists $g\in C_A$, $p\in\rx$ and $k\in\N$
such that $f=g+p$ and $p^k\in\cI:=\supp M_A$. Now $f+\ep=f+1=g+(f-g)+1=g+(p+1)$ and it is enough to show that $p+1\in M_A$. This will follow
from the fact that the image of $p+1$ is a square in the quotient ring $\rx/\cI$. Indeed, since the image of $p$ in $\rx/\cI$ is nilpotent (in fact the image of
$p^k$ is zero), we can simply write down a square root of this element using the \emph{finite} Taylor expansion at $1$ of the square root function
in $1$ given by the binomial series:
\[p+1\equiv\left(\sum_{i=0}^{k-1}\binom{\frac12}ip^i\right)^2\qquad\text{mod }\cI.\qedhere\]
\end{proof}

We point out that adding an $\ep>0$ in Corollary  \ref{cor:almostsat} cannot be avoided:

\begin{example}\label{new2}
Consider $$A= \begin{bmatrix} 1 & {\tt x} \\ {\tt x} & 0\end{bmatrix}.$$
Then $S_A=\{0\}$. Hence obviously ${\tt x}\geq0$ on $S_A$. But it is easy to see that ${\tt x}\not\in M_A$ \cite[Example 2]{za}.
\end{example}

\subsection{Infeasibility of linear pencils}\label{subs:degbound}

Let $A$ be a linear pencil. In Subsection \ref{subs:weakly} we have introduced the convex cone $C_A\subseteq\rx_1$ and above we have defined the quadratic module
$M_A\subseteq\rx$ associated to $A$, both consisting of polynomials which are obviously nonnegative on the spectrahedron $S_A$.
The convex cone $C_A$ is in general too
small to detect infeasibility of $A$ (in particular, strong duality for the standard
primal-dual pair of an SDP fails; see also Subsection \ref{subs:ex} below).
On the other hand, as an easy consequence of Corollary \ref{cor:almostsat}, we have
the following version of Farkas' lemma for SDP which unlike Lemma \ref{sturm} does not only work for
strongly but also for weakly infeasible linear pencils. The price we pay is that we 
have to replace the finite-dimensional convex cone $C_A$ by the
infinite-dimensional quadratic module $M_A$. 

\begin{corollary}[Nonlinear Farkas' lemma for semidefinite programming]\label{cor:infeasibleMa}
Let $A$ be a linear pencil. Then $$S_A=\emptyset\quad\iff\quad -1\in M_A.$$
In other words, $A$ is infeasible if and only if $M_A$ is
improper.
\end{corollary}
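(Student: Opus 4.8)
The plan is to deduce Corollary \ref{cor:infeasibleMa} as a quick consequence of Corollary \ref{cor:almostsat}, exactly as the text announces. The implication ``$-1\in M_A\implies S_A=\emptyset$'' is the trivial direction: every element of $M_A$ is nonnegative on $S_A$, so if $-1\in M_A$ then $-1\ge 0$ on $S_A$, forcing $S_A=\emptyset$. The content is in the converse.

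For the converse, suppose $S_A=\emptyset$. Then the linear polynomial $f:=-1\in\rx_1$ vacuously satisfies $f\ge 0$ on $S_A$ (there are no points to check). I would then apply Corollary \ref{cor:almostsat} to this $f$: it yields $f+\ep\in M_A$ for every $\ep>0$. Taking $\ep=1$ gives exactly $-1+1=0\in M_A$, which is not yet what we want; so instead I take, say, $\ep=\frac12$, obtaining $-\frac12\in M_A$. Since $M_A$ is a quadratic module, it is closed under multiplication by squares, and $2=(\sqrt2)^2$ is a square in $\rx$ (indeed $2\cdot(-\tfrac12)=-1$), so $-1=2\cdot(-\tfrac12)\in M_A$. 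This establishes $-1\in M_A$ and finishes the proof; the final sentence of the statement, that $A$ infeasible $\iff$ $M_A$ improper, is then just the definition of ``improper'' ($-1\in M_A$) unwound, so nothing more is needed.

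There is essentially no obstacle here: the whole corollary is a one-line packaging of Corollary \ref{cor:almostsat}, whose proof in turn rests on Corollary \ref{cor:radlinsat} and hence on Theorems \ref{indsos}/\ref{sosram}. The only tiny point to be careful about is that Corollary \ref{cor:almostsat} only produces $f+\ep\in M_A$ for $\ep>0$ strictly, so one cannot literally set $\ep=1$ and read off $-1\in M_A$; one must absorb a positive scalar using the quadratic-module axiom $a^2M_A\subseteq M_A$, as above. Equivalently one can note directly: $f\ge 0$ on $S_A$ with $f=-1$ gives, for all $\ep>0$, $-1+\ep\in M_A$, and multiplying $-1+\tfrac12\in M_A$ by the square $2$ yields $-1\in M_A$. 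I would present it in this compact form.

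\begin{proof}
If $-1\in M_A$, then since every element of $M_A$ is nonnegative on $S_A$, we would have $-1\ge0$ on $S_A$, which is absurd unless $S_A=\emptyset$.

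Conversely, assume $S_A=\emptyset$. Then the linear polynomial $f:=-1$ satisfies (vacuously) $f\ge0$ on $S_A$. By Corollary \ref{cor:almostsat}, $f+\ep\in M_A$ for every $\ep>0$; in particular, taking $\ep=\tfrac12$ gives $-\tfrac12\in M_A$. Since $M_A$ is a quadratic module and $2=(\sqrt2)^2$ is a square in $\rx$, we obtain
\[
-1 = 2\cdot\left(-\tfrac12\right)\in M_A.
\]
Finally, $A$ being infeasible means $S_A=\emptyset$, and $M_A$ being improper means $-1\in M_A$, so the displayed equivalence is precisely the last assertion of the statement.
\end{proof}
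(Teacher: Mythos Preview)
Your proof is correct and is exactly the easy deduction from Corollary~\ref{cor:almostsat} that the paper announces; the paper itself omits the details entirely. A slightly more direct variant is to apply Corollary~\ref{cor:almostsat} to the constant linear polynomial $f:=-2$ and take $\ep=1$, which yields $-1\in M_A$ without the scaling step.
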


To pass from the infinite-dimensional $M_A$ to finite-dimensions, we introduce
 truncations of $M_A$:

\begin{definition}
Given a linear pencil $A$ of size $\al$ and $k\in\N_0$, let
\begin{align*}
M_A^{(k)}&:=\Big\{\sum_ip_i^2+\sum_jv_j^*Av_j\mid p_i\in\rx_k,~v_j\in\rx^\al_k\Big\}\\
&~=\Big\{s+\tr(AS)\mid s\in\R\cx_{2k}\text{ sos-polynomial},\ S\in\R\cx^{\al\times\al}_{2k}\text{ sos-matrix}\Big\}\\
&~\subseteq\R\ac_{2k+1},
\end{align*}
be the \emph{truncated quadratic module with degree restriction} $k$
associated to $A$. Note that $M_A^{(0)}=C_A$.
\end{definition}

Our main result in this subsection is the following quantitative strengthening of Corollary \ref{cor:infeasibleMa}:

\begin{theorem}\label{thm:bounds}
Let $A$ be an infeasible linear pencil of size $\al$ in $n$ variables. Then
$$-1\in M_A^{(2^{\min\{\al{ -1},n\}}-1)}.$$
\end{theorem}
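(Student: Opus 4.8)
The plan is to prove, by induction on $m:=\min\{\al-1,n\}$, the following sharpening: \emph{every infeasible linear pencil $A$ of size $\al$ in $n$ variables satisfies $-1\in M_A^{(2^m-1)}$}; the theorem is then the case $m=\min\{\al-1,n\}$. For the base case $m=0$ one has $\al=1$ or $n=0$: if $\al=1$, infeasibility of $A=A_0+\sum_i{\tt x}_iA_i$ forces $A_i=0$ $(i\ge1)$ and $A_0<0$, so $-1=\tr(A\cdot(-A_0^{-1}))\in C_A=M_A^{(0)}$; if $n=0$, then $A_0\not\succeq0$, so $-1=\tr(A_0uu^*)\in M_A^{(0)}$ for a suitable $u$ with $u^*A_0u=-1$. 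For the inductive step ($\al\ge2$, $n\ge1$): if $Au=0$ for some $u\ne0$, delete the corresponding row and column — the new pencil has size $\al-1$, the same variables, the same (empty) spectrahedron, and $M_{A'}^{(k)}\subseteq M_A^{(k)}$ by padding a kernel vector, so induction applies since $\min\{\al-2,n\}\le m$. Thus assume $Au\ne0$ for all $u\ne0$. If $A$ is strongly infeasible, Lemma~\ref{sturm} gives $-1\in C_A=M_A^{(0)}\subseteq M_A^{(2^m-1)}$; so assume $A$ weakly infeasible.

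By Lemma~\ref{kaffeebohne07} there are $u_1,\dots,u_k\in\R^\al\setminus\{0\}$ with $\tr(AP)=0$ for $P:=\sum_iu_iu_i^*\succeq0$. Put $r:=\rank P\ge1$; after an orthogonal change of basis of $\R^\al$ with $\ran P=\Span(e_1,\dots,e_r)$, write $A=\begin{bmatrix}B&C^*\\ C&D\end{bmatrix}$ with $B$ the top-left $r\times r$ block. Since $P|_{\ran P}\succ0$ and $\tr(AP)=\tr\!\big(B\cdot P|_{\ran P}\big)$, for $x\in S_A$ the identity $\tr\!\big(B(x)\,P|_{\ran P}\big)=0$ together with $B(x)\succeq0$ forces $B(x)=0$, and then $A(x)\succeq0$ forces $C(x)=0$. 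Hence $S_A\subseteq V:=\{x:B(x)=0,\ C(x)=0\}$, the common zero set of the (affine-linear) entries of the first $r$ columns of $A$; let $\rho$ be the dimension of their span. If $\rho=0$ the first $r$ columns of $A$ vanish identically, contradicting $Ae_1\ne0$, so $\rho\ge1$. On $V$ the first $r$ rows and columns of $A$ are $0$, so $A|_V=\begin{bmatrix}0&0\\ 0&\hat A\end{bmatrix}$ with $\hat A:=D|_V$ an infeasible pencil of size $\al-r\le\al-1$ in the $n-\rho\le n-1$ variables surviving on $V$; since $\min\{\al-r-1,\,n-\rho\}\le\min\{\al-2,n-1\}=m-1$, the induction hypothesis yields $-1\in M_{\hat A}^{(d)}$ with $d:=2^{m-1}-1$, say $-1=\sigma+\tr(\hat A\,\bar S)$ with $\sigma\in\rx_{2d}$ a sum of squares and $\bar S$ an sos-matrix of degree $\le2d$ in the surviving variables.

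The technical heart is the claim that \emph{the span of the first-$r$-column entries of $A$ has a basis $\ell_1,\dots,\ell_\rho$ with $-\ell_t^2\in M_A^{(1)}$ for every $t$} (the \emph{sharp} degree bound $1$ is what makes the doubling $d\mapsto2d+1$ close up exactly). By polarization that span equals $\Span\{w^*Aw:w\in\ran P\}$, so it suffices to treat forms $w^*Aw$ with $w\in\ran P$. For such $w$ and small $\ep>0$ one has $\ep^2ww^*\preceq P$, so $P$ has a decomposition into rank-one summands with $\ep w$ among them; running the Case~2 computation in the proof of Proposition~\ref{exsos} on that decomposition produces a quadratic sos-matrix with $-(\ep^2w^*Aw)^2=\tr(A\,\cdot\,)$, i.e.\ $-(w^*Aw)^2\in M_A^{(1)}$. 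Taking $w\in\{e_s\}_{s\le r}\cup\{e_s+e_t\}_{s<t\le r}$ gives enough such forms to extract the desired basis; the identities $\pm2\ell_s\ell_t=(\ell_s\mp\ell_t)^2+(-\ell_s^2)+(-\ell_t^2)$ then also give $\pm\ell_s\ell_t\in M_A^{(1)}$. This handles the degenerate case $V=\emptyset$ too: then $1=\sum_t\la_t\ell_t$ with $\la_t\in\R$, so $-1=\sum_{s,t}\big(-\la_s\la_t\,\ell_s\ell_t\big)\in M_A^{(1)}\subseteq M_A^{(2^m-1)}$ (using $m\ge1$).

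For the lift, assume $V\ne\emptyset$ and, after an affine change of coordinates, $\ell_t={\tt x}_{n-\rho+t}$, so $V=\{{\tt x}_{n-\rho+1}=\dots={\tt x}_n=0\}$. Pad $\bar S$ to $\tilde S=\begin{bmatrix}0&0\\ 0&\bar S\end{bmatrix}$ (still an sos-matrix of degree $\le2d$): then $\tr(A\tilde S)=\tr(D\bar S)$, and writing $D=D_0+\sum_j{\tt x}_jD_j$ with constant $D_j$, one gets $\tr(D\bar S)-\tr(\hat A\bar S)=\sum_{t=1}^\rho\ell_t h_t=:g$ with $h_t:=\tr(D_{n-\rho+t}\bar S)\in\rx_{2d}$; hence $-1=\sigma+\tr(A\tilde S)-g$. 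Thus $g-1=\sigma+\tr(A\tilde S)\in M_A^{(2d+1)}$; moreover $-\ell_t^2h_t^2=\tr\!\big(A\,h_t^2S^{(t)}\big)\in M_A^{(2d+1)}$ (where $-\ell_t^2=\tr(AS^{(t)})$ with $\deg S^{(t)}\le2$), so by $-2\ell_s\ell_th_sh_t=(\ell_sh_s-\ell_th_t)^2+(-\ell_s^2h_s^2)+(-\ell_t^2h_t^2)$ we obtain $-g^2=\sum_{s,t}\big(-\ell_s\ell_th_sh_t\big)\in M_A^{(2d+1)}$. Finally $-1=(-g^2)+(g-1)^2+2(g-1)$ exhibits $-1$ in $M_A^{(2d+1)}=M_A^{(2^m-1)}$. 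The main obstacle is exactly the boxed claim together with the requirement that passing to $V$ strictly lowers \emph{both} the matrix size and the number of variables: this is what forces the facial-reduction direction to be taken of the special form $P=\sum u_iu_i^*$ and the $\ep$-scaling trick for the squares, and it is what produces $\min\{\al-1,n\}$ rather than merely $n$ in the exponent.
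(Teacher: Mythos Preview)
Your overall architecture---a single induction on $m=\min\{\al-1,n\}$ that simultaneously drops the matrix size by $r=\rank P$ and the number of variables by $\rho$---is a genuinely nice repackaging of what the paper does with two separate inductions (Lemma~\ref{lem:bounds1} on $n$ and Lemma~\ref{lem:bounds2} on $\al$). The lifting identity $-1=(-g^2)+(g-1)^2+2(g-1)$ and the degree bookkeeping are correct and match the paper's computations essentially verbatim.

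There is, however, a real gap at the ``technical heart''. Your polarization claim that the span of the first-$r$-column entries of $A$ equals $\Span\{w^*Aw:w\in\ran P\}$ is false: polarization within $\ran P$ only recovers the span of the $B$-block entries, never the off-diagonal $C$-entries. Concretely, take the pencil of Example~\ref{ex:sturm2}: there $e_1^*Ae_1=0$, so $P=e_1e_1^*$, $r=1$, and $w^*Aw=0$ for every $w\in\ran P$; yet the first column of $A$ contains ${\tt x}_1$. Your argument therefore produces no $\ell_t$ at all and cannot proceed. (A minor related point: in the kernel-reduction step you write ``induction applies since $\min\{\al-2,n\}\le m$'', but equality can occur when $n\le\al-2$; this is harmless---just iterate the deletion until the kernel is trivial or $\al=1$---but should be said.)

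The claim you need is true, and your $\ep$-scaling trick is exactly the right first move: it shows $\pm e_i^*Ae_i\in C_A$ for each $i\le r$ (in particular $-A_{ii}\in C_A$ with zero constant term). What is missing is the second move, which is the paper's computation from Case~1 of Proposition~\ref{exsos} (reused in Lemmas~\ref{lem:bounds1} and~\ref{lem:bounds2}): for any $j$ and any $i\le r$, set $v:=\big[\tfrac12(-1-A_{jj})\big]e_i+A_{ij}e_j$; then $v^*Av=\tfrac14(1+A_{jj})^2A_{ii}-A_{ij}^2$, and since $-A_{ii}\in C_A$ one gets $-A_{ij}^2=v^*Av+\tfrac14(1+A_{jj})^2(-A_{ii})\in M_A^{(1)}$. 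This handles all first-$r$-column entries, including the $C$-block, and the rest of your proof then goes through unchanged.
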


Observe that if $A$ is strictly infeasible, then this statement follows by Lemma \ref{sturm}.
We present a self-contained proof of Theorem \ref{thm:bounds} based on the following
two lemmas.

\begin{lemma}\label{lem:bounds1}
Let $A$ be an infeasible linear pencil in $n$ variables. Then
$$-1\in M_A^{(2^n-1)}.$$
\end{lemma}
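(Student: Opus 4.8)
\textbf{Proof plan for Lemma~\ref{lem:bounds1}.}
The plan is to run an induction on the number of variables $n$, mirroring the structure of the proof of Theorem~\ref{indsos} but keeping track of the degrees so that the degree restriction doubles (at most) at each step. For $n=0$, infeasibility of $A$ means $A\in S\R^{\al\times\al}$ is a constant symmetric matrix that is not positive semidefinite, so there is $u\in\R^\al$ with $u^*Au=-1$ after scaling, and hence $-1=\tr(A\,uu^*)\in M_A^{(0)}$. This settles the base case with degree restriction $2^0-1=0$.

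For the inductive step, suppose the statement holds in $n-1$ variables and let $A$ be infeasible in $n$ variables. First I would dispose of the \emph{strongly} infeasible case: by Lemma~\ref{sturm}, $-1\in C_A=M_A^{(0)}\subseteq M_A^{(2^n-1)}$, and we are done. So assume $A$ is weakly infeasible. Then $S_A$ has empty interior, so Proposition~\ref{exsos} (applied after deleting rows and columns corresponding to a common kernel vector of $A$, which does not change $S_A$ nor affect membership in $M_A$) yields a nonzero $f=\ell_1\in\rx_1$ and a quadratic sos-matrix $S_1\in S\rx^{\al\times\al}_2$ with $\ell_1^2+\tr(AS_1)=0$. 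If $\ell_1$ is a nonzero constant, then $-1$ is (up to scaling) already $\tr(AS_1)\in M_A^{(1)}\subseteq M_A^{(2^n-1)}$, done. Otherwise, after an affine linear change of variables we may assume $\ell_1=\x_n$, so that $\x_n^2+\tr(AS_1)=0$; note $S_1$ has entries in $\rx_1$ (it is a quadratic \emph{sos}-matrix coming from a square $vv^*$ with $v$ linear in the Case~1/Case~2 construction of Proposition~\ref{exsos}), hence its entries have degree $\le1$, well within degree restriction $1$.

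Now set $A':=A(\x_1,\dots,\x_{n-1},0)$, a linear pencil in $n-1$ variables. I claim $A'$ is infeasible: if $x'\in S_{A'}$ then $(x',0)\in\R^n$ would satisfy $A(x',0)=A'(x')\succeq0$, contradicting $S_A=\emptyset$. By the induction hypothesis there are an sos-polynomial $s'\in\rx_{2(2^{n-1}-1)}$ and an sos-matrix $S'\in\rx^{\al\times\al}_{2(2^{n-1}-1)}$ with $-1=s'+\tr(A'S')$, i.e.\ $-1-s'-\tr(A'S')=0$ in $n-1$ variables. Substituting $\x_n=0$ is the same as working modulo the ideal $(\x_n)=(\ell_1)$, so in $\rx$ we have $-1-s'-\tr(AS')\in(\x_n)$, say $-1-s'-\tr(AS')=g\,\x_n$ with $g\in\rx$ of controlled degree. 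The idea is to absorb the $\x_n$ multiple using the relation $\x_n^2=-\tr(AS_1)$. Concretely, I would write $g\,\x_n = \x_n\cdot g$ and multiply the identity $\x_n^2+\tr(AS_1)=0$ by a suitable polynomial, or more cleanly: complete the square, writing $g\x_n = \tfrac14\big((g+\x_n)^2-(g-\x_n)^2\big)-\tfrac{?}{}$ — the cleanest route is to use that for any polynomial $h$, $h\,\x_n \equiv -\,\tfrac{(h-1)^2}{4}\tr(AS_1) + (\text{sos}) \pmod{\text{nothing}}$ via $h\x_n=\tfrac14((h+\x_n)^2-(h-\x_n)^2)$ and replacing $\x_n^2$ by $-\tr(AS_1)$; one obtains $g\x_n = (\text{sos-polynomial}) + (\text{positive combination of }\tr(A\cdot(\text{sos})))$ at the cost of squaring, which is exactly where the degree restriction roughly doubles: $\deg$ goes from $\le 2^{n-1}-1$-ish to $\le 2(2^{n-1}-1)+1 = 2^n-1$. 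Combining, $-1 = s' + \tr(AS') + g\x_n$ rearranges into an element of $M_A^{(2^n-1)}$.

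\textbf{Main obstacle.} The delicate point is the bookkeeping of degrees in the substitution step: one must verify that rewriting the ideal membership modulo $(\x_n)$ (using Lemma~\ref{folk} to get an \emph{explicit} cofactor $g$ of bounded degree, rather than abstract ideal membership) and then eliminating the factor $\x_n$ via $\x_n^2 = -\tr(AS_1)$ and identity~\eqref{id4} produces sos-polynomials and $\tr(A\cdot\text{sos-matrix})$ terms of degree at most $2(2^n-1)$, i.e.\ fits the degree-$k$ window with $k=2^n-1$. A secondary subtlety is handling the Lemma~\ref{folk} case~(b) (when $1$ lies in the ideal $(\ell_1)$), but there $\ell_1$ is constant and the argument terminated already. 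Once the degree arithmetic is checked, assembling the final identity is routine.
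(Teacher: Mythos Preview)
Your overall strategy matches the paper's: induct on $n$, obtain $-\x_n^2=\tr(AS_1)\in M_A^{(1)}$ (the paper reaches this directly via Lemma~\ref{kaffeebohne07} applied to $I_\al\otimes A$ rather than through Proposition~\ref{exsos}, but the effect is the same), apply the hypothesis to $A'=A(\x_1,\dots,\x_{n-1},0)$, and absorb the cross-term coming from $A-A'=\x_nA_n$ at the cost of roughly doubling degrees.

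The one point that needs correction is the absorption step. Your proposed use of~\eqref{id4}, $g\x_n=\tfrac14\big((g+\x_n)^2-(g-\x_n)^2\big)$, introduces a \emph{negative} square $-\tfrac14(g-\x_n)^2$, and there is no isolated $\x_n^2$ in this expression to replace by $-\tr(AS_1)$; so as written you cannot land in $M_A$. The paper's device is instead to multiply the induction identity $-1=\sum_ip_i^2+\sum_jv_j^*A'v_j$ by $2$ and add $1$, then complete the square \emph{one-sidedly} using the spare constant:
\[
-1 \;=\; 2\sum_ip_i^2+2\sum_jv_j^*Av_j+\big(1-q\x_n\big)^2+q^2\cdot(-\x_n^2),
\]
where $q:=\sum_jv_j^*A_nv_j$ (this is your $-g$; it has degree $\le 2(2^{n-1}-1)=2^n-2$ directly, so Lemma~\ref{folk} is not needed). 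Since $-\x_n^2=\tr(AS_1)$ with $S_1$ a sum of terms $ww^*$, $w\in\rx_1^\al$, the last summand equals $\sum(qw)^*A(qw)$ with $qw\in\rx_{2^n-1}^\al$, and $(1-q\x_n)\in\rx_{2^n-1}$. All four summands thus lie in $M_A^{(2^n-1)}$, confirming your degree arithmetic.
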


\begin{proof}
We shall prove this by induction on $n$. The statement is
clear for $n=0$. Given $n\in\N$ and $$A=A_0+\sum_{i=1}^n {\tt x}_iA_i\in S\R\cx^{\al\times \al}_1,$$ we assume the statement has been
established for all infeasible linear pencils with  $n-1$ variables.

By Lemma \ref{kaffeebohne07}, there is an $u\in\R^{\al^2}\setminus\{0\}$ with $u^*(I_\al\otimes A)u=0$.
Replacing $A$ by $I_\al\otimes A$  changes neither $M_A^{(k)}$ nor $S_A=\emptyset$. Without loss of generality,
we assume therefore that there is $u\in\R^\al\setminus\{0\}$ with $u^*Au=0$. Writing $A=(\ell_{ij})_{1\le i,j\le \al}$
and performing a linear coordinate change on $\R^\al$, we can moreover assume $\ell_{11}=0$.
Furthermore, without loss of generality, $\ell_{12}\neq0$. 
Setting
$\ell':=\frac12(-1-\ell_{22})$,
$v:=[\ell'\ \ell_{12}\ 0\dots0]^*$ and $S:=vv^*$, we have
\beq\label{eq:ugly0}
\tr(AS)=v^*Av=2\ell'\ell_{12}^2+\ell_{12}^2\ell_{22}=\ell_{12}^2(\ell_{22}+2\ell')=-\ell_{12}^2\in M_A^{(1)}.
\eeq
If $\ell_{12}\in\R$, we are done. Otherwise after possibly performing an
affine linear change of variables on $\R^n$, we may assume $\ell_{12}={\tt x}_n$.

Now $A':=A({\tt x}_1,\ldots,{\tt x}_{n-1},0)$ is an 
infeasible linear pencil in $n-1$ variables.
By our induction hypothesis, $-1\in M_{A'}^{(2^{n- 1}-1)}$.
In particular, there are $$p_i\in\rx_{2^{n-{ 1}}-1}\text{ and }v_j\in\rx^{\al}_{2^{n-{ 1}}-1}$$ satisfying
$$
-1=\sum_ip_i^2+\sum_jv_j^*A'v_j.
$$
Let $q:=\sum_j v_j^*A_nv_j\in\rx_{2^{n}-2}$.
Then
\beq\label{eq:ugly1}\begin{split}
-1&=2\sum_ip_i^2+2\sum_jv_j^*A'v_j+1\\
&=2\sum_ip_i^2+2\sum_jv_j^*Av_j-2q{\tt x}_n+1\\
& = 2\sum_ip_i^2+2 \sum_jv_j^*Av_j + (1-q{\tt x}_n)^2+q^2(-{\tt x}_n^2).
\end{split}\eeq
Since $\deg q\leq 2^{n}-2$, we have $q^2(-{\tt x}_n)\in M_A^{(2^{n}-1)}$
by \eqref{eq:ugly0}.
Taken together with $(1-q{\tt x}_n)^2\in  M_A^{(2^{n}-1)}$,
\eqref{eq:ugly1} implies $-1\in M_A^{(2^{n}-1)}$.
\end{proof}

\begin{lemma}\label{lem:bounds2}
Let $A$ be an infeasible linear pencil of size $\al$. Then
\[
-1\in M_A^{(2^{\al-1}-1)}.
\]
\end{lemma}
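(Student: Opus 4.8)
The goal is to prove Lemma~\ref{lem:bounds2}, which sharpens Lemma~\ref{lem:bounds1} by replacing the bound $2^n-1$ with $2^{\al-1}-1$, where $\al$ is the matrix size. The idea is to run essentially the same induction as in Lemma~\ref{lem:bounds1}, but this time inducting on the \emph{size} $\al$ rather than on the number of variables $n$. The base case $\al=1$ is trivial: a linear pencil of size $1$ is a single linear polynomial $\ell$, and infeasibility $S_A=\emptyset$ means $\ell<0$ everywhere, which forces $\ell$ to be a negative constant, so $-1\in\R\nn=M_A^{(0)}=M_A^{(2^0-1)}$.

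\textbf{The induction step.} Suppose the claim holds for all infeasible pencils of size $\al-1$. Let $A$ be an infeasible pencil of size $\al$. If there is $u\in\R^\al\setminus\{0\}$ with $Au=0$, we delete the corresponding row and column to reduce to a smaller pencil with the same $M_A^{(k)}$ and still empty $S_A$, so we may assume no such $u$ exists; in particular the pencil is weakly infeasible or infeasible-but-not-feasible (it is certainly not feasible). By Lemma~\ref{kaffeebohne07} applied either to $A$ directly (weakly infeasible case) there is $k\ge1$ and $u_1,\dots,u_k\in\R^\al\setminus\{0\}$ with $\sum_{i=1}^ku_i^*Au_i=0$. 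As in the proof of Proposition~\ref{exsos}, Case~1, we arrange by an orthogonal change of coordinates on $\R^\al$ that some $u_i=e_1$ and $u_1^*Au_1=0$, i.e.\ $\ell_{11}=0$; and since $Ae_1\ne0$ we get some off-diagonal entry $\ell:=\ell_{1j}\ne0$, which after a further orthogonal rotation in the remaining coordinates we may take to be $\ell_{12}$. Exactly as in \eqref{eq:ugly0}, setting $\ell':=\tfrac12(-1-\ell_{22})$, $v:=[\ell'\ \ell_{12}\ 0\dots0]^*$, $S:=vv^*$ gives $\tr(AS)=-\ell_{12}^2\in M_A^{(1)}$. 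Now $\ell_{12}$ is a nonzero linear polynomial. The key point for the size-induction is: conjugating $A$ by a suitable orthogonal matrix and passing to a principal submatrix, I want to split off the first row/column and land on a pencil $A'$ of size $\al-1$ to which I can apply the induction hypothesis, paying only a bounded degree price coming from the substitution $\ell_{12}\mapsto$ something. Concretely, after an affine change of the $\x$-variables one may assume $\ell_{12}={\tt x}_n$ (if $\ell_{12}$ is not already a nonzero constant, in which case we are immediately done). One then writes $A$ in block form with first row/column $(0,{\tt x}_n,*,\dots,*)$ and does a Schur-complement-type manipulation modulo ${\tt x}_n$ to produce the smaller pencil $A'$ in one fewer \emph{row/column}, while controlling degrees; the induction hypothesis gives $-1\in M_{A'}^{(2^{\al-2}-1)}$, and lifting back through the relation $-{\tt x}_n^2\in M_A^{(1)}$ (as in \eqref{eq:ugly1}) doubles the degree bound to $2\cdot(2^{\al-2}-1)+1 = 2^{\al-1}-1$.

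\textbf{Main obstacle.} The delicate point is making the reduction from size $\al$ to size $\al-1$ while keeping infeasibility and while keeping the degree bookkeeping tight. In Lemma~\ref{lem:bounds1} the reduction killed one \emph{variable} (setting ${\tt x}_n=0$), which is clean because restricting an infeasible pencil to a coordinate subspace keeps it infeasible. Here one must instead eliminate one \emph{matrix dimension}, and it is not obvious that the natural $(\al-1)\times(\al-1)$ pencil obtained from $A$ (e.g.\ a Schur complement, or a compression to the orthogonal complement of a suitable vector) is again infeasible and has linear entries. I expect the cleanest route is: use the coordinate change $\ell_{12}={\tt x}_n$ so that the relation $-{\tt x}_n^2\in M_A^{(1)}$ holds, then substitute ${\tt x}_n=0$ to get an \emph{infeasible} pencil $A'':=A(\x_1,\dots,\x_{n-1},0)$ of the same size $\al$ whose $(1,2)$-entry now vanishes — hence whose first row and column can be compressed away after one more rotation, yielding an honest $(\al-1)\times(\al-1)$ infeasible linear pencil $A'$; apply the induction hypothesis to $A'$, then lift the certificate first from $A'$ to $A''$ (trivial, same entries) and then from $A''$ back to $A$ using $-{\tt x}_n^2\in M_A^{(1)}$ together with the identity $-1 = 2(\text{cert for }A'') + (1-q{\tt x}_n)^2 + q^2(-{\tt x}_n^2)$ as in \eqref{eq:ugly1}, with $q$ of degree $\le 2^{\al-1}-2$ so that $q^2(-{\tt x}_n)\in M_A^{(2^{\al-1}-1)}$. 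Verifying that the compression step genuinely produces a \emph{linear} pencil of size $\al-1$ that is still infeasible — rather than merely a quadratic matrix polynomial, or a feasible one — is the one place where care is needed; everything else is a routine degree count. Finally, Theorem~\ref{thm:bounds} follows by taking the better of the two bounds from Lemma~\ref{lem:bounds1} and Lemma~\ref{lem:bounds2}.
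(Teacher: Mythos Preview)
Your overall strategy (induct on $\al$, reduce to a pencil of size $\al-1$, lift back via an identity like \eqref{eq:ugly1}) matches the paper's, but the crucial reduction step is not correct as written. After arranging $\ell_{11}=0$ and $\ell_{12}={\tt x}_n$, you substitute ${\tt x}_n=0$ and obtain $A''$ with $(1,1)$- and $(1,2)$-entries zero, and then assert that ``one more rotation'' lets you compress away the entire first row and column. This is false: the remaining first-row entries $\ell_{13}(\x',0),\dots,\ell_{1\al}(\x',0)$ need not vanish, and no orthogonal change of basis on $\R^\al$ will zero them out. So you never produce an honest $(\al-1)\times(\al-1)$ linear pencil to which the induction hypothesis applies. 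You also omit the case where no single $u$ satisfies $u^*Au=0$ (only $\sum_iu_i^*Au_i=0$ with $k\ge2$), and you do not dispose of the strongly infeasible case (for which Lemma~\ref{kaffeebohne07} does not apply; one uses Lemma~\ref{sturm} directly).

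The missing idea is that \emph{all} first-row entries $b_0,b_1,\dots,b_{\al-1}$ satisfy $-b_j^2\in M_A^{(1)}$, not just $\ell_{12}$. The paper establishes this (treating separately the case $\ell_{11}=0$ and the case $\ell_{11}\in C_A\cap(-C_A)\setminus\{0\}$, where one arranges $\ell_{11}={\tt x}_1$ and uses $-4{\tt x}_1^2=(1-{\tt x}_1)^2{\tt x}_1+(1+{\tt x}_1)^2(-{\tt x}_1)$). One then performs an affine change so that the common zero set of the $b_j$ is $\{0\}^r\times\R^{n-r}$ and the variables ${\tt x}_1,\dots,{\tt x}_r$ occur among the $b_j$. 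Substituting ${\tt x}_1=\cdots={\tt x}_r=0$ kills the \emph{entire} first row and column simultaneously, so the bottom-right block $A'(0,\dots,0,{\tt x}_{r+1},\dots,{\tt x}_n)$ really is an infeasible linear pencil of size $\al-1$. The lift back to $A$ handles all $r$ variables at once via a sum $\sum_{t=1}^r\big((\tfrac1{\sqrt r}-\sqrt r q_t{\tt x}_t)^2+rq_t^2(-{\tt x}_t^2)\big)$, so the degree still only doubles once, giving the bound $2^{\al-1}-1$. There is also a separate easy sub-case when the linear system $b_0=b_1=\cdots=b_{\al-1}=0$ is itself infeasible, where one gets $-1\in M_A^{(1)}$ directly from $\sum_j\ga_jb_j=1$ and $-b_j^2\in M_A^{(1)}$.
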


\begin{proof}
We prove this by induction on $\al$. The statement is
clear for $\al=1$. 
Given $$A=A_0+\sum_{i=1}^n {\tt x}_iA_i\in S\R\cx^{\al\times \al}_1$$ 
of size $\al\ge2$,
we assume the statement has been
established for all infeasible linear pencils
 of size $\al-1$.
If $A$ is strongly infeasible, then $-1\in C_A=M_A^{(0)}$
by Lemma \ref{sturm}. So we may assume $A$ is weakly infeasible.

\smallskip
{\bf Claim.} 
There is an affine linear change of variables after which
$A$ assumes the form
\bes
A= \begin{bmatrix} b_0 & b^* \\ b & A'
\end{bmatrix},
\ees
where
$b_0\in\R\cx_1$,
$b=\begin{bmatrix} b_1 & \cdots & b_{\al-1}\end{bmatrix}^*\in\R\cx_1^{\al-1}$, 
$A'$ is a linear pencil of size $\al-1$, and $b_j\in\R\cx_1$ satisfy
\beq\label{eq:bj}
-b_j^2\in M_A^{(1)} \quad \text{for }j=0,\ldots,\al-1.
\eeq
Furthermore, $b_0$ can be chosen to be either $0$ or ${\tt x}_1$.

\emph{Explanation.}
By Lemma \ref{kaffeebohne07},
there is $k\in\N$ and $u_1,\ldots,u_k\in \R^\al\setminus\{0\}$ with
$\sum_{i=1}^k u_i^*Au_i=0$. We distinguish two cases.

\smallskip
{\bf Case 1.} There is $u\in\R^\al\setminus\{0\}$ with $u^*Au=0$.

\noindent
Write $A=(\ell_{ij})_{1\le i,j\le \al}$. By changing coordinates on $\R^\al$, we can assume that $u$ is the
first standard basis vector, i.e., $\ell_{11}=0$. 
Hence
$$
A=\begin{bmatrix}
0& b^* \\
b & A'
\end{bmatrix},
$$
where $b=\begin{bmatrix} b_1 & \cdots & b_{\al-1}\end{bmatrix}^*\in\R\cx_1^{\al-1}$ and 
$A'$ is a linear pencil of size $\al-1$.
As in the proof of Lemma \ref{lem:bounds1},
we deduce that
$-b_j^2\in M_A^{(1)}$ for all $j=1,\ldots,\al-1$.

\smallskip
{\bf Case 2.}
Case 1 does not apply but there are $k\ge 2$ and $u_1,\dots,u_k\in\R^\al\setminus\{0\}$ such that $\sum_{i=1}^ku_i^*Au_i=0$.

\noindent
In this case, $$\ell_{11}:=u_1^*Au_1=-\sum_{i=2}^ku_i^*Au_i\in C_A\cap-C_A=
M_A^{(0)}\cap-M_A^{(0)}.$$
Since Case 1 does not apply, $\ell_{11}\neq 0$. Furthermore, since $A$
is assumed to be weakly infeasible, $\ell_{11}\not\in\R$.
Hence
after an affine linear change of variables on $\R^n$, we
can assume $\ell_{11}={\tt x}_1$.
Thus
\bes
A=\begin{bmatrix}
{\tt x}_1& b^* \\
b & A'
\end{bmatrix},
\ees
where $b=\begin{bmatrix} b_1 & \cdots & b_{\al-1}\end{bmatrix}^*\in\R\cx_1^{\al-1}$ and 
$A'$ is a linear pencil of size $\al-1$.
Note that 
\[
-4 {\tt x}_1^2 =  (1-{\tt x}_1)^2 {\tt x}_1 +  (1+{\tt x}_1)^2 (-{\tt x}_1)
\]
shows that $-{\tt x}_1^2\in M_L^{(1)}$.
Using this, one gets similarly as above that also each of the entries $b_j$ of
$b$ satisfies $-b_j^2\in M_A^{(1)}$.
This proves our claim. \eop

\smallskip
If one of the $b_j\in\R\setminus\{0\}$, we are done
by \eqref{eq:bj}.
Otherwise we consider two cases.

\smallskip
{\bf Case a.}
The linear system $b_0(x)=0,\ b(x)=0$ is infeasible.

\noindent 
Then we proceed as follows.
There are $\ga_0,\ldots,\ga_{\al-1}\in\R$ satisfying
\beq\label{eq:fart}
\sum_{j=0}^{\al-1} \ga_j b_j = 1.
\eeq
For each $j=0,\ldots,\al-1$ and $\de\in\R$ we have
\bes
1+\de b_j = \Big( 1+\frac {\de}2 b_j\Big)^2 +
\frac{\de^2}4 (-b_j^2) \in M_A^{(1)}
\ees
by \eqref{eq:bj}.
Hence \eqref{eq:fart} implies
\[
-1 = 1-2 = 1- 2\sum_{j=0}^{\al-1} \ga_j b_j = 
\sum_{j=0}^{\al-1} \Big( \frac 1{\al} - 2\ga_jb_j\Big) \in M_L^{(1)}.
\]

\smallskip
{\bf Case b.}
The linear system $b_0(x)=b(x)=0$ is feasible.

\noindent 
Then we perform an affine linear change of variables on $\R^n$ to ensure
\[
\{x\in\R^n\mid b_0(x)=0,\ b(x)=0\} = \{0\}^r \times \R^{n-r}
\]
for some $r\in\N$.
Moreover, we may assume ${\tt x}_{1},\ldots,{\tt x}_{r}$ are among the
entries $b_j$, $j=0,\ldots,\al-1$.

Now $A'':=A'(0,\ldots,0,{\tt x}_{r+1},\ldots, {\tt x}_n)$ is an infeasible
linear pencil of size $\al-1$. By our induction hypothesis,
$-1\in M_{A''}^{(2^{\al-2}-1)}$.
In particular, there are $s\in\Si^2\cap\R\cx$ with $\deg s\leq 2^{\al-1}-2$, and
$v_i\in\rx^{\al-1}_{2^{\al-2}-1}$ satisfying
$$
-1=s+\sum_i v_i^*A'' v_i.
$$
Introducing
\[
q_t:=\sum_i v_i^*A_tv_i \in\rx_{2^{\al-1}-2}\qquad\text{and}\qquad w_i:=\begin{bmatrix}0\\v_i\end{bmatrix}\in\rx^\al_{2^{\al-2}-1}
\]
we have
\beq\label{eq:ugly3c}
\begin{split}
-1 &=\big(2s+2\sum_iv_i^*A''v_i\big)+1\\
&=\big( 2s + 2 \sum_iv_i^*A'v_i-
\sum_{t=1}^r 2q_t{\tt x}_t \big)+
\sum_{t=1}^r 
\Big( \big( \frac1{\sqrt r}-\sqrt rq_t{\tt x}_t\big)^2+2q_t{\tt x}_t+r q_t^2(-{\tt x}_t^2)\Big) \\
& = 2s + 2 \sum_{i}  w_i^*A w_i + \sum_{t=1}^r\big(\frac1{\sqrt r}-\sqrt rq_t{\tt x}_t\big)^2+\sum_{t=1}^rrq_t^2(-{\tt x}_t^2).
\end{split}\eeq
Combining 
$q_t^2(-{\tt x}_t^2)\in M_A^{(2^{\al-1}-1)}$
with $(\frac1{\sqrt r}-\sqrt rq_t{\tt x}_t)^2\in  M_A^{(2^{\al-1}-1)}$,
\eqref{eq:ugly3c} implies $-1\in M_A^{(2^{\al-1}-1)}$.
\end{proof}

\begin{proof}[Proof of Theorem {\rm\ref{thm:bounds}}]
Immediate from Lemma \ref{lem:bounds1} and Lemma \ref{lem:bounds2}.
\end{proof}

\begin{remark}
With the aid of truncated quadratic modules  associated to a linear pencil
$A$, we can introduce a hierarchy of infeasibility: $A$ is called \emph{$k$-infeasible} for $k\in\N_0$, if
$-1\in M_A^{(k)}$ and $-1\not\in M_A^{(k-1)}$. By Lemma \ref{sturm}, $A$ is strongly infeasible if and only if it is 
$0$-infeasible, and $A$ is weakly infeasible if and only if it is $k$-infeasible for some $k\in\N$.
Detecting $k$-infeasibility can be implemented as an SDP, cf.~Subsection \ref{subs:gram}.
\end{remark}

In \cite{hl06} Henrion and Lasserre extend
Lasserre's hierarchy \cite{las01} for optimizing over
scalar polynomial inequalities to polynomial matrix inequalities (PMIs).
Motivated by problems of systems control theory,
the authors of \cite{hl06} develop the primal-moment/dual-sos approach for (non-convex) PMIs,
a particular case of which are the (convex) LMIs treated here.
Our Theorem \ref{thm:bounds} shows that 
for infeasible LMIs, the SDP hierarchy described by Henrion and Lasserre
in \cite{hl06} generates a certificate of infeasibility at a \emph{finite}
relaxation order.

\subsection{Bounded spectrahedra}\label{subs:bs}

In this section we establish algebraic certificates
for  boundedness of a spectrahedron. As a corollary, we obtain a Putinar-Schm\"udgen-type
Positivstellensatz for polynomials positive on bounded spectrahedra, see Corollary
\ref{linpos}.

A quadratic module $M\subseteq\rx$ 
is said to be \textit{archimedean} if one of the following equivalent conditions holds:
\begin{enumerate}[(i)]
\item $\forall f\in\rx \ \exists N\in\N:\; N+f \in M$;
\item $\exists N\in\N:N\pm\sum_{i=1}^n{\tt x}_i^2\in M$;
\item $\forall i\in\{1,\dots,n\}\ \exists N\in\N:N\pm {\tt x}_i\in M$;
\end{enumerate}
see \cite[Corollary 5.2.4]{ma08}.

Obviously, if $M_A$ is archimedean for a linear pencil $A$, then
$S_A$ is bounded. In \cite{hkm} complete positivity (see e.g.~\cite{er00,bl04,pau02,pis03}) was used
to deduce that for \emph{strictly feasible} linear pencils the converse
holds. Subsequently, a certain generalization of this result for \emph{projections} of
spectrahedra has been proved by other techniques in \cite{gn}.
In this section we will establish the result for arbitrary bounded $S_A$. 
We deal separately with the relatively
easy case of non-empty $S_A$ (possibly with empty interior).

\begin{lemma}\label{lem:compArch}
Let $A$ be a linear pencil with $S_A\neq\emptyset$. Then
$$
S_A\text{ is bounded}\quad\iff\quad M_A\text{ is archimedean.}
$$
\end{lemma}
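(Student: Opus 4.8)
The implication ``$M_A$ archimedean $\Rightarrow S_A$ bounded'' is immediate, since condition (ii) of the archimedean property gives $N\in\N$ with $N-\sum_i\x_i^2\in M_A$, hence $N-\sum_i x_i^2\ge0$ for every $x\in S_A$, so $S_A$ is contained in a ball. The substance is the converse: assuming $S_A$ is non-empty and bounded, I must produce $N\in\N$ with $N-\sum_i\x_i^2\in M_A$. The idea is to reduce to the strictly feasible case, which is already handled in \cite{hkm}, by using the low-dimensionality machinery of Section~\ref{sec:sdp}.

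\textbf{Reduction to the strictly feasible case.}
First I would dispose of the case where $S_A$ has non-empty interior: then $A$ is strongly feasible (after noting that a bounded convex set with non-empty interior must contain a point where the pencil is strictly positive definite — one can perturb an interior point), and the result of \cite{hkm} applies directly to give that $M_A$ is archimedean. So assume $S_A$ has empty interior. By Proposition~\ref{exsos}, there is a non-zero $f\in\rx_1$ and a quadratic sos-matrix $S\in S\rx^{\al\times\al}_2$ with $-f^2=\tr(AS)$; in particular $f$ vanishes on $S_A$, and $-f^2\in M_A$. After an affine change of variables I may assume $f=\x_n$ (the case $f$ constant being impossible since then $S_A=\emptyset$), so $-\x_n^2\in M_A$ and $S_A\subseteq\{\x_n=0\}$. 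Now set $A':=A(\x_1,\dots,\x_{n-1},0)$, a linear pencil in $n-1$ variables with $S_{A'}=S_A$ bounded and non-empty. By induction on the number of variables, $M_{A'}$ is archimedean, so there is $N\in\N$ with $N-\sum_{i=1}^{n-1}\x_i^2\in M_{A'}$. Since $M_{A'}\subseteq M_A$ (every sos-polynomial and every $v^*A'v$ lies in $M_A$, as $A-A'$ is a multiple of $\x_n$ and... — this needs the congruence argument below), and since $-\x_n^2\in M_A$, I get $N-\sum_{i=1}^n\x_i^2=\big(N-\sum_{i=1}^{n-1}\x_i^2\big)+(-\x_n^2)\in M_A$, which is condition (ii). The base case $n=0$ is trivial.

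\textbf{The main obstacle.}
The delicate point is the inclusion-type step $M_{A'}\subseteq M_A$ modulo what $-\x_n^2\in M_A$ buys us. It is \emph{not} literally true that $M_{A'}\subseteq M_A$, because $A'=A-\x_nA_n$ and a term $v^*A'v$ differs from $v^*Av$ by $-\x_n\,v^*A_nv$, which need not lie in $M_A$. The fix is exactly the trick used in Lemma~\ref{lem:bounds1} and Lemma~\ref{lem:bounds2}: given a representation $N-\sum_{i<n}\x_i^2=s'+\sum_j v_j^*A'v_j$ with $s'$ sos and $v_j\in\rx^{\al}$, write $q:=\sum_j v_j^*A_nv_j$ and use
\[
2\sum_j v_j^*A'v_j \;=\; 2\sum_j v_j^*Av_j - 2q\x_n \;=\; 2\sum_j v_j^*Av_j + (1-q\x_n)^2 + q^2(-\x_n^2) - 1,
\]
so that $2\big(N-\sum_{i<n}\x_i^2\big)+1 = 2s' + 2\sum_j v_j^*Av_j + (1-q\x_n)^2 + q^2(-\x_n^2)$. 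Since $-\x_n^2\in M_A$ and $M_A$ is closed under multiplication by the square $q^2$, the right-hand side lies in $M_A$; hence $2N+1-2\sum_{i<n}\x_i^2\in M_A$, and adding $2(-\x_n^2)\in M_A$ gives $2N+1-2\sum_{i=1}^n\x_i^2\in M_A$, which scales (divide the representation, i.e.\ absorb $\tfrac12$ into the sos and sos-matrix parts) to the archimedean condition. I expect the only real care needed is bookkeeping: making sure the degree-unrestricted quadratic module $M_A$ (not a truncation) is used here so that multiplication by $q^2$ of arbitrary degree is permitted, and handling the reduction from a general vanishing linear $f$ to the coordinate $\x_n$ cleanly.
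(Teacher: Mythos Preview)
Your overall strategy is sound and yields a correct proof, but it is \emph{genuinely different} from the paper's argument.

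The paper does \emph{not} induct on $n$, nor does it invoke the strictly feasible case from \cite{hkm}. Instead it argues directly: enclose $S_A$ in an open ball $B$; for each $x\in\partial B$ choose $u_x$ with $u_x^*A(x)u_x<0$; by compactness extract finitely many $\ell_i:=u_i^*Au_i\in C_A$ so that the polyhedron $S=\{\ell_1\ge0,\dots,\ell_r\ge0\}$ is bounded and contains $S_A$. Classical Farkas' lemma, applied to the infeasible linear system $\{-N+\de x_k\ge0,\ell_1\ge0,\dots,\ell_r\ge0\}$, then gives $N\pm\x_k\in C_A$ for all $k$. So the paper's proof is self-contained (no reliance on \cite{hkm}) and in fact establishes the stronger conclusion that the archimedean witnesses lie already in $C_A$, not merely in $M_A$. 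Your approach, by contrast, recycles the dimension-reduction machinery of Section~\ref{sec:sdp} (Proposition~\ref{exsos} and the identity from Lemma~\ref{lem:bounds1}) and outsources the full-dimensional case to \cite{hkm}; this is economical given what the paper has already built, but less direct.

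There is one real gap in your write-up. The claim ``$S_A$ has non-empty interior $\Rightarrow$ $A$ is strongly feasible'' is false as stated: take $n=1$ and
\[
A=\begin{bmatrix}1-\x&0&0\\0&1+\x&0\\0&0&0\end{bmatrix},
\]
for which $S_A=[-1,1]$ has non-empty interior yet $A(x)$ is never positive definite. The fix is routine: if $S_A$ has non-empty interior but $A$ is only weakly feasible, Lemma~\ref{kaffeebohne07} provides $u_1,\dots,u_k\neq0$ with $\sum_iu_i^*Au_i=0$; on the open set $S_A$ each summand is nonnegative, hence each $u_i^*Au_i$ vanishes there and (being linear) vanishes identically, and then $A(x)\succeq0$ forces $Au_i=0$ as a polynomial. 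After an orthogonal change of basis on $\R^\al$ one strips the corresponding zero row and column from $A$, leaving $S_A$ and $M_A$ unchanged. Iterating, one reaches a strongly feasible pencil (the alternative, reaching the $0\times0$ pencil, would give $S_A=\R^n$, contradicting boundedness for $n\ge1$). With this patch, and the transfer trick you correctly identified, your induction goes through.
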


\begin{proof}
The direction $(\Leftarrow)$ is obvious as remarked above.
Let us consider the converse.
We first establish the existence of finitely many linear polynomials
in $C_A$ certifying the boundedness of $S_A$.

There is a ball $B\subseteq\R^n$ with $S_A\subseteq B$ and $S_A\cap\partial B=\emptyset$. For every $x\in\partial B$ there is a vector $u\in\R^n$ with 
\beq\label{eq:vecBord}
u^*A(x)u<0.
\eeq
 By continuity, \eqref{eq:vecBord} holds for all $x$ in a neighborhood
$U_x$ of $x$. From $\{U_x\mid x\in\partial B\}$ we extract by compactness a finite
subcovering 
$\{U_{x_1},\ldots, U_{x_r}\}$ of $\partial B$. Let $\ell_i:= u_i^*Au_i\in\rx_1$ and
$$
S:=\{x\in\R^n\mid \ell_1(x)\geq0,\ldots,\ell_r(x)\geq0\}.
$$
Clearly, $S_A\subseteq S$ and $S\cap \partial B=\emptyset$. Since $S_A$ is non-empty by hypothesis
and contained in $B$, it follows that $S$ contains a point of
$B$. But then it follows from the convexity of $S$ together with $S\cap \partial B=\emptyset$ that $S\subseteq B$. In particular, $S$ is bounded.

Now every $\ell_i\in C_A\subseteq M_A$. Hence the quadratic module $M$
generated by the $\ell_i$ is contained in $M_A$. 
Choose $N\in\N$ with $N\pm {\tt x}_i>0$ on $S$ for all $i$. 
Fix a $k$ and $\de\in\{-1,1\}$.
The system of linear
inequalities
$$
-N + \de x_k\geq 0, \ell_1(x)\geq0,\ldots,\ell_r(x)\geq0
$$
is infeasible. Hence by Farkas' lemma \cite{fa02}, 
there are $\al_j\in\R_{\geq0}$
satisfying
\beq\label{eq:far}
-1=\al_0(-N+\de {\tt x}_k)+\al_1\ell_1+\cdots+\al_r\ell_r.
\eeq
Note $\al_0\neq0$ since $S\neq\emptyset$. Rearranging terms
in \eqref{eq:far} yields $N-\de {\tt x}_k\in C_A$. Since $k$ and $\de$ were arbitrary and
$C_A\subseteq M_A$, we conclude that $M_A$ is archimedean.
\end{proof}

\begin{corollary}\label{cor:compArch}
Let $A$ be a linear pencil. 
Then
$$
S_A\text{ is bounded}\quad\iff\quad M_A\text{ is archimedean.}
$$
\end{corollary}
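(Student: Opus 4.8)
The plan is to split into two cases according to whether $S_A$ is empty, and to reduce each case to a result already at hand. Since every element of $M_A$ is nonnegative on $S_A$, the implication $(\Leftarrow)$ is harmless in general (if $M_A$ is archimedean, then $N\pm{\tt x}_i\in M_A$ for suitable $N\in\N$ confines $S_A$ to a box), and it is vacuous when $S_A=\emptyset$; so the real content is $(\Rightarrow)$ together with the empty case.

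First I would treat the case $S_A\neq\emptyset$: here the assertion is precisely the content of Lemma~\ref{lem:compArch}, so nothing remains to prove. It then remains to handle $S_A=\emptyset$. In that case $S_A$ is trivially bounded, so the only thing to verify is that $M_A$ is archimedean. By the nonlinear Farkas' lemma (Corollary~\ref{cor:infeasibleMa}), infeasibility of $A$ gives $-1\in M_A$. Since $1\in M_A$ by the definition of a quadratic module, we get $1\in\supp M_A$; as $\frac12\in\rx$, identity~\eqref{id4} shows that $\supp M_A$ is an ideal, and an ideal containing $1$ is all of $\rx$. Hence $M_A=\rx$, which is archimedean, since $1+f\in\rx=M_A$ for every $f\in\rx$. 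This finishes the corollary.

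I do not expect a genuine obstacle: the substantive work sits entirely in Lemma~\ref{lem:compArch} (a compactness argument combined with the classical Farkas' lemma), while the new (empty) case is a one-line consequence of Corollary~\ref{cor:infeasibleMa} (which in turn rests on Theorem~\ref{thm:bounds}, equivalently on Corollary~\ref{cor:almostsat}). So Corollary~\ref{cor:compArch} is simply the globalization of Lemma~\ref{lem:compArch}, obtained by absorbing the infeasible case through the quadratic-module version of Farkas' lemma.
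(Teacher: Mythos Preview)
Your proof is correct and follows exactly the paper's own argument: split into the cases $S_A\neq\emptyset$ (handled by Lemma~\ref{lem:compArch}) and $S_A=\emptyset$ (handled by Corollary~\ref{cor:infeasibleMa}). The paper's version is terser, simply saying ``$-1\in M_A$, so $M_A$ is archimedean,'' whereas you spell out that an improper quadratic module equals all of $\rx$; but this is just unpacking what the paper already noted after identity~\eqref{id4}.
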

              
\begin{proof}
If $S_A\neq\emptyset$, then this is Lemma \ref{lem:compArch}.
If $S_A=\emptyset$, then $-1\in M_A$ by Corollary \ref{cor:infeasibleMa},
so $M_A$ is archimedean.
\end{proof}

\begin{rem}
Note that the above corollary is a strong variant
of Schm\"udgen's characterization \cite{sm91} of bounded basic closed
semialgebraic sets as being exactly those whose describing
finitely generated preorderings are archimedean. Preorderings have
the tendency of being much larger than quadratic modules.
In general, a finitely generated quadratic module might describe a
bounded or even an empty set without being archimedean, see
\cite[Example 6.3.1]{pd01} and \cite[Example 7.3.2]{ma08}.
Corollary~\ref{cor:compArch} says that quadratic modules associated to linear pencils behave very well
in this respect.
\end{rem}

We conclude this section with a version of Putinar's Positivstellensatz \cite{pu93} for bounded spectrahedra:

\begin{cor}\label{linpos}
Let $A$ be a linear pencil and assume that $S_A$ is bounded.
If $f\in\R\cx$ satisfies $f|_{S_A}>0$, then $f\in M_A$.
\end{cor}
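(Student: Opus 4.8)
The statement is Putinar's Positivstellensatz applied to the quadratic module $M_A$, so the natural strategy is to invoke the archimedean property together with Corollary~\ref{cor:compArch} and then run the standard Putinar argument. First I would observe that since $S_A$ is bounded, Corollary~\ref{cor:compArch} tells us that $M_A$ is archimedean. Now I want to conclude $f\in M_A$ from $f|_{S_A}>0$. The classical route is: suppose $f\notin M_A$; since $M_A$ is a proper archimedean quadratic module (it is proper unless $S_A=\emptyset$, in which case $-1\in M_A=\R\cx$ and there is nothing to prove), by Zorn's lemma extend $M_A$ to a quadratic module $M$ maximal with respect to not containing $f$. One checks $M$ is prime in the sense that $\R\cx/\supp M$ is totally ordered by the image of $M$, and $M$ being archimedean gives a ring homomorphism $\varphi\colon\R\cx\to\R$ (evaluation at some point $x^\ast\in\R^n$) with $\varphi(M)\subseteq\R_{\ge0}$. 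Since every $u^\ast A u\in M_A\subseteq M$, we get $u^\ast A(x^\ast)u\ge0$ for all $u$, hence $x^\ast\in S_A$; but $f(x^\ast)=\varphi(f)\le0$, contradicting $f|_{S_A}>0$.

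Alternatively, and perhaps cleaner given what is already available, I would reduce to Corollary~\ref{cor:almostsat}-type statements only if $f$ were linear, but since $f$ is an arbitrary polynomial this does not directly apply. So the cleanest self-contained writeup is probably just to cite that $M_A$ is archimedean (via Corollary~\ref{cor:compArch}) and then quote Putinar's theorem \cite{pu93} (see also \cite[Theorem 5.6.1]{ma08}) in the form: if $M$ is an archimedean quadratic module in $\R\cx$ and $f>0$ on the set $\{x\mid g(x)\ge0 \text{ for all } g\in M\}$, then $f\in M$. Here the relevant set is exactly $S_A$, since $x\in S_A$ iff $u^\ast A(x)u\ge0$ for all $u\in\R^\al$ iff $g(x)\ge0$ for all generators $g=u^\ast Au$ of $M_A$, and these generators cut out $S_A$.

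The one genuine subtlety — and the step I expect to need the most care — is verifying that the ``nonnegativity set'' of the quadratic module $M_A$ really coincides with $S_A$, i.e. that $\{x\in\R^n\mid g(x)\ge0 \text{ for all } g\in M_A\}=S_A$. The inclusion $\subseteq$ needs that the linear polynomials $u^\ast Au$ for $u\in\R^\al$ force $A(x)\succeq0$, which is immediate from the definition of positive semidefiniteness. The inclusion $\supseteq$ is clear since every element of $M_A$ is a sum of squares plus $\tr(AS)$ with $S$ an sos-matrix, hence nonnegative on $S_A$. Once this identification is in place, Putinar's theorem applies verbatim. The boundedness hypothesis enters solely through Corollary~\ref{cor:compArch} to guarantee the archimedean condition, and the strict positivity $f|_{S_A}>0$ is exactly the hypothesis of Putinar's theorem.
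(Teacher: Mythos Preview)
Your proposal is correct and follows essentially the same route as the paper: invoke Corollary~\ref{cor:compArch} to conclude that $M_A$ is archimedean, and then apply Putinar's Positivstellensatz (the paper simply cites \cite[Theorem~5.6.1]{ma08}, noting that a ``slight generalization'' is needed since $M_A$ is not generated by finitely many polynomials). Your additional care in verifying that the nonnegativity set of $M_A$ equals $S_A$, and in handling the vacuous case $S_A=\emptyset$, is exactly what makes the citation legitimate.
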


\begin{proof}
By Corollary \ref{cor:compArch}, $M_A$ is archimedean. Now apply (a slight generalization of) Putinar's Positivstellensatz
\cite[Theorem 5.6.1]{ma08}.
\end{proof}

\begin{remark} Let $A$ be a linear pencil with bounded $S_A$.
\begin{enumerate}[(1)]
\item In the case $A$ is strongly feasible, Corollary \ref{linpos} has already been proved in 
\cite[\S 7]{hkm} by completely different techniques,
namely complete positivity from operator algebras. Note however that the more involved case in our approach occurs when $A$ is infeasible.
\item 
From Corollary \ref{cor:compArch} it is easy to see that the quadratic module \emph{in the sense of rings with involution} (see \cite{ks10}) associated to a
linear pencil
$A$ of size $\al$ in the ring of $\be\times\be$ matrix polynomials is archimedean (in the sense of \cite[Subsection 3.1]{ks10} or \cite[Sections 6, 7]{hkm}) if the
spectrahedron $S_A$ defined by $A$ is bounded (cf.~\cite[Section 7]{hkm}). 
Among other consequences, this implies a suitable generalization of
Corollary \ref{linpos} for \emph{matrix polynomials} positive definite on the bounded spectrahedron $S_A$ (cf.~\cite[Corollary 1]{sh06},
\cite[Theorem 3.7]{ks10} and \cite[Theorem 7.1]{hkm}).
\end{enumerate}
\end{remark}

\subsection{An application to positive linear functionals}
In this  brief subsection  we explain how our results pertain to positive
linear functionals.

\def\cR{\mathcal R}
\begin{definition}
Suppose $\cR\subseteq S\R^{\al\times\al}$ is a vector subspace, and let
$\cR_{\succeq0}:=\cR\cap S\R^{\al\times\al}_{\succeq0}$. A linear
functional $f:\cR\to\R$ is called \emph{positive} if $f(\cR_{\succeq0})\subseteq\R_{\geq0}$.
\end{definition}

Detecting positivity of a linear functional can be implemented with an SDP using
our Theorem \ref{sosram}. Let $\{A_1,\ldots,A_n\}$ be a basis for $\cR$, and introduce
the linear pencil
\[
A= A_1 {\tt x}_1+ \cdots +A_n {\tt x}_n \in S \R\cx^{\al\times\al}_1.
\]
To $f$ we associate the linear polynomial
\[
f_A = f(A_1) {\tt x}_1+ \cdots + f(A_n) {\tt x}_n \in \R\cx_1.
\]
Then $f$ is positive if and only if $f_A|_{S_A}\geq0$, and this is a situation completely
characterized by our Theorem \ref{indsos}. In turn, it can be implemented using
\hyperref[sdp:dsos]{${\rm (D^{\rm sos})} $}.

\begin{remark}
In subsequent work \cite{hkms} we shall  exploit how the ideas presented here
apply to operator algebras and complete positivity \cite{er00,bl04,pau02,pis03}, by using the results from 
\cite{hkm,hkm'}.
\end{remark}

\subsection{Examples}\label{subs:ex}
We conclude this paper with 
a series of examples pertaining to the theory developed.

\begin{example}\label{ex:ex}
The standard textbook example \cite{st00,wsv00} of a weakly infeasible linear pencil
seems to be
$$
A:= \begin{bmatrix} {\tt x} & 1 \\ 1 & 0\end{bmatrix}.
$$
Then $-1\not\in C_A$, but $-1\in M_A^{(1)}$.
Indeed,
for $u:=\begin{bmatrix}1 & -1-\frac {\tt x}2\end{bmatrix}^*,$ we have
$$-2=u^*Au.$$
Hence $A$ is $1$-infeasible.
\end{example}

\begin{example}\label{ex:sturm2}
Let 
$$
A:= \begin{bmatrix}0& {\tt x}_1 & 0 \\ {\tt x}_1 & {\tt x}_2&1\\ 0&1&{\tt x}_1\end{bmatrix}.
$$
Then $A$ is weakly infeasible and $-1\not\in M_A^{(1)}$.

Assume otherwise, and let 
\beq\label{eq:contr1}
-1=s+\sum_jv_j^*Av_j,
\eeq where
$v_j\in\rx_1^3$ and $s\in\Si^2\cap\rx_2$.
We shall carefully analyze the terms $v_j^*Av_j$. Write
$$v_j=\begin{bmatrix}q_{1j}&q_{2j}&q_{3j}\end{bmatrix}^*,
\quad \text{and}\quad q_{ij}=a_{ij}+b_{ij}{\tt x}_1+c_{ij}{\tt x}_2$$
with $a_{ij},b_{ij},c_{ij}\in\R$.
Then the ${\tt x}_2^3$ coefficient of $v_j^*Av_j$ equals
$c_{2j}^2$, so $c_{2j}=0$ for all $j$.
Next, by considering the ${\tt x}_1{\tt x}_2^2$ terms, we deduce $c_{3j}=0$.
Now the only terms possibly contributing to ${\tt x}_2^2$ come from $s$, so
$s\in\R[{\tt x}_1]_2$. The coefficient of ${\tt x}_2$ in $v_j^*Av_j$ is a square,
so $a_{2j}=0$. But now $v_j^*Av_j$ does not have a constant term
anymore, leading to a contradiction with \eqref{eq:contr1}. 

From Theorem \ref{thm:bounds} it follows that $-1\in M_A^{(3)}$. 
In fact, $-1\in M_A^{(2)}$ since
\[
-2 = u^* A u
\]
for 
$u= \begin{bmatrix}
\frac{1}{2}+ \frac{{\tt x}_2}{2}+\frac{{\tt x}_2^2}{8} & -1 & 1+\frac
   {{\tt x}_2}{2}
\end{bmatrix}^*\in \R\cx_2^3$.
Thus $A$ is $2$-infeasible.
\end{example}

\begin{example}\label{ex:new}
Let
\bes
\begin{split}
A & =\begin{bmatrix} 0 & {\tt x}_1 & 0 \\ {\tt x}_1 & {\tt x}_2 & {\tt x}_3\\0 & {\tt x}_3 & {\tt x}_1\end{bmatrix}.
\end{split}
\ees 
Then $S_{A}=\{ (0,x_2,0)\in\R^3 \mid x_2\geq0\}$ and the (affine) hyperplanes containing $S_A$ are $\{x_1=0\}$ and $\{ax_1+x_3=0\}$ ($a\in\R$).
As is shown in Case 1 of the proof of Proposition \ref{exsos}, the certificate of low-dimensionality \eqref{eq:lowdim} exists for the hyperplane
$\{x_1=0\}$, i.e., there is a quadratic sos-matrix $S$ such that $-{\tt x}_1^2=\tr(AS)$. However, none of the other hyperplanes containing $S_A$ allow
for a certificate of the form \eqref{eq:lowdim}.

Indeed, assume that there is $a\in\R$ such that $\{ax_1+x_3=0\}$ has a corresponding certificate. Combining it with the one
for $\{x_1=0\}$, we get a quadratic sos-matrix $S$ such that
$$-(2a^2){\tt x}_1^2-2(a{\tt x}_1+{\tt x}_3)^2=\tr(AS)$$
which implies
$$-{\tt x}_3^2=(2a{\tt x}_1+{\tt x}_3)^2+(-(2a^2){\tt x}_1^2-2(a{\tt x}_1+{\tt x}_3)^2)\in M_A^{(1)}.$$
Specializing ${\tt x}_3$ to $1$, one gets the contradiction $-1\in M_{A'}^{(1)}$ where $A'$ is the linear pencil from
Example~\ref{ex:sturm2}.
\end{example}

The next example gives a more explicit presentation of our dual \hyperref[sdp:dsos]{${\rm (D^{\rm sos})} $} 
applied to a classical example of an SDP with nonzero duality gap.

\begin{example}\label{ex:wesuck}
Consider
\[
A=
\begin{bmatrix} \al+ {\tt x}_2 & 0 & 0 \\ 
0 & {\tt x}_1 & {\tt x}_2 \\
0 & {\tt x}_2 & 0\end{bmatrix} =: A_0 + {\tt x}_1 A_1 + {\tt x}_2 A_2,
\]
where $\al\in\R_{>0}$ and $A_j\in S\R^{3\times3}$, and let $\ell={\tt x}_2$.
If $x\in\R^2$ satisfies $A(x)\succeq0$, then $x_2=0$. Thus 
the SDP \hyperref[sdp:p]{(P)} associated to $A$ and $\ell$ 
\beq\label{eq:ramp}
\begin{split}
\min \quad &\;  x_2 \\
\text{s.t. }\; & A(x) \succeq0
\end{split}
\eeq
has optimal value $0$. Its standard dual \hyperref[sdp:d]{(D)} is
\beq\label{eq:ramstandd}
\begin{split}
\max \quad   a \\
\text{s.t. }  -a & = \tr (A_0 S) \\
0 & = \tr(A_1 S) \\
1 & = \tr(A_2 S) \\
 S & \succeq0.
\end{split}
\eeq
From the second equation we see $S_{2,2}=0$ and thus (since $S\succeq0$),
$S_{2,j}=S_{j,2}=0$ for all $j$. Now the last equation implies $S_{1,1}=1$, and thus
$-a=\al$ by the first equation. Hence the optimal value of \eqref{eq:ramstandd} 
is $-\al$, yielding a duality gap of
$\al>0$.

We next present our sums of squares dual  \hyperref[sdp:dsos]{${\rm (D^{\rm sos})} $} 
for \eqref{eq:ramp}.

\begin{align}
\nonumber
\max\quad&a\\ 
\nonumber
\text{s.t.}\quad&S\in S\R^{3\times3}_{\succeq0},  \ a\in\R\\
\label{eq:28}
&S_1,S_2\in S\rx^{3\times3}_2\text{ quadratic sos-matrices}\\
\nonumber
&U_1,U_2\in S\R^{3\times 3}, \; W_1,W_2\in\R^{6\times3}\\
\label{eq:29}
&\vecx1^*U_1\vecx1+\tr(AS_1)=0\\
\label{eq:30}
&U_1\succeq W_1^*W_1\\
\label{eq:31}
&\vecx1^*U_2\vecx1+\vecx2^*W_1\vecx1+\tr(AS_2)=0\\
\label{eq:32}
&U_2\succeq W_2^*W_2\\
\label{eq:33}
&\ell-a+\vecx2^*W_2\vecx1-\tr(AS)=0.
\end{align}
To express \eqref{eq:28} using LMI constraints, note that  
\beq\label{eq:quadsos}
D+\sum_{k=1}^2 {\tt x}_k E_k + \sum_{1\leq k\leq\ell\leq2} {\tt x}_k {\tt x}_{\ell} F_{k,\ell}
\in S\rx^{3\times3}_2 
\eeq
is an sos-matrix iff there are 
$3\times 3$ matrices $\Xi_1,\Xi_2$ and $\Phi_{1,2}$ 
satisfying 
\beq\label{eq:quadsossdp}
\begin{split}
\begin{bmatrix} D & \Xi_1 & \Xi_2 \\
\Xi_1^* & F_{1,1} & \Phi_{1,2} \\
\Xi_2^* & \Phi_{1,2}^* & F_{2,2}
\end{bmatrix} & \succeq0\\
\Xi_k+\Xi_k^* & =E_k, \quad k=1,2 \\
\Phi_{1,2}+\Phi_{1,2}^* & =F_{1,2}.
\end{split}
\eeq
Let us use the notation of \eqref{eq:quadsos} for the quadratic sos-matrix $S_1$,
and consider the left hand side of \eqref{eq:29}.  Its constant coefficient is
$\tr(A_0 D) + (U_1)_{1,1}=0$. Since $A_0,D,U_1\succeq0$, this implies $(U_1)_{1,j}=(U_1)_{j,1}=0$ and
$D_{1,j}=D_{j,1}=0$.
Next, the ${\tt x}_1^3$ term of \eqref{eq:29} is $\tr(A_1F_{1,1})=0$, whence $(F_{1,1})_{2,j}=(F_{1,1})_{j,2}=0$. From \eqref{eq:quadsossdp} it follows that $\Xi_1$ is of the form 
$\left[\begin{smallmatrix} 0 & 0 & 0 \\ * & 0 & * \\ * & 0 & *\end{smallmatrix}\right]$. Hence
$(E_1)_{2,2}=0$. Finally, by considering the ${\tt x}_1^2$ term in \eqref{eq:29},
we obtain
$\tr(A_0 F_{1,1}) + \tr(A_1 E_1) + (U_1)_{2,2}=0$. Since $(E_1)_{2,2}=0$, $A_1E_1=0$. 
As $A_0,F_{1,1},U_1\succeq0$, we deduce $U_1=\left[\begin{smallmatrix} 
0 & 0 & 0 \\ 0 & 0 & 0 \\ 0 & 0 & u\end{smallmatrix}\right]$ for some $u\in\R_{\geq0}$.
In particular, from \eqref{eq:30} we see the first two columns of $W_1$ are $0$.
Hence $\vecx2^*W_1\vecx1\in {\tt x}_2 \rx$.

Using this information on $W_1$, we can analyze \eqref{eq:31} as in 
the previous paragraph, and deduce that the first two columns of $W_2$ are $0$.
Next, we turn to \eqref{eq:33}. All its terms of degree $\geq2$ come from $\vecx2^*W_2\vecx1$,
so $(W_2)_{j,k}=0$ for all $(j,k)\neq(1,3)$. This reduces \eqref{eq:33} to the system of linear equations
\[
\begin{split}
-a & = \tr(A_0S) \\
0 & = \tr(A_1 S)\\
1+(W_2)_{1,3} &= \tr(A_2 S).
\end{split}
\]
It is instructive to compare this to \eqref{eq:ramstandd} above. Again $S_{2,j}=S_{j,2}=0$ for all $j$.
Since $\tr(A_0S)\geq0$ and we are maximizing $a$, we set $S_{1,1}=0$, yielding
$(W_2)_{1,3}=-1$ and $a=0$. It is now easy to see that this $W_2$ and $a$ can be
extended to a feasible (and thus optimal) point for the above sums of squares dual.

Finally, we give Ramana's dual for \eqref{eq:ramp};
we refer to \cite[p.~142]{ra97} for details.
\beq\label{eq:ramramd}
\begin{split}
\max \quad   a \\
\text{s.t. }\quad  0 & = \tr (A_j U_1), \quad j=0,1,2 \\
U_1 & \succeq W_1W_1^* \\
0 &= \tr \big( A_j (U_2+W_1 ) \big), \quad j=0,1,2 \\
U_2 & \succeq W_2W_2^* \\
0 &= \tr \big( A_j (U+W_2 ) \big), \quad j=1,2 \\
-a &=  \tr \big( A_0 (U+W_2 ) \big) \\
 U & \succeq0.
\end{split}
\eeq
The reader will have no problems verifying that the optimal value of \eqref{eq:ramramd} is
$0$. 

\end{example}

Example \ref{ex:wesuck} demonstrates that Ramana's dual is generally smaller in size than the
sums of squares dual.
However, the advantage of 
our sums of squares dual is that it admits a nice real algebraic geometric interpretation,
and naturally leads itself to the Positivstellens\"atze we presented in Section
\ref{sec:pospoly}.

\subsection*{Acknowledgments}

The authors thank three anonymous referees for their detailed reading and many helpful comments.


\begin{thebibliography}{BEFB94}

\begin{spacing}{1.1}

\bibitem[Ba01]{ba01} A. Barvinok:
A remark on the rank of positive semidefinite matrices subject to affine constraints,
{\it Discrete Comput. Geom.} {\bf 25} (2001), no. 1, 23--31 

\bibitem[Ba02]{ba02} A. Barvinok:
{\it A course in convexity},
Graduate Studies in Mathematics {\bf 54}, 
Amer. Math. Soc., 2002

\bibitem[BCR98]{bcr98}
J. Bochnak, M. Coste, M.F. Roy:
 {\it  Real algebraic geometry},
 Springer-Verlag, 1998

\bibitem[BEFB94]{befb94}
S. Boyd, L. El Ghaoui, E. Feron, V. Balakrishnan:
{\it Linear matrix inequalities in system and control theory}, 
Studies in Applied Mathematics {\bf 15},
SIAM, 1994

\bibitem[BL04]{bl04}
D.P. Blecher and C. Le Merdy:
{\it Operator algebras and their modules---an operator space
  approach},  London Mathematical Society Monographs. New 
  Series {\bf 30},
Oxford University Press,  2004


\bibitem[Bo48]{bo48}
F. Bohnenblust:
Joint positiveness of matrices,
{\it Technical report,}
California Institute of Technology, 1948; available at\\ 
\resizebox{.86\hsize}{!}{\url{http://orion.uwaterloo.ca/~hwolkowi/henry/book/fronthandbk.d/Bohnenblust.pdf}}

\bibitem[BV96]{bv96}
S. Boyd, L. Vandenberghe:
Semidefinite programming,
{\it SIAM Rev.} {\bf 38}  (1996),  no. 1, 49--95
 
\bibitem[BW81]{bw81}
J.M. Borwein, H. Wolkowicz:
Facial reduction for a cone-convex programming problem,
{\it J. Austral. Math. Soc. Ser. A} {\bf 30}  (1980/81), no. 3, 369--380

\bibitem[Ce10]{ce10}
G. Chesi:
LMI techniques for optimization over polynomials in control: a survey,
{\it IEEE Trans. Automat. Control} {\bf 55}  (2010),  no. 11, 2500--2510

\bibitem[dK02]{dk02} E. de Klerk:
{\it Aspects of semidefinite programming, 
Interior point algorithms and selected applications}. 
Applied Optimization {\bf 65}, Kluwer Acad. Publ., 2002

\bibitem[Du07]{du07}
B. Dumitrescu:
{\em Positive trigonometric polynomials and signal processing applications},
Springer-Verlag, 2007

\bibitem[ER00]{er00}
E.G. Effros, Z.-J. Ruan: {\it Operator spaces}, 
London Mathematical Society Monographs. New Series {\bf 23}, Oxford University Press, 2000

\bibitem[Fa02]{fa02} J. Farkas:
Theorie der einfachen Ungleichungen,
{\it J. reine angew. Math.} {\bf 124} (1902), 1--27 

\bibitem[GLR82]{glr82} I. Gohberg, P. Lancaster, L. Rodman:
{\it Matrix polynomials},
Computer Science and Applied Mathematics,
Academic Press, 1982

\bibitem[GN11]{gn} J. Gouveia, T. Netzer:
Positive polynomials and projections of spectrahedra,
{\it SIAM J. Optim.} {\bf 21} (2011), no. 3, 960--976


\bibitem[Go97]{go97} M. Goemans:
Semidefinite programming in combinatorial optimization,
Lectures on mathematical programming,
{\it Math. Program.} {\bf 79} (1997), no. 1-3, Ser. B, 143–161

\bibitem[HKM]{hkm} J.W. Helton, I. Klep, S. McCullough:
The matricial relaxation of a linear matrix inequality,
to appear in {\it Math. Program.}
\url{http://arxiv.org/abs/1003.0908}

\bibitem[HKM12]{hkm'} J.W. Helton, I. Klep, S. McCullough:
The convex Positivstellensatz in a free algebra,
{\it Adv. Math.} {\bf 231} (2012) 516--534


\bibitem[HKMNS]{hkms} J.W. Helton, I. Klep, S. McCullough, C.Nelson, M. Schweighofer:
Complete positivity for nonunital operator systems and 
the hollow Positivstellensatz,
{\it in preparation}

\bibitem[HG05]{hg05}
D. Henrion, A. Garulli (editors):
{\it Positive polynomials in control}, Lecture Notes in Control and Information Sciences
{\bf 312}, Springer-Verlag, 2005

\bibitem[HL06]{hl06}
D. Henrion, J.B. Lasserre:
Convergent relaxations of polynomial matrix inequalities
and static output feedback,
{\it IEEE Trans. Autom. Control} {\bf 51} (2006), no. 2, 192--202

\bibitem[Ko03]{ko03} M. Kojima:
Sums of squares relaxations of polynomial semidefinite programs, 
{\it Technical report}, Tokyo Institute of Technology, 2003

\bibitem[KS10]{ks10} I. Klep, M. Schweighofer:
Pure states, positive matrix polynomials and sums of Hermitian squares,
{\it Indiana Univ. Math. J.} {\bf 59} (2010), no. 3, 857--874


\bibitem[Las01]{las01}
J.B. Lasserre: 
Global optimization with polynomials and the problem of moments,
{\it SIAM J. Optim.} {\bf 11}  (2000/01),  no. 3, 796--817

\bibitem[Las10]{las10}
J.B. Lasserre:
{\it Moments, positive polynomials and their applications}, 
Optimization Series {\bf 1},
Imperial College Press, 2010

\bibitem[Lau09]{lau09}
M. Laurent:
Sums of squares, moment matrices and optimization over polynomials,
Emerging applications of algebraic geometry, 157–270, 
{\it IMA Vol. Math. Appl.} {\bf 149}, Springer, New York, 2009;
updated version available at\\
\url{http://homepages.cwi.nl/~monique/files/moment-ima-update-new.pdf}

\bibitem[Ma08]{ma08} M. Marshall:
{\it Positive polynomials and sums of squares},
Mathematical Surveys and Monographs {\bf 146},
Amer. Math. Soc., 2008

\bibitem[Ne07]{ne07} A. Nemirovski:
Advances in convex optimization: conic programming,
{\it International Congress of Mathematicians}, Vol. {\bf I}, 413--444, Eur. Math. Soc., 2007 

\bibitem[NN94]{nn94}
Y. Nesterov, A. Nemirovskii:
{\it Interior-point polynomial algorithms in convex programming},
Studies in Applied Mathematics {\bf 13},
SIAM, 1994

\bibitem[Par00]{par00}
P.A. Parrilo:
{\it Structured Semidefinite Programs and Semialgebraic Geometry Methods in Robustness and Optimization}, Ph.D.~thesis, 
California Institute of
Technology, 
2000

\bibitem[Pat00]{Pat}
G. Pataki: 
A Simple Derivation of a Facial Reduction Algorithm, and Extended Dual Systems, 
{\it Technical report}, {\it Columbia University}, 2000\\
\url{http://www.unc.edu/~pataki/papers/fr.pdf}

\bibitem[Pat]{pat}
G. Pataki: 
Bad semidefinite programs: they all look the same, 
{\it preprint}\\
\url{http://arxiv.org/abs/1112.1436}

\bibitem[Pau02]{pau02}
V. Paulsen:
{\it Completely bounded maps and operator algebras},
Studies in Advanced Mathematics {\bf 78},
Cambridge University Press, 2002

\bibitem[PD01]{pd01} A. Prestel, C.N. Delzell:
{\it Positive polynomials. From Hilbert's 17th problem to real algebra},
Springer-Verlag, 2001

\bibitem[Pis03]{pis03}
G. Pisier:
{\it Introduction to operator space theory},
  London Mathematical Society Lecture Note Series {\bf 294},
Cambridge University Press, 2003


\bibitem[PK97]{pk97}
L. Porkolab, L. Khachiyan:
On the complexity of semidefinite programs,
{\it J. Global Optim.} {\bf 10} (1997), no. 4, 351–365

\bibitem[PS01]{ps01} V. Powers, C. Scheiderer:
The moment problem for non-compact semialgebraic sets,
{\it Adv. Geom.} {\bf 1} (2001), no. 1, 71--88 

\bibitem[Pu93]{pu93} M. Putinar:
Positive polynomials on compact semi-algebraic sets,
{\it Indiana Univ. Math. J.} {\bf 42} (1993), no. 3, 969--984

\bibitem[Ra97]{ra97} M. Ramana:
An exact duality theory for semidefinite programming and its complexity implications,
{\it Math. Program.} {\bf 77} (1997), no. 2, Ser. B, 129--162 

\bibitem[Sc09]{sc09} C. Scheiderer:
Positivity and sums of squares: a guide to recent results,
Emerging applications of algebraic geometry, 271--324, 
{\it IMA Vol. Math. Appl.} {\bf 149}, Springer-Verlag, 2009

\bibitem[SH06]{sh06} C.W. Scherer, C.W.J. Hol:
Matrix sum-of-squares relaxations for robust semi-definite programs,
{\it Math. Program.} {\bf 107} (2006), no. 1-2, Ser. B, 189--211

\bibitem[Sm91]{sm91} K. Schm\"udgen:
The $K$-moment problem for compact semi-algebraic sets,
{\it Math. Ann.} {\bf 289} (1991), no. 2, 203--206

\bibitem[Sr86]{sr86} A. Schrijver:
{\it Theory of linear and integer programming},
Wiley-Interscience Series in Discrete Mathematics,
John Wiley \& Sons, 1986

\bibitem[St00]{st00} J. Sturm:
Theory and algorithms of semidefinite programming,
High performance optimization, 1--194, 
{\it Appl. Optim.} {\bf 33}, Kluwer Acad. Publ., 2000

\bibitem[To01]{to01}
M.J. Todd:
Semidefinite optimization,
{\it Acta Numer.}  {\bf 10} (2001), 515–-560

\bibitem[TW]{tw}
L. Tuncel, H. Wolkowicz,
Strong Duality and Minimal Representations for Cone Optimization,
to appear in 
{\it Comput. Optim. Appl.}\\ \url{http://orion.math.uwaterloo.ca/~hwolkowi/henry/reports/regclosegap.pdf}

\bibitem[WSV00]{wsv00}
H. Wolkowicz, R. Saigal, L. Vandenberghe (editors):
{\it Handbook of semidefinite programming. Theory, algorithms, 
and applications},
Kluwer Acad. Publ., 2000

\bibitem[Za]{za}
A. Zalar:
A note on a matrix version of the Farkas lemma,
{\it Comm. Algebra}
{\bf 40} (2012) 3420--3429

\end{spacing}
\end{thebibliography}
\end{document}